\documentclass{amsart}
\usepackage{amsmath}
\usepackage{amsthm}
\usepackage{listings}
\usepackage{xcolor}
\usepackage
[final]
{showlabels}
\usepackage{amssymb}
\usepackage{tikz-cd}
\usepackage{algorithm2e}
\usepackage{enumitem}  
\usepackage{stmaryrd}

\usetikzlibrary{decorations.markings}
\tikzset{negated/.style={
        decoration={markings,
            mark= at position 0.5 with {
                \node[transform shape] (tempnode) {$\backslash$};
            }
        },
        postaction={decorate}
    }
}

\newtheorem{theorem}{Theorem}[section]
\newtheorem{proposition}[theorem]{Proposition}
\newtheorem{lemma}[theorem]{Lemma}
\newtheorem{corollary}[theorem]{Corollary}

\newtheorem{alphatheorem}{Theorem}

\theoremstyle{definition}

\newtheorem{remark}[theorem]{Remark}
\newtheorem{conjecture}[theorem]{Conjecture}

\setenumerate{label={\normalfont(\textit{\roman*})}, ref={\textrm{\roman*}},
wide, leftmargin=*, labelwidth=!, labelindent=0pt, 
}

\usepackage{ifthen}
\usepackage{scalerel}
\usepackage{hyperref}
\usepackage{mathrsfs}
\usepackage[mathscr]{euscript}
\usepackage{lineno}

\newcommand{\linspan}{\operatorname{span}}

\newcommand{\floor}[1]{\left\lfloor #1 \right\rfloor}
\newcommand{\bra}[1]{\left( #1 \right)}

\renewcommand{\tilde}{\widetilde}
\renewcommand{\bar}{\overline}

\newcommand{\abs}[1]{\left|#1\right|}
\newcommand{\set}[2]{\left\{ #1 \ \middle| \ #2 \right\} }

\newcommand{\NN}{\mathbb{N}}

\newcommand{\QQ}{\mathbb{Q}}
\newcommand{\ZZ}{\mathbb{Z}}
\newcommand{\RR}{\mathbb{R}}
\newcommand{\CC}{\mathbb{C}}

\newcommand{\cN}{\mathcal{N}}

\newcommand{\cM}{\mathcal{M}}

\newcommand{\mynote}[2]{\noindent
{\bfseries\sffamily\scriptsize#1}
{\small$\blacktriangleright$\textsf{\textsl{#2}}$\blacktriangleleft$}}
\newcommand\JK[1]{\mynote{JK}{{\color[rgb]{0.5,0.2,0.0} #1}}}

\definecolor{new}{RGB}{0,100,0}
\definecolor{old}{RGB}{220,220,220}
\definecolor{newnew}{RGB}{0,0,128}
\definecolor{good}{RGB}{122,111,0}
\definecolor{newgood}{RGB}{211, 75, 8}
\definecolor{final}{RGB}{128,0,0}

\renewcommand{\color}[1]{\texttt{[#1]}}

\renewcommand{\color}[1]{}
\renewcommand{\JK}[1]{}

\newcommand{\ind}{\mathbf{1}}
\newcommand{\sinv}{dilation-invariant}
\newcommand{\Sinv}{Dilation-invariant}
\newcommand{\pc}{pattern}
\newcommand{\Pc}{Pattern}

\renewcommand{\subset}{\subseteq}

\newcommand*\patchAmsMathEnvironmentForLineno[1]{\expandafter\let\csname old#1\expandafter\endcsname\csname #1\endcsname
  \expandafter\let\csname oldend#1\expandafter\endcsname\csname end#1\endcsname
  \renewenvironment{#1}{\linenomath\csname old#1\endcsname}{\csname oldend#1\endcsname\endlinenomath}}\newcommand*\patchBothAmsMathEnvironmentsForLineno[1]{\patchAmsMathEnvironmentForLineno{#1}\patchAmsMathEnvironmentForLineno{#1*}}\AtBeginDocument{\patchBothAmsMathEnvironmentsForLineno{equation}\patchBothAmsMathEnvironmentsForLineno{align}\patchBothAmsMathEnvironmentsForLineno{flalign}\patchBothAmsMathEnvironmentsForLineno{alignat}\patchBothAmsMathEnvironmentsForLineno{gather}\patchBothAmsMathEnvironmentsForLineno{multline}}

\begin{document}

\author[J.\ Konieczny]{Jakub Konieczny}
\address[J.\ Konieczny]{Camille Jordan Institute, 
Claude Bernard University Lyon 1,
43 Boulevard du 11 novembre 1918,
69622 Villeurbanne Cedex, France}
\address{Faculty of Mathematics and Computer Science, Jagiellonian University in Krak\'{o}w, \L{}ojasiewicza 6, 30-348 Krak\'{o}w, Poland\newline}
\email{jakub.konieczny@gmail.com}

\title[Noncorrelated pattern sequences]{Algorithmic classification of \\ noncorrelated binary pattern sequences}

\begin{abstract}\color{final}
	The main subject of this paper are binary \pc{} sequences, that is, sequences of the form $(-1)^{\#(n,A)}$ where $A$ is a set of strings of $\mathtt 0$s and $\mathtt 1$s, and $\#(n,A)$ denotes the total number of times patterns from $A$ appear in the binary expansion of $n$. A sequence is said to be noncorrelated if the corresponding spectral measure is equal to the Lebesgue measure.
	
	We show that it is possible to algorithmically verify if a given binary \pc{} sequence is noncorrelated. As an application, we compute that there are exactly $2272$  noncorrelated binary \pc{} sequences of length $\leq 4$. If we restrict our attention to patterns that do not end with $\mathtt{0}$, we put forward a sufficient condition for a \pc{} sequence to be noncorrelated. We conjecture that this condition is also necessary, and verify this conjecture for lengths $\leq 5$.
\end{abstract}

\keywords{}
\subjclass[2010]{Primary: 47B15; Secondary: 11B50}

\maketitle 

\newcommand{\M}[1]{\mathrm{M}_{#1}}
\newcommand{\Mlog}[1]{\mathrm{M}_{#1}^{\log}}
\newcommand{\gammalog}{\gamma^{\log}}

\section{Introduction}\label{sec:Intro}

\color{newgood}
Uniformity properties of sequences defined in terms of digital expansions have long been studied. Consider, for instance, the Thue--Morse sequence $t(n) = (-1)^{s_2(n)}$, where $s_2(n)$ denotes the sum of binary digits of $n$, discussed at length by Allouche and Shallit in the survey paper \cite{AlloucheShallit-1999}. It was shown by Gelfond \cite{Gelfond-1967} (see also \cite{MauduitSarkozy-1998}) that $t(n)$ is equidistributed in arithmetic progressions:
\begin{equation}\label{eq:870:1}
	\lim_{N \to \infty} \abs{\set{ 0 \leq n < N}{ t(An+B) = +1 }}/N = 1/2
\end{equation}
for all $A \in \NN$ and $B \in \NN_0$, and the rate of convergence can be made explicit. Analogous results hold also for other bases, with mild additional assumptions to account for the fact that $s_{k}(n) \equiv n \bmod{k-1}$. Mauduit and S{\'a}rk{\"o}zy \cite{MauduitSarkozy-1998} also observed that the Thue--Morse sequence admits large self-correlations. Here, the (self-)correlation coefficients of a sequence $a \colon \NN \to \CC$  are defined by
\begin{equation}\label{def:correlation}
\gamma_{a}(m) := \lim_{N \to \infty} \frac{1}{N} \sum_{n=0}^{N-1} a(n)\bar a(n+m),
\end{equation}
and a simple computation shows that $\gamma_{t}(1) = -1/3 \neq 0$ (see Section \ref{sec:Correlation} for details). By the same token,
\begin{equation}\label{eq:870:2}
	\gamma_t(2^{\ell}) = -1/3 \text{ for all } \ell \in \NN_0,
\end{equation}
meaning in particular that $\gamma(m) \not\to 0$ as $m \to \infty$. On the other hand, the coefficients $\gamma_t(m)$ tend to be rather small; in particular 
\begin{equation}\label{eq:870:21}
	\lim_{N\to\infty} \frac{1}{N} \sum_{m=0}^{N-1} \gamma_t(m)^2 = 0,
\end{equation}
which follows e.g.{} from results in \cite{Coquet-1976}. The spectral measure $\mu_a$ on $\RR/\ZZ$ associated to a sequence $a \colon \NN \to \CC$ is characterised by the identity $\int_{\RR/\ZZ} \exp(2\pi i m t) d\mu_a(t) = \gamma_a(m)$, and \eqref{eq:870:21} is equivalent to absolute continuity of $\mu_t$.

Many other notions of uniformity have been investigated for the Thue--Morse sequence. In an influential paper, Mauduit and Rivat showed that $t(n)$ and its analogues in different bases are equidistributed along the primes \cite{MauduitRivat-2010}. Drmota, Mauduit and Rivat \cite{DrmotaMauduitRivat-2019} showed that $t(n^2)$ is a normal sequence, meaning that each finite sequence of $\pm 1$s appears with the expected frequency. 
Spiegelhofer \cite{Spiegelhofer-2018} proved that $t(n)$ has level of distribution $1$, which is a far-reaching quantitative generalisation of \eqref{eq:870:1} and can be used to show equdistribution along Piatetski--Shapiro sequences $\floor{n^c}$, $1 < c < 2$ (see \cite{FouvryMauduit-1996} for analogous, but somewhat weaker, results in different bases).   
It was also shown by the author \cite{Konieczny-2019} that $t(n)$ has small Gowers norms, meaning that it is uniform from the point of view of higher order Fourier analysis. 

\color{newgood}
Another oft-studied sequence carries the name of Rudin--Shapiro and is given by $r(n) = (-1)^{\#(\mathtt{11},n)}$, where $\#(\mathtt{11},n)$ denotes the number of times the pattern $\mathtt{11}$ appears in the binary expansion of $n$, allowing overlaps. Similarly to the Thue--Morse sequence, the Rudin--Shapiro sequence is equidistributed in arithmetic progressions and along the primes \cite{MauduitRivat-2015}, and has small Gowers norms \cite{Konieczny-2019}. However, in contrast to \eqref{eq:870:2}, the Rudin--Shapiro sequence is \emph{noncorrelated}, by which we mean that $\gamma_r(m) = 0$ for all $m \geq 1$ or, equivalently, that the spectral measure $\mu_r$ is is the Lebesgue measure. Intuitively, noncorrelated sequences are free of any sort of periodic behaviour.	

The Thue--Morse and the Rudin--Shapiro sequences are special cases of what we call binary \pc{} sequences. In general, a binary \pc{} sequence takes the form 
\[
	a(n) = a_A(n) = (-1)^{\#(A,n)},
\]
where $A$ is a finite set of patterns over the alphabet $\{\mathtt 0, \mathtt 1\}$ and $\#(A,n)$ denotes the total number of appearances of patterns from $A$ in the binary expansion of $n$ (see Section \ref{sec:Background} for details). \Pc{} sequences were studied in a more general context by Morton and Mourant \cite{MortonMourant-1989,Morton-1990}, Coquet, Kamae and Mend\`{e}s France \cite{CoquetKamaeMendesFrance-1977}, and Boyd, Cook and Morton \cite{BoydCookMorton-1989}. Generalised Rudin--Shapiro sequences and their correlation coefficients were studied by Allouche and Liardet \cite{AlloucheLiardet-1991}. Finally, Zheng, Peng and Kamae \cite{ZhengPengKamae-2018} studied correlation coefficients of binary \pc{} sequences, and obtained a complete classification of noncorrelated sequences corresponding to sets of patterns of length $\leq 3$. Examples of sets $A$ that give rise to noncorrelated sequences include:
\begin{itemize}
\item $\{ \mathtt{11} \}$ (then $a_A(n) = r(n)$ is the Rudin--Shapiro sequence);
\item $\{ \mathtt{11}, \mathtt{1} \}$ (then $a_A(n) = r(n) t(n)$) and $\{ \mathtt{10}, \mathtt{1} \}$ (then $a_A(n) = (-1)^n r(n)$);
\item $\{ \mathtt{101}, \mathtt{111} \}$, or more generally $\{ \mathtt{101}, \mathtt{111} \} \cup B$ for a set $B \subset \{\mathtt{0},\mathtt{1}\}^2$.
\end{itemize}

In this paper, we extend the result of \cite{ZhengPengKamae-2018} to patterns of length $\leq 4$ and put the findings in a wider context provided by the theory of automatic and regular sequences. Many of the ideas we use have their analogues and prototypes in \cite{ZhengPengKamae-2018}; throughout the paper we give references to the relevant results therein.

Unfortunately, there does not appear to be a simple criterion that determines if a given \pc{} sequence is noncorrelated (except for the partial information suggested by Conjecture \ref{conj:NC->inv_x_per} below). Due to practical limitations we only state a counting result here, as opposed to a complete list.\footnote{The list, together with the code which can be used to produce it, is available from the author.}
\begin{alphatheorem}\label{thm:A}
	There are precisely $2272$ noncorrelated binary \pc{} sequences corresponding to patterns of length $\leq 4$. 
\end{alphatheorem}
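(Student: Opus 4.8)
The plan is to derive Theorem~\ref{thm:A} from the decidability of noncorrelation established in this paper, by organising the a~priori enormous search over sets of patterns into a manageable finite computation. There are $2+4+8+16 = 30$ binary patterns of length at most $4$, hence $2^{30}$ candidate sets $A$; this is both too large and hugely redundant, so the first task is to collapse it. The key structural fact is that $A \mapsto a_A$ is a homomorphism from $(\cP(\{\text{patterns of length}\le 4\}), \triangle)$ to the group of $\{\pm 1\}$-valued sequences under pointwise multiplication, since $\#(A \triangle B, n) \equiv \#(A,n) + \#(B,n) \pmod 2$. Equivalently, the functions $n \mapsto \#(P,n) \bmod 2$ span an $\mathbb{F}_2$-subspace $V$ of the space of $\{0,1\}$-valued sequences (and each is $2$-automatic), and the \pc{} sequences in question biject with $V$, so there are exactly $2^{d}$ of them with $d = \dim_{\mathbb{F}_2} V = 30 - \dim \ker$. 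I would compute $d$, a basis, and a transversal of $\ker$ by Gaussian elimination performed on the defining automata themselves (comparing finite prefixes could in principle miss a relation); here one uses that $\#(P,n) \bmod 2$ is computed by an explicit finite automaton reading the binary digits of $n$ whose state records the last $|P|-1$ digits and the parity of the running count. It helps to note the elementary identities $\sum_{|P| = k} \#(P,n) \equiv \sum_{|P| = k-1} \#(P,n) + 1$ and $\#(P\mathtt 0, n) + \#(P\mathtt 1, n) \equiv \#(P,n) + \braif{P\text{ is a suffix of }n}$, together with the analogue for prepending a digit: these show that every pattern of length $<4$, and every length-$4$ pattern ending in $\mathtt 0$, is expressible through length-$4$ patterns ending in $\mathtt 1$ plus eventually periodic correction terms, and---since correlations ignore changes on a set of density zero---one may pass to the quotient by eventual equality before enumerating.

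With a transversal $A_1, \dots, A_{2^d}$ of the distinct \pc{} sequences in hand, the second step is, for each $i$, to construct and minimise the finite automaton with output that computes $a_{A_i}$ (in whatever digit ordering the decision procedure expects), feed it to the noncorrelation algorithm, and increment a counter precisely when $\gamma_{a_{A_i}}(m) = 0$ for all $m \ge 1$. In practice one would first apply cheap necessary conditions for noncorrelation --- along the lines of the invariance discussed in Conjecture~\ref{conj:NC->inv_x_per}, or absolute continuity of the spectral measure --- to discard the overwhelming majority of cases, invoking the full decision procedure only on the survivors. Double counting is ruled out by verifying irredundancy of the transversal, e.g.\ by minimising the automata and comparing them (or by comparing sufficiently long prefixes together with an a~posteriori bound, in terms of automaton size, on where two distinct such sequences must first differ). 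Summing the counter yields the claimed value $2272$.

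The real difficulty here is scale, not concept: $\ker$ appears to be trivial, so $2^{d}$ is essentially $2^{30}$, and even a single fast pass over all cases is a substantial computation. \textbf{The practical bottleneck} is thus keeping both the transversal and each individual test small, for which I would (i) push the $\mathbb{F}_2$-reduction and the passage to eventual equality as far as they go, (ii) exploit the two obvious symmetries --- simultaneously swapping $\mathtt 0 \leftrightarrow \mathtt 1$ in all patterns, and reversing all patterns --- both of which commute with noncorrelation, so that only orbit representatives need be tested, and (iii) memoise the automaton constructions, as the $a_{A_i}$ are all pointwise products drawn from one fixed basis of automatic sequences. Granting feasibility, the correctness of the final count then rests on exactly three ingredients: the correctness of the decision procedure proved earlier in the paper, the completeness of the enumeration (guaranteed by the linear-algebra reduction), and the correctness of the deduplication --- with the independently runnable code mentioned in the footnote serving as the ultimate certificate.
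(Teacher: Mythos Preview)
Your overall strategy---enumerate the distinct \pc{} sequences and apply the decision procedure from Section~\ref{sec:Implementation} to each---is exactly what the paper does. But your analysis of the enumeration is off, and this leads you to invent machinery the paper does not need.

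The central slip is the claim that ``$\ker$ appears to be trivial, so $2^d$ is essentially $2^{30}$.'' It is not. First, the four all-zero words $\mathtt{0},\mathtt{00},\mathtt{000},\mathtt{0000}$ are not admissible (the count $\#(\mathtt{0}^j,n)$ is not well-defined), so you start from $26$ generators, not $30$. More importantly, Lemma~\ref{lem:PC-representation} computes the kernel explicitly: using the identity $\#(v,n) = \sum_{i \in \Sigma_2} \#(iv,n)$, every admissible set is equivalent (under $\oplus$) to a \emph{unique} set with no leading zeros, and the patterns of length $\leq 4$ beginning with $\mathtt{1}$ number $1+2+4+8 = 15$. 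Thus $d = 15$ and there are exactly $2^{15} = 32768$ distinct \pc{} sequences to test---entirely feasible for the $2^{O(\ell)}$ algorithm with $\ell = 4$. Your own paragraph about expressing short patterns and trailing-$\mathtt{0}$ patterns through others is groping towards this, but you then contradict it by declaring the kernel trivial.

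With the correct count, the elaborate reductions you propose (quotient by eventual equality, orbit representatives under $\mathtt{0}\leftrightarrow\mathtt{1}$ and pattern reversal, memoisation) are unnecessary, and some are unjustified: swapping $\mathtt{0}\leftrightarrow\mathtt{1}$ in the patterns does not correspond to any simple transformation of $n$ (leading zeros break the symmetry), so the assertion that it ``commutes with noncorrelation'' would itself require proof. The paper simply loops over the $2^{15}$ canonical representatives and counts.
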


As a key step towards obtaining the above result, we reduce the task of verifying whether a given binary \pc{} sequence is noncorrelated to a finite computation, which can then be automated. The time complexity of the resulting algorithm is polynomial in $2^\ell$, where $\ell$ denotes the length of patterns under consideration. Since it takes approximately $2^{\ell}$ bits to specify a binary \pc{} sequence, this is optimal up to improvements in the exponent. 

\begin{alphatheorem}\label{thm:B}
	There exists an algorithm which, given a finite set of patterns $A \subset \{\mathtt 0, \mathtt 1\}^\ell$,  performs $2^{O(\ell)}$ operations and decides if the corresponding \pc{} sequence $a_A$ is noncorrelated.
\end{alphatheorem}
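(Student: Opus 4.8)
The plan is to show that a binary pattern sequence $a_A$ is a \emph{regular sequence} in the sense of Allouche and Shallit, and that the whole orbit of its tails under the binary shift operations spans a finite-dimensional space; the correlation coefficients $\gamma_{a_A}(m)$ then become a computable linear-algebraic invariant of a finite-dimensional dynamical system, and noncorrelation becomes a decidable property of that system. Concretely, I would first pass from $a_A$ to the vector-valued sequence obtained by recording, for each state $q$ of the deterministic automaton that reads the binary expansion of $n$ (most significant digit first) and tracks the length-$(\ell-1)$ suffix of the digits read so far together with the running parity of $\#(A, \cdot)$, the value contributed. Since the relevant suffixes have length $<\ell$, there are at most $2^{O(\ell)}$ states, and the product $a_A(2n+j) = \pm a_{A,q}(n)$ for the appropriate successor state, so the family $\{a_{A,q}\}_q$ is $2$-regular with a kernel of dimension $2^{O(\ell)}$; all of this is effectively constructible from $A$ in time polynomial in $2^\ell$.

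Next I would express the correlations. For fixed $m$, consider the pair sequence $n \mapsto (a_A(n), a_A(n+m))$. Addition of a fixed constant $m$ to $n$, read digit by digit, is realised by a finite transducer with carry, and the joint behaviour of $(a_A(n), a_A(n+m))$ is governed by a finite automaton whose states record: the carry, the relevant length-$<\ell$ suffixes in both coordinates, and the two running parities. The number of such states is $2^{O(\ell)} \cdot (\text{something depending on } m)$; to get uniformity in $m$ one writes $m$ in base $2$ and observes that the transducer processes $m$ digit by digit as well, so the joint system for all $m$ simultaneously is again a single finite object of size $2^{O(\ell)}$. The Cesàro limit defining $\gamma_{a_A}(m)$ in \eqref{def:correlation} is then the stationary average of a weighted product over this finite Markov-type system; standard results on automatic/regular sequences (equidistribution of the digit-automaton under the uniform measure, as already used for Thue--Morse and Rudin--Shapiro above) guarantee the limit exists and equals the top eigenvalue component of an explicit $2^{O(\ell)} \times 2^{O(\ell)}$ nonnegative matrix product. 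Crucially, $\gamma_{a_A}(m)$ is, as a function of $m$, itself a regular sequence: it is a coordinate of $M_{c_r} M_{c_{r-1}} \cdots M_{c_0} v$ where $c_r \cdots c_0$ is the binary expansion of $m$, for a finite collection of matrices $\{M_0, M_1\}$ and a fixed vector $v$, all computable from $A$.

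The final step is decidability. We must decide whether $\gamma_{a_A}(m) = 0$ for every $m \geq 1$. Since $m \mapsto \gamma_{a_A}(m)$ is a $\QQ$-valued (in fact the entries are rationals with denominators dividing a fixed power of $2$ times a fixed integer) regular sequence taking values in a finitely generated module, the set of attained values is the set of coordinates of vectors in the semigroup generated by $M_0, M_1$ applied to $v$; this semigroup of vectors, though infinite, lies in a finitely generated $\ZZ[1/2]$-module of rank $2^{O(\ell)}$, and the condition ``all reachable vectors have vanishing target coordinate on inputs of length $\geq 1$'' is exactly a reachability question in a finite transition system once one quotients by the (effectively computable) linear relations — equivalently, one checks that the target coordinate vanishes on the span of $\{M_w v : w \in \{0,1\}^*, |w| \geq 1\}$, and that span stabilises after $2^{O(\ell)}$ steps, so finitely many matrix products suffice. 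Putting the pieces together yields the algorithm and the $2^{O(\ell)}$ bound.

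\textbf{Main obstacle.} The delicate point is making the joint system for $(a_A(n), a_A(n+m))$ \emph{uniform in $m$} while keeping its size $2^{O(\ell)}$: naively the carry propagation when adding $m$ interacts with the length-$\ell$ pattern windows in a way that could blow up the state space, and one must argue that only a bounded-length portion of the carry/suffix data is ever relevant — in particular that the contribution of $m$ to the automaton is itself read through a bounded-memory transducer. I also expect some care is needed to prove the Cesàro limit exists and to identify it with the claimed matrix-product expression: this requires the uniform measure on the digit automaton to be the unique stationary measure on the recurrent part, which is true for these pattern-counting automata but should be justified (e.g.\ via primitivity of the underlying $2$-to-$1$ transition structure) rather than assumed.
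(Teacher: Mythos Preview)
Your plan is essentially the paper's own approach: show that $m \mapsto \gamma_{a_A}(m)$ is $k$-regular with kernel spanning a space of dimension $2^{O(\ell)}$ (the paper's Proposition~\ref{prop:gamma-is-reg} and Proposition~\ref{prop:improved-bounds}), then decide vanishing by the span-stabilisation/zero-criterion argument (Lemma~\ref{lem:zero-criterion} and Section~\ref{sec:Implementation}). The two obstacles you flag are exactly the two places where the paper does a little work: the uniform-in-$m$ carry is handled by the elementary bound $e' = \lfloor (j+i+e)/k \rfloor \leq 1$ in Proposition~\ref{prop:gamma-is-reg}, so only a single extra shift $S$ is ever needed; and existence of the Ces\`aro limits is not obtained via primitivity but more simply from the observation (Lemma~\ref{lem:char-of-PC}) that any two elements $b,c$ of $\mathcal{N}_k(a_A)$ satisfy $b\bar c = $ (periodic), so $\gamma_{b,c}(0)$ exists trivially and Lemma~\ref{lem:gamma-exists} bootstraps to all $m$.
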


While we keep the exposition fairly self-contained, we also wish to emphasize that the above problem can be seen as a part of a larger theory. We note that binary \pc{} sequences are $2$-automatic (see Section \ref{sec:Background} for the relevant definitions). A crucial component of our reasoning is Theorem \ref{cor:gamma-is-reg}, which assets that the correlation sequences coming from automatic sequences are regular. While this result will not come as a surprise to the experts in the field, to the best of our knowledge it does not appear in print elsewhere. Its importance stems from the fact that a regular sequence admits a simple recursive description, an hence many properties are easily verified for such a sequence. In our particular application, we reduce the task of determining if a \pc{} sequence is noncorrelated to the ostensibly simpler task of determining if a $2$-regular sequence is identically zero.

The problem of classifying noncorrelated \pc{} sequences becomes more tractable if we impose additional assumptions on the set of patterns under consideration. Let us call a binary \pc{} sequence $a \colon \NN_0 \to \{+1,-1\}$ \emph{\sinv{}} if $a(2n) = a(n)$ for all $n \in \NN_0$, or equivalently, if $a = a_A$ for a set $A$ that contains only patterns that begin and end with $\mathtt{1}$ (see Section \ref{ssec:Invariant} for details). In the \sinv{} case, we have a conjectural classification, which we are able to confirm in one direction in full generality, and in the opposite direction for patterns of length $\leq 5$. 

\begin{alphatheorem}\label{thm:C}
	Let $A$ be a set of patterns over the alphabet $\{\mathtt 0, \mathtt 1\}$, all of which begin and end with $\mathtt 1$. Let $\ell$ be the length of the longest word in $A$ and let $a = a_A$ be the corresponding binary \pc{} sequence. If $\ell \geq 2$ and $\mathtt{1} \{\mathtt 0,\mathtt 1\}^{\ell -2} \mathtt{1} \subset A$ then $a$ is noncorrelated. Conversely, if $2 \leq \ell \leq 5$ and $a$ is noncorrelated then $\mathtt{1} \{\mathtt 0, \mathtt1\}^{\ell -2} \mathtt{1} \subset A$.
\end{alphatheorem}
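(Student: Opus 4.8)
The plan is to prove the two halves quite differently: the sufficiency direction in full generality, following the classical treatment of the Rudin--Shapiro sequence, and the necessity direction (for $\ell\le 5$) by reducing it, via Theorem~\ref{thm:B}, to a terminating computation over a finite list of pattern sets.

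\emph{Sufficiency.} Let $a = a_A$ with $A$ as hypothesised. I would first prove a bookkeeping lemma: deleting the last binary digit of $n$ affects only the occurrences at position $0$, so $a(2k) = a(k)$ (every word of $A$ ends with $\mathtt 1$) and $a(2k+1) = \phi(k)\,a(k)$, where $\phi(k) \in \{+1,-1\}$ records the parity of the number of words of $A$ occurring at position $0$ in $2k+1$. One checks that $\phi(k)$ depends only on $k \bmod 2^{\ell-1}$ and, crucially, that
\begin{equation*}
	\phi(k + 2^{\ell-2}) = -\phi(k) \qquad (k \in \NN_0).
\end{equation*}
This antiperiodicity is precisely where the hypothesis $\mathtt1\{\mathtt0,\mathtt1\}^{\ell-2}\mathtt1 \subset A$ is used: of the words contributing to $\phi(k)$ only the one of length $\ell$ involves the digit $\epsilon_{\ell-2}(k)$, and because \emph{all} words of length $\ell$ beginning and ending with $\mathtt1$ lie in $A$, that contribution equals $\epsilon_{\ell-2}(k)$, whose parity flips when $k \mapsto k + 2^{\ell-2}$. (For $\ell = 2$ one recovers $\phi(k) = \pm(-1)^k$ and $a$ is the Rudin--Shapiro sequence.)

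Now split $S_K(x) = \sum_{n<2^K} a(n)x^n$ by the last $\ell-1$ digits of $n$ into polynomials $P^{(K)}_\sigma$, $\sigma\in\{\mathtt0,\mathtt1\}^{\ell-1}$, so $S_K = \sum_\sigma P^{(K)}_\sigma$. The recursion gives $\mathbf P^{(K+1)}(x) = M(x)\,\mathbf P^{(K)}(x^2)$ for a fixed $2^{\ell-1}\times 2^{\ell-1}$ matrix $M(x)$ with entries in $\{0,1,\pm x\}$, two nonzero per row and column. A short computation shows that on $|x|=1$ the diagonal of $MM^*$ is $2$ while a nonzero off-diagonal entry has the form $\bar x\,(\phi_{\mathtt0\tau} + \phi_{\mathtt1\tau})$ with $\mathtt1\tau \equiv \mathtt0\tau + 2^{\ell-2} \pmod{2^{\ell-1}}$, so it vanishes by the antiperiodicity; hence $MM^* = 2I$ on the circle. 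Iterating $\mathbf P^{(K)} = M(x)M(x^2)\cdots M(x^{2^{K-1}})\mathbf P^{(0)}(x^{2^K})$ and using $M^*M = 2I$ yields the flatness identity $\sum_\sigma |P^{(K)}_\sigma(x)|^2 \equiv 2^K$ on $|x| = 1$: the $P^{(K)}_\sigma$ form a family of generalised Rudin--Shapiro polynomials. From this I would conclude — exactly as for the classical Rudin--Shapiro polynomials, and in the spirit of the theory of generalised Rudin--Shapiro sequences (cf.\ \cite{AlloucheLiardet-1991}) — that the spectral measure of $a$ is the Lebesgue measure, i.e.\ $a$ is noncorrelated. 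Concretely, the matrix-valued measure $d\Pi = \lim_K 2^{-K}\mathbf P^{(K)}(\mathbf P^{(K)})^*\,dx$ obeys a self-similarity under the doubling map and has trace $dx$; combined with $MM^* = 2I$ this forces $d\Pi = 2^{-(\ell-1)}I\,dx$, whence $\gamma_a(m) = \mathbf 1^* \widehat{d\Pi}(m)\,\mathbf 1 = [m=0]$. A self-contained alternative is to set up and solve a finite linear system of recursions in $m$ relating $\gamma_a$ to the twisted averages $\gamma^{(h)}_a(m) = \lim_N \frac1N\sum_{n<N} h(n)a(n)a(n+m)$ over $\pm1$-valued sequences $h$ of period $2^{\ell-1}$; already the base case $\gamma^{(h)}_a(1) = 0$ follows from a contraction, the recursion expressing $\gamma^{(h)}_a(1)$ as $\tfrac12\gamma^{(h')}_a(1)$ with the second would-be summand having mean zero by the antiperiodicity of $\phi$.

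\emph{Necessity for $\ell \le 5$.} The case $\ell = 2$ is vacuous: $\mathtt{11}$ is the only word of length $2$ beginning and ending with $\mathtt1$, so any $A$ (with words beginning and ending with $\mathtt1$) whose longest word has length $2$ already contains it. For $\ell \in \{3,4,5\}$ there are only finitely many such $A$ with longest word of length $\ell$ — at most $2^{16}$ for $\ell = 5$ — so it suffices to run the decision procedure of Theorem~\ref{thm:B}, extended in the evident way to sets of patterns of mixed length $\le\ell$, on every such $A$ that does not contain all of $\mathtt1\{\mathtt0,\mathtt1\}^{\ell-2}\mathtt1$, and record that $a_A$ turns out to be correlated in each case; for $\ell = 3$ this is also contained in the classification of \cite{ZhengPengKamae-2018}.

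\emph{Where the difficulty lies.} The combinatorial lemma and the identity $MM^* = 2I$ are routine once the antiperiodicity of $\phi$ is established. The real content is the passage from flatness to \emph{Lebesgue} spectrum uniformly in $\ell$: flatness alone does not determine the spectral measure, and one must exploit the rigidity of the self-similar structure — the identity $MM^* = 2I$ together with a Riemann--Lebesgue/equidistribution argument showing that $d\Pi$ averages to its mean under the doubling map — or, in the recursion formulation, verify that the finite system admits no solution with $\gamma_a(m) \ne 0$ for some $m \ge 1$, where the essential inputs are the base case above and the sign reversal $\phi(k)\phi(k+2^{\ell-2}) = -1$ arising from the antiperiodicity.
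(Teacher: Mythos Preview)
Your proposal is sound, and for the necessity direction (exhaustive search via Theorem~\ref{thm:B}) it matches the paper exactly. For the sufficiency direction, your primary route---generating-function flatness in the Rudin--Shapiro style, proving $MM^* = 2I$ on the circle and then extracting Lebesgue spectrum---is genuinely different from the paper's. The paper works directly with the restricted correlation coefficients $\gamma_r(m)$ of \eqref{eq:def-of-gamma(r,j)}, computes them explicitly for all $m$ via the recursion of Lemma~\ref{lem:rec-PC} (this is Lemma~\ref{lem:sat-gamma_r(i)}), and then sums over $r$ to obtain $\gamma_a(m) = 0$. The key combinatorial input is the same in both: your antiperiodicity $\phi(k+2^{\ell-2}) = -\phi(k)$ is exactly the $k=2$ case of the paper's cancellation Lemma~\ref{lem:sat-cancel}. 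Your ``self-contained alternative'' via twisted averages $\gamma_a^{(h)}$ is in fact the paper's argument in thin disguise, since the $\gamma_r$ are precisely such twisted averages and the contraction you mention is the paper's recursion. What your flatness approach buys is a clean structural statement ($MM^* = 2I$) and a direct link to the generalised Rudin--Shapiro literature, at the cost of the nontrivial flatness-to-Lebesgue step you rightly flag; the paper's recursion approach is more elementary and self-contained, and carries over verbatim to arbitrary base (Proposition~\ref{prop:sat->NC}), where the analogue of your antiperiodicity becomes a Hadamard-matrix condition (Remark~\ref{rmk:saturated-large-k}).
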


\begin{conjecture}\label{conj:NC+inv->sat}
	Let $A$, $\ell$ and $a$ be as in Theorem \ref{thm:C}. If $a$ is noncorrelated then $\ell \geq 2$ and $\mathtt{1} \{\mathtt 0, \mathtt1\}^{\ell -2} \mathtt{1} \subset A$.
\end{conjecture}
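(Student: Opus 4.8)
The ``if'' direction is the first half of Theorem~\ref{thm:C}, proved there with no restriction on $\ell$, so what remains is to remove the hypothesis $\ell \le 5$ from its converse. I would begin by setting up the correlation computation in linear-algebraic terms. A dilation-invariant binary \pc{} sequence $a = a_A$, with every pattern of $A$ beginning and ending with $\mathtt 1$ and of length $\le \ell$, is generated by the automaton whose states record the last $\ell - 1$ binary digits read; as the excerpt notes, the correlation sequence $\gamma_a$ is then $2$-regular. Hence there are finite matrices $M_0, M_1$ on a space $V$ of ``pair-states'', an initial vector $v_\ast \in V$, and a functional $\phi$ with $\gamma_a(m) = \phi\bigl(M_{\e_0}M_{\e_1}\cdots M_{\e_k}v_\ast\bigr)$ whenever $m$ has binary digits $\e_0\e_1\cdots\e_k$; dilation-invariance should be used both to collapse a factor of $2$ in $V$ and to render $\gamma_a(2m) = \gamma_a(m)$ manifest. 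Noncorrelation then becomes the condition that $\phi$ annihilate the subspace of $V$ reached from $v_\ast$ by all \emph{nonempty} digit strings.

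Next I would make the dependence on $A$ explicit by writing $x_v = \ind\bbra{\mathtt 1 v \mathtt 1 \in A}$ for words $v$ of length $\le \ell - 2$; the entries of $M_0, M_1, v_\ast, \phi$ are then integer-linear combinations of the signs $(-1)^{x_v}$, and the statement to prove reads: if $\phi$ annihilates the subspace above, then $x_v = 1$ for every $v$ of length exactly $\ell - 2$. The direct plan is to isolate a structured finite family $\mathcal F$ of moduli — natural candidates being those $m$ whose binary expansion has length $\le 2\ell$, or even just $m \in \{1,\, 2^{\ell-1},\, 2^{\ell-1}+1\}$ together with their immediate neighbours — to compute $\gamma_a(m)$ for $m \in \mathcal F$ in closed form by multiplying out the corresponding short products $M_{\e_0}\cdots M_{\e_k}v_\ast$, and to show that $\gamma_a(m) = 0$ for all $m \in \mathcal F$ already forces every $x_v$ to equal $1$. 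A more robust alternative would be an induction on $\ell$: from a noncorrelated $A$ of top length $\ell$ one would manufacture a dilation-invariant pattern set of top length $\ell - 1$ — for instance via the map that deletes a distinguished interior digit, or by passing to a subsequence $a(2^{\ell-1}n + r)$ — and argue that noncorrelation is inherited, bottoming out at the cases $\ell = 2, 3$ classified in \cite{ZhengPengKamae-2018}.

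The main obstacle is precisely this last step. The coefficients $\gamma_a(m)$ are genuinely nonlinear in the variables $x_v$: they are signed sums of products of matrix entries indexed by the digits of $m$, so the implication ``vanishing $\Rightarrow$ saturated'' cannot be read off from any single fixed-size linear system, and the number of moduli one would a priori need to inspect grows with $\ell$. The verification for $\ell \le 5$ underlying Theorem~\ref{thm:C} is carried out by the decision procedure of Theorem~\ref{thm:B} and does not, on its own, expose a uniform mechanism. Supplying one — either a provably sufficient finite family $\mathcal F$ of moduli whose correlations admit a tractable closed form, or a working length-reduction argument — is the crux, and is exactly the gap between Theorem~\ref{thm:C} and Conjecture~\ref{conj:NC+inv->sat}.
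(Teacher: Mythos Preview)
The statement you are attempting is Conjecture~\ref{conj:NC+inv->sat}, which the paper leaves \emph{open}; there is no proof in the paper to compare against. The partial converse in Theorem~\ref{thm:C} is established only for $\ell \le 5$, and that is done by exhaustive computer search using the algorithm of Section~\ref{sec:Implementation}, not by any structural argument that might generalise.

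Your proposal is not a proof but a survey of plausible attack lines, and you yourself correctly identify the essential obstruction in your final paragraph: neither the ``finite family of moduli'' approach nor the ``length-reduction'' approach is carried through. For the first, you would need to exhibit an explicit family $\mathcal{F}$ whose size is controlled in $\ell$ and prove that vanishing of $\gamma_a$ on $\mathcal{F}$ forces saturation; the nonlinearity in the $x_v$ that you note is precisely why the paper does not push past $\ell = 5$. For the second, there is no evident mechanism by which noncorrelation of $a_A$ would descend to a pattern set of shorter length under digit deletion or restriction to an arithmetic progression --- such heredity typically fails for automatic sequences, and you offer no argument for it here. Your assessment of the difficulty is accurate, but the conjecture remains open and your proposal does not close it.
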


If $A = \mathtt{1}\{\mathtt{0},\mathtt{1}\}^{\ell-2}\mathtt{1}$ then the fact that $a_A$ is noncorrelated follows from \cite{ZhengPengKamae-2018}. More generally, Theorem 1.3 in \cite{ZhengPengKamae-2018} (see also \cite{AlloucheLiardet-1991}) provides a classification of all noncorrelated binary \pc{} sequences $a_A$ for sets of patterns of the form $A = w_1 \{\mathtt 0,\mathtt 1\}^{\ell_1} w_2  \{\mathtt 0,\mathtt 1\}^{\ell_2} \dots w_{s}  \{\mathtt 0,\mathtt 1\}^{\ell_s} w_{s+1}$ where $s \in \NN$, $w_i$ are words over the alphabet $\{\mathtt 0,\mathtt 1\}$ and $\ell_1,\ell_2, \dots, \ell_s \in \NN_0$. Conjecture \ref{conj:NC+inv->sat} is consistent with said classification.

Returning to the general case, we notice that each binary \pc{} sequence $a \colon \NN_0 \to \{+1,-1\}$ can be written as the product of a periodic sequence $h$ and a \sinv{} \pc{} sequence $b$ (Lemma \ref{lem:PC-rep-invariant}). The correlation coefficients of $a$ and $b$ are closely related (see also Remark \ref{rmk:final}), and in all cases that we were able to check (i.e., $\ell \leq 4$), if $a$ is noncorrelated then so is $b$. This motivates us to put forward the following conjecture.

\begin{conjecture}\label{conj:NC->inv_x_per}
	Let $a \colon \NN_0 \to \{+1,-1\}$ be a noncorrelated binary \pc{} sequence. Then $a$ is the product of a periodic sequence and an \sinv{} noncorrelated binary \pc{} sequence
\end{conjecture}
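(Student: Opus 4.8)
By Lemma \ref{lem:PC-rep-invariant} we may write $a = h\cdot b$, where $h$ is periodic — say of period $q = 2^{k}$ — and $b$ is an \sinv{} binary \pc{} sequence; since $a$ and $b$ take values in $\{+1,-1\}$, so does $h = ab$. The goal is to show that the hypothesis $\gamma_a(m)=0$ for all $m\ge 1$ forces $\gamma_b(m)=0$ for all $m\ge 1$, i.e.\ that $b$ too is noncorrelated.

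\emph{Step 1: split $\gamma_a$ along residues.} For $j\in\ZZ/q\ZZ$ set
\[
	\gamma_{b,j}(m) := \lim_{N\to\infty}\frac{q}{N}\sum_{\substack{0\le n<N\\ n\equiv j\ (q)}} b(n)\,\bar b(n+m),
\]
the limit existing since $b$ is $2$-automatic. Then $\gamma_b(m)=\tfrac1q\sum_j\gamma_{b,j}(m)$, and since $h$ is $q$-periodic we have $a(n)=h(j)\,b(n)$ and $a(n+m)=h(j+m)\,b(n+m)$ whenever $n\equiv j\ (q)$, whence
\[
	\gamma_a(m)=\frac1q\sum_{j\in\ZZ/q\ZZ} h(j)\,h(j+m)\,\gamma_{b,j}(m),\qquad m\ge 1.
\]
If $q\mid m$ then every weight $h(j)h(j+m)$ equals $h(j)^2=1$, so $\gamma_b(m)=\gamma_a(m)=0$. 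Thus the conjecture reduces to propagating the vanishing of $\gamma_b$ from the progression $q\NN$ to all of $\NN$, using only that $b$ is \sinv{}.

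\emph{Step 2: exploit self-similarity.} Inspecting $\#(A,2n+1)$ shows that $b(2n+1)=p(n)\,b(n)$ for some $p\colon\NN_0\to\{+1,-1\}$ periodic of period dividing $2^{\ell-1}$, where $\ell$ is the maximal pattern length; the periodicity is precisely where the hypothesis that the patterns of $A$ end with $\mathtt 1$ enters. Together with $b(2n)=b(n)$, this shows that the family of correlation sequences $\{\gamma_{b,j}\}_j$ (over residues $j$ modulo a fixed power of $2$) spans a finitely generated $\ZZ[1/2]$-module stable under the $2$-decimations $(c(m))_m\mapsto(c(2m+r))_m$, $r\in\{0,1\}$ — the concrete incarnation of the fact that the correlation sequence of a $2$-automatic sequence is $2$-regular. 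The plan is to feed these recurrences, together with the single family of identities from Step 1, into a descent that forces $\gamma_b$ to vanish one residue class of $m$ at a time.

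\emph{Main obstacle.} The difficulty is exactly this descent. The hypothesis supplies a \emph{single} linear relation among the $q$ sequences $\{\gamma_{b,j}\}_j$, with coefficients depending on $m$ only modulo $q$, whereas we want the one distinguished combination $\sum_j\gamma_{b,j}(m)$ to vanish for every $m$; a priori the $\gamma_{b,j}$ could conspire so that the weighted sum vanishes while the unweighted one does not off $q\NN$ (note that a $2$-regular sequence vanishing on $q\NN$ need not vanish identically, so the interplay with $\gamma_a$ is genuinely needed, not merely $2$-regularity of $\gamma_b$). Excluding this seems to require understanding how the module generated by $\{\gamma_{b,j}\}_j$ sits inside the analogous one attached to $a$ — in effect, whether twisting an \sinv{} \pc{} sequence by a periodic sign can create cancellation that survives averaging over residues but not the full averaging defining $\gamma_b$ — or, via Conjecture \ref{conj:NC+inv->sat}, control over which patterns must belong to the set attached to $a$; I do not see how to do either in general. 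What one \emph{can} do unconditionally, for any fixed $\ell$, is invoke Theorem \ref{thm:B}: compute $\gamma_a$, recover $b$ from Lemma \ref{lem:PC-rep-invariant}, compute $\gamma_b$, and check the implication over the finite list of noncorrelated $a_A$ with $A\subset\{\mathtt 0,\mathtt 1\}^{\le\ell}$. This is the verification carried out for $\ell\le 4$, and the same finite procedure settles any prescribed $\ell$, though not uniformly in $\ell$.
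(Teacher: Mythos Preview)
This statement is a \emph{conjecture} in the paper and is not proved there; the paper only reports a verification by exhaustive search for pattern length $\ell\le 4$ (see the paragraph preceding the conjecture and Remark~\ref{rmk:final}). Your write-up is, correspondingly, not a proof either, and you are explicit about this: you set up the reduction via Lemma~\ref{lem:PC-rep-invariant}, derive the weighted-average relation between $\gamma_a$ and the residue-restricted correlations of $b$, and then name the genuine obstacle --- a single linear relation among the $\gamma_{b,j}$ with $m$-dependent weights $h(j)h(j+m)$ does not force the unweighted sum $\sum_j\gamma_{b,j}(m)$ to vanish. Your fallback to finite verification for each fixed $\ell$ via Theorem~\ref{thm:B} is exactly what the paper does.

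For comparison, Remark~\ref{rmk:final} records a slightly sharper form of your Step~1: working with the finer coefficients $\gamma_r$ of \eqref{eq:def-of-gamma(r,j)} rather than your $\gamma_{b,j}$, one gets $\gamma'_r(m)=\pm\gamma_r(m)$ for each $r$ individually, the sign being $a'(r)a'(r+m)/a(r)a(r+m)$. This pointwise relation immediately transfers vanishing of individual $\gamma_r(m)$ in either direction, but the alignment of signs needed to pass from $\sum_r \pm\gamma_r(m)=0$ to $\sum_r\gamma_r(m)=0$ is precisely the obstruction you identify, and the paper too leaves it as an apparent arithmetic coincidence to be checked case by case.

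One notational slip: you write $q=2^k$, but in this paper $k$ denotes the base (here $k=2$); the period supplied by Lemma~\ref{lem:PC-rep-invariant} is $2^{\ell-1}$.
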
 

Above we restricted our attention to base $2$ for the sake of brevity. In the remaining part of the paper, we work in arbitrary base $k \geq 2$. In particular, the natural base-$k$ variant of Theorem \ref{thm:B} holds true. The same applies to the first part of Theorem \ref{thm:C}, except that it is less clear what the base-$k$ variant should be and the resulting statement is vacuous for many values of $k$ (see Proposition \ref{prop:sat->NC}). When it comes to computations, we only consider base $2$ since for larger bases the number of distinct \pc{} sequences  becomes so large that merely listing them all is already infeasible even for modest pattern lengths.

\subsection*{Acknowledgements}
While writing this paper, the author was supported by the ERC grant ErgComNum 682150 at the Hebrew University of Jerusalem. During the review process, the author was working within the framework of the LABEX MILYON (ANR-10-LABX-0070) of Universit\'{e} de Lyon, within the program "Investissements d'Avenir" (ANR-11-IDEX-0007) operated by the French National Research Agency (ANR). The author also acknowledges support from the Foundation for Polish Science (FNP).

The author wishes to express his gratitude to Boris Adamczewski, Jakub Byszewski, Aihua Fan and Tamar Ziegler for helpful conversations and to the anonymous Referee for the careful reading of this paper and constrictive suggestions.
 
\section{Background and definitions}\label{sec:Background}

\color{good}
\textit{\textbf{Convention:} Throughout the paper, $k$ denotes the base and is considered to be fixed. In particular, all constructions and constants are allowed to depend on $k$ unless explicitly stated otherwise.}

\subsection{\Pc{} sequences}\color{final}\label{ssec:PatternCountingDefs} 
We let $\Sigma_k = \{\mathtt 0, \mathtt 1,\dots,k-1\}$ denote the set of digits in base $k$. For a set $X$, we let $X^*$ denote the monoid consisting of words over the alphabet $X$, equipped with the operation of concatenation and neutral element $\epsilon$, the empty word. For $v \in X^*$, $\abs{v}$ denotes the length of $v$. For $n \in \NN_0$, $(n)_k \in \Sigma_k^*$ denotes the expansion of $n$ in base $k$ (without leading zeros). Conversely, for $v \in \Sigma_k^*$, $[v]_k \in \NN_0$ denotes the integer encoded by $v$.

Let $X$ be a set. We say that a word $v \in X^*$ appears in another word $w \in X^*$, or that $v$ is a factor of $w$, if there exist $x,y \in X^*$ such that $w = xvy$. We call $v$ a prefix (resp.{} suffix) of $w$ if we may take $x = \epsilon$ (resp.{} $y = \epsilon$). We further define $\#(v,w)$ to be the number of times $v$ appears in $w$, that is, the number distinct of pairs $(x,y) \in X^* \times X^*$ such that $w = xvy$. We note that this definition allows for overlaps, so for instance $\#(\mathtt{010},\mathtt{01010}) = 2$. More generally, for a finite set $A \subset X^*$, we define $\#(A,w) = \sum_{v \in A} \#(v,w)$.

Accordingly, for $n \in \NN_0$ and $v \in \Sigma_k^* \setminus \{\mathtt 0\}^*$, $\#(n,v)$ denotes the number of times that $v$ appears in the base-$k$ expansion of $n$ padded with sufficiently many leading zeros, that is, $\#(v,n) = (v,\mathtt 0^{\abs{v}-1} (n)_k)$. The inclusion of the leading zeros in the expansion of $n$ ensures better behaviour of the map $n \mapsto \#(v,n)$; in particular, for each $n,m \in \NN_0$ and sufficiently large $\alpha \in \NN_0$ we have $\#(v,k^\alpha m + n) = \#(v,k^\alpha m)+\#(v, n)$.
The assumption that $v$ is not a string of zeros ensures that $\#(v,n)$ is well-defined, in the sense that for fixed $n$, the sequence $(v,\mathtt 0^{\alpha} (n)_k)$ ($\alpha \in \NN_0$) is eventually constant. 

We will call a set $A \subset \Sigma_k^*$ \emph{admissible} if $A$ is finite and $A \cap \{\mathtt 0\}^* = \emptyset$, so that we may define $\#(A,n) = \sum_{v \in A} \#(v,n)$. For any admissible set $A$, the corresponding \emph{\pc{} sequence} is defined by (cf.{} \cite[Definition 1.1]{ZhengPengKamae-2018})
\begin{equation}\label{eq:def-of-a_A}
	a_{A}(n) = (-1)^{\#(A,n)}.
\end{equation}
If additionally $\abs{u} \leq \ell$ for all $u \in A$ then we say that $a$ is a \pc{} sequence of length $\leq \ell$, or equivalently we define the \emph{length} of $a$ as the least possible value of $\max_{u \in A} \abs{u}$ among all representations of $a$ in the form \eqref{eq:def-of-a_A}, where $A \subset \Sigma_k^*$ is an admissible set. Note that one \pc{} sequence can have multiple representations of the aforementioned form.

For two sets $A,B$, we let $A \oplus B$ denote the symmetric difference $(A\setminus B) \cup (B\setminus A)$.

\begin{lemma}\label{lem:PC-is-closed}
	The class of \pc{} sequences $\NN_0 \to \{+ 1, -1\}$ is closed under multiplication.
\end{lemma}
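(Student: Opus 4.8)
The plan is to argue directly from the definition \eqref{eq:def-of-a_A}, exploiting the fact that the exponent in $(-1)^{\#(A,n)}$ only matters modulo $2$. Write the two given \pc{} sequences as $a = a_A$ and $b = a_B$ for admissible sets $A, B \subset \Sigma_k^*$, so that for every $n \in \NN_0$ we have $a(n)b(n) = (-1)^{\#(A,n)}(-1)^{\#(B,n)} = (-1)^{\#(A,n)+\#(B,n)}$. Then I would unwind
\[
	\#(A,n)+\#(B,n) = \sum_{v \in A}\#(v,n) + \sum_{v \in B}\#(v,n),
\]
and observe that each pattern $v \in A \cap B$ contributes the even quantity $2\#(v,n)$, whereas each $v \in A \oplus B$ contributes $\#(v,n)$ exactly once. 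Hence $\#(A,n)+\#(B,n) \equiv \#(A \oplus B, n) \pmod 2$, and therefore $a(n)b(n) = (-1)^{\#(A\oplus B, n)} = a_{A\oplus B}(n)$.

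The remaining step is to check that $A \oplus B$ is admissible, so that $a_{A \oplus B}$ is genuinely a \pc{} sequence: this is immediate, since $A \oplus B \subseteq A \cup B$ is finite and avoids $\{\mathtt 0\}^*$ whenever $A$ and $B$ both do. In the degenerate case $A \oplus B = \emptyset$ the product is the constant sequence $1 = a_\emptyset$, which is still a \pc{} sequence, the empty set being admissible.

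I do not anticipate any real obstacle here: the only point requiring care — and presumably the reason the symmetric difference $\oplus$ was introduced immediately before the statement — is the double-counting of patterns lying in both $A$ and $B$, which is harmless modulo $2$. It is worth recording, for use later in the paper, that the argument shows slightly more than the bare statement, namely that $a_A a_B = a_{A \oplus B}$, a fact about representations rather than merely about the underlying sequences.
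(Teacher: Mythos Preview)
Your argument is correct and is exactly the approach taken in the paper: the paper's proof consists of the single observation $a_A \cdot a_B = a_{A \oplus B}$, and you have simply unpacked the modulo-$2$ counting that justifies it together with the (immediate) admissibility of $A \oplus B$. Your final remark that the identity $a_A a_B = a_{A \oplus B}$ is the real content is spot on.
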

\begin{proof}
	It is enough to note that for any admissible sets $A,B \subset \Sigma_k^*$ we have 
	\[ a_A \cdot a_B = a_{A \oplus B}.\qedhere\]
\end{proof}

It will usually be convenient to impose further restrictions on the set of patterns $A$. Depending on the context we require that either $A$ has not leading zeros (in the sense that that $\mathtt{0}$ is not a prefix of $v$ for any $v \in A$) or that $A$ has constant length (in the sense that there is some $\ell \in \NN$ such that $\abs{v} = \ell$ for all $v \in A$).
\begin{lemma}\label{lem:PC-representation}
	Let $\ell \in \NN_0$ and let $a \colon \NN_0 \to \{+1,-1\}$ be a \pc{} sequence of length $\leq \ell$. Then there exist admissible sets $B,C \subset \Sigma_k^*$ such that $B$ has no leading zeros, $C$ has constant length $\ell$, and $a = a_B = a_C$. Moreover, $B$ and $C$ are uniquely determined by $a$. 
\end{lemma}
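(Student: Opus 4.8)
My plan rests on one elementary fact, the \emph{splitting identity}
\[
	\#(v,n) = \sum_{d \in \Sigma_k} \#(dv,n) \qquad \text{for all } n \in \NN_0 \text{ and } v \in \Sigma_k^* \setminus \{\mathtt 0\}^*,
\]
which I would prove by padding $(n)_k$ with $\abs{v}$ leading zeros instead of $\abs{v}-1$: this does not change $\#(v,n)$ because $v$ is not a string of zeros, and in the longer word every occurrence of $v$ is immediately preceded by a digit. Restated for \pc{} sequences with Lemma \ref{lem:PC-is-closed}, it says $a_{\{v\}} = a_{\{dv \,:\, d \in \Sigma_k\}}$, equivalently $a_{\{\mathtt 0 u\}} = a_{\{u\} \cup \{du \,:\, d \neq \mathtt 0\}}$. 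All the existence statements will follow by repeatedly applying these two rewriting rules to an arbitrary initial representation $a = a_A$ with $\abs{u} \leq \ell$ for all $u \in A$.

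For $C$, I would repeatedly pick a $u \in A$ with $\abs u < \ell$ and replace $A$ by $A \oplus \{u\} \oplus \{du \,:\, d \in \Sigma_k\}$: this preserves $a_A$ and admissibility, and the tuple $\bigl(\#\{u\in A : \abs u = 1\}, \dots, \#\{u\in A : \abs u = \ell-1\}\bigr)$ strictly decreases lexicographically (the entry at length $\abs u$ drops, the shorter entries stay put), so the process halts at a set of constant length $\ell$. For $B$, I would symmetrically pick a $u = \mathtt 0 u' \in A$ with a leading zero and replace $A$ by $A \oplus \{u\} \oplus \{u'\} \oplus \{du' \,:\, d \neq \mathtt 0\}$; word lengths never increase, and the tuple counting words with $\ell-1, \ell-2, \dots, 1$ leading zeros strictly decreases lexicographically, so the process halts at a set $B$ with no leading zeros.

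Uniqueness I would reduce to the single rigidity statement: \emph{if $D$ is admissible, $a_D = \ind$, and $D$ has constant length or no leading zeros, then $D = \emptyset$}, applied to $D = C \oplus C'$ and to $D = B \oplus B'$. Assuming $D \neq \emptyset$, I would contradict $a_D = \ind$ by evaluating at one well-chosen integer. In the constant-length case (length $\ell$), take $w_0 \in D$ with the \emph{maximal} number $s$ of leading zeros, write $w_0 = \mathtt 0^s c$ with $c$ starting with a nonzero digit, and evaluate at $n_0 = [c]_k$: the length-$\ell$ factors of $\mathtt 0^{\ell-1}c$ are exactly the words $\mathtt 0^{\,\ell - \abs p}p$ with $p$ a nonempty prefix of $c$, they are pairwise distinct (their numbers of leading zeros $\ell - \abs p$ run through $s, s+1, \dots, \ell-1$), and the only one in $D$ is $w_0$ itself, so $\#(D,n_0) = \#(w_0,n_0) = 1$. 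In the no-leading-zeros case, take $w_0 \in D$ of \emph{minimal} length $m$ and evaluate at $n_0 = [w_0]_k$: since every word of $D$ begins with a nonzero digit, every occurrence of any $w \in D$ in $\mathtt 0^{\ell-1}w_0$ (with $\ell = \max_{w \in D}\abs w$, enough padding to compute all the $\#(w,n_0)$) must begin exactly where $w_0$ does, which forces $\abs w \leq m$, hence $w = w_0$; so again $\#(D,n_0) = 1$.

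The existence part is routine bookkeeping and the factor count in the uniqueness part is a short calculation, so I expect the only genuinely delicate point to be the choice of which pattern to probe with in the rigidity step — the one with the most leading zeros, respectively the smallest one — since a careless choice fails to single out a unique contributing pattern. I anticipate no obstacle past that. (Conceptually, the uniqueness assertions are the $\mathbb F_2$-linear independence of the functions $n \mapsto \#(w,n) \bmod 2$ as $w$ ranges over $\Sigma_k^\ell \setminus \{\mathtt 0\}^*$, respectively over words of length $\leq \ell$ without leading zeros, the splitting identity being the only source of linear relations among the $\#(w,\cdot)$.)
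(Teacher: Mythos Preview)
Your proposal is correct and follows essentially the same approach as the paper: the same splitting identity drives the same rewriting rules for existence, and uniqueness is obtained by evaluating $a_D$ at $[w_0]_k$ with $w_0$ chosen to have maximal leading zeros (constant-length case) or minimal length (no-leading-zeros case), exactly as in the paper. Your termination bookkeeping via lexicographic tuples is a harmless variant of the paper's simpler invariants (total number of leading zeros, respectively number of shortest words); the only minor imprecision is in the no-leading-zeros uniqueness step, where ``begins with a nonzero digit'' gives start position $\geq \ell-1$ rather than $=\ell-1$, but this still forces $\abs w \leq m$ and hence $w=w_0$ as you claim.
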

\begin{proof}
	Pick any admissible set $A \subset \Sigma_k^*$ with $a = a_A$.
	Note that for each $v \in \Sigma_k^*$ and each $n \in \NN_0$ we have 
	\begin{equation}\label{eq:161:1}
		\#(v,n) = \sum_{i = 0}^{k-1} \#(iv,n)
	\end{equation}
	
	To construct $B$, begin with $A$ and as long as $A$ contains at least one word starting with $\mathtt 0$, say $\mathtt 0 v$, replace $A$ with $A = A \oplus \set{ i v}{i \in \Sigma_k }\oplus \{v\}$. Because of \eqref{eq:161:1}, this operation does not change the sequence $a_A$. Since each iteration decreases the total number of leading zeros in the patterns in $A$, after a finite number of steps this procedure must terminate and the resulting set of patterns has no leading zeros.
		
	 To construct $C$, likewise, begin with $A$ and as long as $A$ contains at least one word $v$ with length $\abs{v} < \ell$, pick the shortest such word $v$ and replace $A$ with $A \oplus \set{ i v}{i \in \Sigma_k }\oplus \{v\}$. Like before, this operation does not change the sequence $a_A$. Each iteration either decreases the number of words in $A$ with least possible length, or increases the length of the shortest word in $A$. At the same time, no words of length larger than $\ell$ are introduced. Hence, after a finite number of steps this procedure must terminate and the resulting set of patterns has constant length equal to $\ell$. 
	 
	 It remains to show uniqueness. Using Lemma \ref{lem:PC-is-closed}, we may assume that $A = \emptyset$. For the sake of contradiction, suppose that one of $B$ and $C$ is non-empty. Consider first the case when $B \neq \emptyset$ and let $v$ be the shortest word in $B$. Then $1 =a_B([v]_k) = -1$, since $\mathtt 0^\ell v$ contains exactly one pattern from $B$, namely $v$. Hence, we have reached a contradiction. Consider next the case when $C \neq \emptyset$ and choose the word $\mathtt 0^m v \in C$ where $m$ is largest possible. Then we again reach the contradiction: $1 = a_C([v]_k) = -1$.  
\end{proof}

\begin{remark}\color{newgood}
	We focus our attention on $\pm 1$-valued sequences for two basic reasons. The first one is practical: The noncorrelation phenomenon that we are interested in relies on occurrence of certain arithmetic coincidences, which become less likely as the number of possible values increases; accordingly, the computational part of the problem becomes increasingly resource-intensive as sequences under consideration become more complicated. The second reason is conceptual: For a $\pm 1$-valued sequence $a$ with mean $\M{a} = 0$, noncorrelation is tantamount to equidistribution of the pairs $a(n),a(n+m)$. More precisely, for each $m \in \NN$, if we additionally assume that the limits mentioned above exist then $\gamma_{a}(m) = 0$ if and only if
	\[
		  \lim_{N \to \infty}\frac{\abs{\set{n < N}{a(n) = i,\ a(n+m) = i'}}}{N} =\frac{1}{4} \quad \text{ for all } i,i' \in \{+1,-1\}.
	\]
	The analogous characterisation is false without the assumption that $a$ is allowed to take more than $2$ values.
\end{remark}

\subsection{Automatic sequences}
In this section we briefly discuss the basics of the theory of automatic sequences; for extensive background see \cite{AlloucheShallit-book}.
For $i \in \Sigma_k$, we define the operators $\Lambda_i$ acting on sequences $\NN_0 \to \CC$ by
\begin{equation}
\label{eq:def-of-Lambda}
\Lambda_i a (n) = a(kn+i). 
\end{equation}
The \emph{$k$-kernel} of a sequence $a \colon \NN_0 \to \CC$ consists of all sequences $\NN_0 \to \CC$ that can be obtained from $a$ by repeated application of $\Lambda_i$'s, that is, 
\begin{equation}
\label{eq:def-of-kernel}
\cN_k(a) = \set{ n \mapsto a(k^\alpha n + r)}{ \alpha,r \in \NN_0, 0 \leq r < k^\alpha} \subset \CC^{\NN_0}. 
\end{equation}
It will also be convenient to introduce the shift operator $S$ acting on sequences $\NN_0 \to \CC$ by $Sa(n) = a(n+1)$. For future reference, we record how the introduced operators interact.
\begin{lemma}\label{lem:ops-compose}
	For each $0 \leq i < k-1$ we have $\Lambda_i S = \Lambda_{i+1}$. Moreover, $\Lambda_{k-1} S = S \Lambda_1$. 
\end{lemma}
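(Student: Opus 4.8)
The plan is to verify both identities directly: apply each composite operator to an arbitrary sequence $a\colon \NN_0 \to \CC$ and compare the resulting values at an arbitrary index $n \in \NN_0$. Nothing beyond the definition \eqref{eq:def-of-Lambda} of $\Lambda_i$ and the definition $Sa(n) = a(n+1)$ of $S$ is required, so the whole argument is a one-line substitution in each case.

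For the first identity, fix $i$ with $0 \leq i < k-1$. Unwinding the definitions, $(\Lambda_i S a)(n) = (Sa)(kn+i) = a(kn+i+1)$. Since $0 \leq i < k-1$ we have $1 \leq i+1 \leq k-1$, so $i+1 \in \Sigma_k$, the operator $\Lambda_{i+1}$ is legitimately defined, and $(\Lambda_{i+1}a)(n) = a(kn+(i+1))$. The two expressions agree for every $n$, hence $\Lambda_i S = \Lambda_{i+1}$ as operators on $\CC^{\NN_0}$. The restriction $i < k-1$ is essential: for $i = k-1$ the index $i+1 = k$ lies outside $\Sigma_k$, and adding $1$ to the last digit then forces a carry — which is exactly the boundary case that the second identity exists to handle.

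For the second identity, the same computation gives $(\Lambda_{k-1}Sa)(n) = (Sa)(kn+k-1) = a(kn+k) = a(k(n+1))$, while $(S\Lambda_0 a)(n) = (\Lambda_0 a)(n+1) = a(k(n+1))$; these coincide for every $n$, so $\Lambda_{k-1}S = S\Lambda_0$. I should flag that the statement as printed reads $\Lambda_{k-1}S = S\Lambda_1$, but $(S\Lambda_1 a)(n) = a(k(n+1)+1)$, which does not in general equal $a(k(n+1))$; I therefore take the intended identity to be $\Lambda_{k-1}S = S\Lambda_0$, and that is what the computation above establishes. As everything reduces to substitution there is no genuine obstacle; the only step demanding a moment's care is the bookkeeping at the boundary digit, namely recognising $kn+k = k(n+1)$, which is precisely the fact that the successor map descends through division by $k$ up to a carry, and this is what turns a shift applied before $\Lambda_{k-1}$ into a shift applied after $\Lambda_0$.
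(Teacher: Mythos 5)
Your computation is correct and is exactly the ``direct computation'' the paper's proof invokes: $(\Lambda_i S a)(n) = (Sa)(kn+i) = a(kn+i+1) = (\Lambda_{i+1}a)(n)$ for $0 \leq i < k-1$, and $(\Lambda_{k-1}Sa)(n) = a(kn+k) = a(k(n+1)) = (S\Lambda_0 a)(n)$. You are also right to flag the second identity as printed: since $(S\Lambda_1 a)(n) = a(k(n+1)+1) \neq a(k(n+1))$ in general, the ``$\Lambda_1$'' must be a typo for ``$\Lambda_0$'', and the corrected identity $\Lambda_{k-1}S = S\Lambda_0$ is precisely the form needed where the lemma is later applied (the $e=1$ case in the proof of Lemma \ref{lem:rec-PC}).
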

\begin{proof}
	Direct computation.
\end{proof}

A sequence $a$ is $k$-automatic (or just automatic, if $k$ is clear from the context) if and only if $\cN_k(f)$ is finite. Many equivalent definitions of automaticity are possible, and we briefly mention some of them to provide context. Details and terminology can be found in \cite{AlloucheShallit-book}. As the name suggests, a sequence is $k$-automatic if and only if it is computed by a deterministic finite $k$-automaton with output. Any fixed point of a $k$-uniform morphism is $k$-automatic, and conversely any $k$-automatic sequence can be obtained as a letter-to-letter coding of a fixed point of a $k$-uniform morphism. When $k$ is a prime and $a$ is a sequence taking values in a finite field $\mathbb{F}$ of characteristic $k$, yet another criterion due to Christol shows that $a$ is automatic if and only if the associated formal power series is algebraic over $\mathbb{F}$. 

It is a well-known fact that the class of $k$-automatic complex-valued sequences is closed under addition, multiplication, conjugation and restriction to subsequences, that is, if $a,b \colon \NN_0 \to \CC$ are $k$-automatic, then so are $n \mapsto a(n) + b(n)$, $n \mapsto a(n) \cdot b(n)$, $n \mapsto \bar a(n)$ and $n \mapsto a(An+B)$ for any $A \in \NN$, $B \in \NN_0$. More generally, if $a,b \colon \NN_0 \to \CC$ are $k$-automatic and $h \colon \CC^2 \to \CC$ is arbitrary, then the sequence $n \mapsto h(a(n),b(n))$ is $k$-automatic.

{\color{final}
For a sequence $a \colon \NN_0 \to \CC$, we define the mean and the logarithmic mean:
\begin{equation}\label{eq:def-of-M}
\M{a} := \lim_{N \to \infty} \frac{1}{N} \sum_{n=0}^{N-1} a(n), \ \qquad
\Mlog{a} := \lim_{N \to \infty} \frac{1}{\log N} \sum_{n=0}^{N-1} \frac{1}{n+1}a(n).
\end{equation}
We note that $\M{a}$ are not guaranteed to exist, even when the sequence $a$ is automatic. (Consider, for instance, the sequence defined by $a(0) = 0$ and $a(n) = (-1)^\alpha$ if $k^\alpha \leq n < k^{\alpha+1}$, $\alpha \in \NN_0$.) On the other hand, we have the following positive result for logarithmic means.
\begin{theorem}[{\cite[Thm.{} 8.4.8]{AlloucheShallit-book}}]\label{thm:log-mean-exists}
	Let $a \colon \NN_0 \to \CC$ be a $k$-automatic sequence. Then $\Mlog{a}$ exists.
\end{theorem}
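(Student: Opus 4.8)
The plan is to reduce the existence of $\Mlog{a}$ to a self-similar linear recursion over the finite $k$-kernel $\cN_k(a)$. Since $a$ is $k$-automatic it takes only finitely many values, so $M:=\sup_{n}\abs{a(n)}<\infty$, and $\cN_k(a)$ is finite and closed under each operator $\Lambda_j$. For $b\in\cN_k(a)$ and $N\in\NN_0$ put $L_N(b):=\sum_{n=0}^{k^{N}-1} b(n)/(n+1)$. Splitting the index as $n=km+j$ with $j\in\Sigma_k$ and $0\le m<k^{N-1}$, and using $b(km+j)=(\Lambda_j b)(m)$, one obtains
\[
	L_N(b)=\sum_{j\in\Sigma_k}\sum_{m=0}^{k^{N-1}-1}\frac{(\Lambda_j b)(m)}{km+j+1}
	=\frac1k\sum_{j\in\Sigma_k}L_{N-1}(\Lambda_j b)+E_N(b),
\]
where $E_N(b)=\sum_{j\in\Sigma_k}\sum_{m=0}^{k^{N-1}-1}(\Lambda_j b)(m)\bigl(\tfrac{1}{km+j+1}-\tfrac{1}{k(m+1)}\bigr)$. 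The key point is that the correction $\tfrac{1}{km+j+1}-\tfrac{1}{k(m+1)}$ is $O(1/(m+1)^2)$, so $E_N(b)$ is not merely bounded but actually converges as $N\to\infty$ to a limit $E_\infty(b)$, namely the corresponding absolutely convergent series over all $m\ge 0$.

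Next I pass to vectors. Enumerating $\cN_k(a)=\{b_1,\dots,b_r\}$ and setting $v_N:=(L_N(b_1),\dots,L_N(b_r))^{\mathsf T}$, the recursion becomes $v_N=Pv_{N-1}+E_N$, where $P$ is the row-stochastic $r\times r$ matrix with $P_{i\ell}=\tfrac1k\#\{j\in\Sigma_k:\Lambda_j b_i=b_\ell\}$ and $E_N\to E_\infty$ in $\CC^{r}$. Telescoping gives $v_N=P^{N}v_0+\sum_{t=0}^{N-1}P^{t}E_{N-t}$. Now $\tfrac1N P^{N}v_0\to 0$ since $\sup_t\norm{P^{t}}<\infty$; writing $E_{N-t}=E_\infty+\delta_{N-t}$ with $\delta_s\to 0$, the contribution $\tfrac1N\sum_{t=0}^{N-1}P^{t}\delta_{N-t}$ tends to $0$ because $\tfrac1N\sum_{s=1}^{N}\norm{\delta_s}\to 0$; and $\tfrac1N\sum_{t=0}^{N-1}P^{t}E_\infty\to\Pi E_\infty$, where $\Pi=\lim_N\tfrac1N\sum_{t=0}^{N-1}P^{t}$ is the ergodic projection associated with $P$, whose existence is the standard Ces\`aro mean theorem for stochastic matrices. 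Hence $v_N/N$ converges, and in particular $L_N(a)/N\to\lambda$ for some $\lambda\in\CC$, since $a\in\cN_k(a)$.

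Finally I transfer this to arbitrary cutoffs. Given $N'\ge k$, choose $N$ with $k^{N}\le N'<k^{N+1}$. Since $\abs{a}\le M$, the tail $\sum_{k^{N}\le n<N'}a(n)/(n+1)$ has absolute value at most $M\sum_{k^{N}\le n<k^{N+1}}1/(n+1)=M\log k+o(1)=O(1)$, so $\sum_{n=0}^{N'-1}a(n)/(n+1)=L_N(a)+O(1)$; and $\log N'=N\log k+O(1)$. Therefore
\[
	\frac{1}{\log N'}\sum_{n=0}^{N'-1}\frac{a(n)}{n+1}=\frac{\lambda N+o(N)}{N\log k+O(1)}\longrightarrow\frac{\lambda}{\log k}\qquad(N'\to\infty),
\]
so $\Mlog{a}$ exists and equals $\lambda/\log k$. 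The two steps I expect to demand the most care are (i) checking that $E_N(b)$ genuinely \emph{converges} rather than merely staying bounded — this is what makes the linear recursion usable — and (ii) the appeal to the ergodic projection: a stochastic matrix can have several eigenvalues on the unit circle (reflecting periodicity of the automaton's transition digraph), so $P^{t}$ itself need not converge, and it is precisely the Ces\`aro averaging built into the \emph{logarithmic} mean that salvages the argument.
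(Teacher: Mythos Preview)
The paper does not prove this theorem at all: it is quoted verbatim as Theorem~8.4.8 from \cite{AlloucheShallit-book}, so there is no ``paper's own proof'' to compare against. Your argument is therefore an independent proof, and it is correct. The decomposition into residues modulo $k$ with the estimate $\tfrac{1}{km+j+1}=\tfrac{1}{k(m+1)}+O\bigl(\tfrac{1}{(m+1)^2}\bigr)$ is exactly the manoeuvre the paper later uses in Lemma~\ref{lem:gammalog-recurrence}, so your approach is very much in the spirit of the surrounding material. Your observation that $P^t$ stays bounded in operator norm (since $P^t$ is again row-stochastic) is what rules out nontrivial Jordan blocks at unit-modulus eigenvalues and justifies the existence of the ergodic projection $\Pi$; and your remark that only the Ces\`aro average of $P^t$ need converge, not $P^t$ itself, correctly isolates why the \emph{logarithmic} mean succeeds where the ordinary mean can fail. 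One cosmetic point: in step~(11) you might state explicitly that $L_N(a)=\lambda N+o(N)$ before dividing, since writing ``$\lambda N+o(N)$'' in the displayed fraction presupposes this reformulation of $L_N(a)/N\to\lambda$.
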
 
We also record the fact that that if $a \colon \NN_0 \to \CC$ is a bounded sequence and $\M{a}$ exists then $\Mlog{a}$ also exists and $\Mlog{a} = \M{a}$, see e.g.{} \cite[Prop.{} 8.4.4 (a)]{AlloucheShallit-book}.
}

\Pc{} sequences are, unsurprisingly, automatic. In fact, we have the following characterisation of \pc{} sequences in terms of their $k$-kernels (cf.{} \cite[Lemma 2.2]{ZhengPengKamae-2018}).
\begin{lemma}\label{lem:char-of-PC}\color{final}
	Let $a \colon \NN_0 \to \{+1,-1\}$ be a sequence with $a(0) = +1$ and $\ell \in \NN$. Then the following conditions are equivalent:
	\begin{enumerate}
	\item\label{it:631:A} There exists a set $A \subset \Sigma_k^\ell \setminus \{\mathtt{0}^\ell \}$ with $a = a_A$.
	\item \label{it:631:B} For each $b \in \cN_k(a)$, the sequence $b/a$ has period $k^{\ell-1}$.\end{enumerate}
\end{lemma}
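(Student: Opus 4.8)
The plan is to prove the equivalence by unwinding what the two conditions say about the values $a(k^\alpha n + r)$ and comparing shifted copies of the sequence. I will use throughout the additivity property recorded in Section~\ref{ssec:PatternCountingDefs}: for $v \in \Sigma_k^* \setminus \{\mathtt{0}\}^*$ and all sufficiently large $\alpha$, one has $\#(v, k^\alpha m + n) = \#(v, k^\alpha m) + \#(v,n)$, and more precisely that $\#(v, k^\alpha m + n)$ depends, for $\alpha \geq \abs{v}-1$ and $n < k^\alpha$, only on $n$ together with the last $\abs{v}-1$ digits of $k^\alpha m + n$ that sit at positions $\alpha-1, \dots, \alpha-\abs{v}+1$ --- i.e.\ on the top digits of $n$ when $n$ is written with exactly $\alpha$ digits.

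For the direction \eqref{it:631:A} $\Rightarrow$ \eqref{it:631:B}, fix $b \in \cN_k(a)$, say $b(n) = a(k^\alpha n + r)$ with $0 \leq r < k^\alpha$; by enlarging $\alpha$ we may assume $\alpha \geq \ell - 1$. For $n, n'$ with $n \equiv n' \pmod{k^{\ell-1}}$ I want $b(n)/a(n) = b(n')/a(n')$, equivalently $\#(A, k^\alpha n + r) - \#(A, n) \equiv \#(A, k^\alpha n' + r) - \#(A, n') \pmod 2$. Writing $n$ in base $k$ and splitting the occurrences of a pattern $v \in A$ inside the word for $k^\alpha n + r$ into those lying entirely within the top block (the digits of $n$), those lying entirely within the bottom block (the digits of $\mathtt{0}^{\alpha - \abs{(r)_k}}(r)_k$), and those straddling the junction, the bottom-block count is a constant independent of $n$ and cancels, the straddling count depends only on the lowest $\abs{v}-1 \leq \ell-1$ digits of $n$ hence only on $n \bmod k^{\ell-1}$, and the top-block count is exactly $\#(v, n)$ (here the leading-zero padding convention matters, and this is where I need $\abs{v} = \ell$ so that the padding lines up). Summing over $v \in A$ and taking parities, the dependence on $n$ cancels against $\#(A,n)$ except for the straddling term, which is $k^{\ell-1}$-periodic; hence $b/a$ has period $k^{\ell-1}$.

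For the converse \eqref{it:631:B} $\Rightarrow$ \eqref{it:631:A}, I will build $A$ digit by digit, or rather recover it from the "straddling data" above. By Lemma~\ref{lem:PC-representation} it suffices to produce \emph{some} admissible $A$ with $a = a_A$ and then push it to constant length $\ell$; but I must also check the constant-length representative does not secretly need length $> \ell$, which is exactly what periodicity $k^{\ell-1}$ (rather than a larger power) buys. Concretely: applying \eqref{it:631:B} with $b = \Lambda_i a$ for each $i \in \Sigma_k$ shows that the function $n \mapsto a(kn+i)/a(n)$ is $k^{\ell-1}$-periodic; iterating and using $a(0)=+1$ lets me reconstruct $a(n)$ as a telescoping product $\prod_{j} \big(a(\text{prefix}_{j+1})/a(\text{prefix}_j)\big)$ over the digits of $n$, where each factor depends only on a window of $\ell$ consecutive digits. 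Encoding "which windows of length $\ell$ contribute a $-1$" gives a set $A \subset \Sigma_k^\ell$, and one checks $\#(A, n) \equiv$ the exponent in this product $\pmod 2$, so $a = a_A$; finally $a(0) = +1$ forces $\mathtt{0}^\ell \notin A$.

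The main obstacle is the bookkeeping in the straddling term: making precise, with the leading-zero padding convention, exactly which digits of $n$ a length-$\ell$ pattern occurrence can involve when $n$ is read as the top block of $k^\alpha n + r$, and verifying that this contributes a genuinely $k^{\ell-1}$-periodic (not merely eventually periodic) correction. A secondary subtlety is the converse: one must confirm that $k^{\ell-1}$-periodicity of all the ratios $b/a$ is enough to conclude the \emph{reconstructed} pattern set lives in $\Sigma_k^\ell$ and not in $\Sigma_k^{\ell'}$ for some $\ell' > \ell$ --- i.e.\ that no longer-range digit interaction is hiding --- which again comes down to the additivity identity and the fact that a $k^{\ell-1}$-periodic function is determined by, and only by, the last $\ell-1$ digits.
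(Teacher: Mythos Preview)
Your forward direction \eqref{it:631:A} $\Rightarrow$ \eqref{it:631:B} is correct and close in spirit to the paper's, though the paper organises it more economically: rather than analysing a general $b(n) = a(k^\alpha n + r)$ via a top/bottom/straddling decomposition, the paper only computes $\Lambda_i a/a$ (showing it equals $-1$ exactly when the length-$\ell$ suffix of $\mathtt{0}^{\ell-1}(kn+i)_k$ lies in $A$, hence is $k^{\ell-1}$-periodic) and then iterates, using that $\Lambda_j$ sends $k^{\ell-1}$-periodic sequences to $k^{\ell-2}$-periodic ones. This avoids the bookkeeping you flag as the ``main obstacle''. Incidentally, your ``by enlarging $\alpha$'' remark is unnecessary: the decomposition works uniformly for all $\alpha \geq 1$.

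Your converse \eqref{it:631:B} $\Rightarrow$ \eqref{it:631:A} is genuinely different from the paper's and in some ways more informative. The paper does not construct $A$ at all: it observes instead that the number of sequences satisfying \eqref{it:631:B} is exactly $2^{k^\ell - 1}$ (each such sequence being determined by the $k$-tuple of $k^{\ell-1}$-periodic $\{\pm 1\}$-valued sequences $h_i = \Lambda_i a / a$, subject only to the normalisation $h_0(0) = +1$), which equals the number of subsets of $\Sigma_k^\ell \setminus \{\mathtt{0}^\ell\}$; since by the forward direction and the uniqueness in Lemma~\ref{lem:PC-representation} distinct subsets give distinct sequences satisfying \eqref{it:631:B}, the two classes coincide by cardinality. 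Your constructive approach --- reading off $A$ as the set of length-$\ell$ windows contributing $-1$ to the telescoping product --- also works and has the advantage of exhibiting $A$ explicitly, but the counting argument is shorter and sidesteps the ``secondary subtlety'' you mention entirely (there is nothing to check about length once the bijection is established). Note also that your detour through Lemma~\ref{lem:PC-representation} is superfluous: your construction already lands in $\Sigma_k^\ell$ directly.
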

\begin{proof}\color{final}
	\eqref{it:631:A} $\Rightarrow$ \eqref{it:631:B}: Let $i \in \Sigma_k$ and $n \in \NN_0$. Then each factor of $(n)_k$ is also a factor of $(kn+i)_k = (n)_ki$ and conversely each factor of $(kn+i)_k$ that is not a suffix is a factor of $(n)_k$. More precisely, for each $v \in \Sigma_k^* \setminus \{\mathtt 0\}^*$ we have
\[	
	\#(v,kn+i) = 
	\begin{cases} 
	\#(v,n) + 1 &\text{if } v \text{ is a suffix of }\mathtt{0}^{\abs{v}-1}(kn+i)_k,\\
	\#(v,n) &\text{ otherwise.}
	\end{cases}
\]
Consequently, $\Lambda_i a(n)/a(n) = -1$ if the suffix of $\mathtt 0^{\ell-1}(kn+i)_k$ of length $\ell$ belongs to $A$ and $\Lambda_i a(n)/a(n) = +1$ otherwise. It follows that $h_i := \Lambda_i a/a$ is $k^{\ell-1}$-periodic. Since for each $i \in \Sigma_i$, the operator $\Lambda_i$ maps $k^{\ell-1}$-periodic sequences to $k^{\ell-2}$-periodic sequences (or constant sequences, if $\ell = 1$), it follows that all sequences in the $k$-kernel of $a$ take the form $a \cdot h$ where $h$ is $k^{\ell-1}$-periodic.

	\eqref{it:631:B} $\Rightarrow$ \eqref{it:631:A}: For each $i \in \Sigma_k$, let $h_i := \Lambda_i a/a$. Note that $h_0(0) = a(0)/a(0) = +1$ and that the sequences $h_i$ take values in $\{+1,-1\}$. Conversely, given any $k$-tuple of $k^{\ell-1}$-periodic $\{+1,-1\}$-valued sequences $h_i'$ ($i \in \Sigma_k$) with $h_0'(0) = +1$, we can inductively construct a sequence $a \colon \NN_0 \to \{+1,-1\}$ with $a(0) = +1$ and $h_i = h_i'$ for all $i \in \Sigma_k$. Hence, the number of sequences that satisfy \eqref{it:631:B} is 
	\[
		\bra{ 2^{k^{\ell-1}} }^{k-1} \cdot 2^{k^{\ell-1}-1} = 2^{k^\ell -1}.
	\]
On the other hand, the number of subsets of $ \Sigma_k^\ell \setminus \{\mathtt{0}^\ell \}$ is also equal to $2^{k^\ell-1}$, and by the previously proven implication and Lemma \ref{lem:PC-representation}, each of these choices gives rise to a different sequence satisfying \eqref{it:631:B}. It follows that each sequence satisfying \eqref{it:631:B} has a representation as in \eqref{it:631:A}.
\end{proof}

{\color{final}
\subsection{Regular sequences}
The class of $k$-regular sequences was introduced by Allouche and Shallit \cite{AlloucheShallit-1992,AlloucheShallit-2003} as a natural generalization of the class of $k$-automatic sequences.

Let $R$ be a ring contained in $\CC$. A sequence $f \colon \NN_0 \to \CC$ is \emph{$(R,k)$-regular} if $\cN_k(f)$ is contained in a finitely generated $R$-module. Note that if $R' \subset \CC$ is another ring and $R \subset R'$ then any $(R,k)$-regular sequence is also $(R',k)$-regular. In our context, the choice of the ring $R$ does not play a major role: For the sake of brevity, we set $R = \QQ$ throughout the paper and omit $R$ from the notation. (Strictly speaking we could have worked with $R = \ZZ[1/k]$, making some results marginally stronger.) The fact that the ring under consideration is in fact a field leads to a slightly more succinct definition of regularity: A sequence $f \colon \NN_0 \to \CC$ is $k$-regular if and only if its $k$-kernel spans a finite dimensional vector space over $\QQ$: $\dim \linspan_{\QQ} \cN_k(f) < \infty$.

The class of $k$-regular sequences enjoys closure properties analogous to $k$-automatic sequences: If $f,g \colon \NN_0 \to \CC$ are $k$-regular, then so are $n \mapsto f(n) + g(n)$, $n \mapsto f(n) + g(n)$, $n \mapsto \bar f(n)$, $n \mapsto z f(n)$ ($z \in \CC$) and $n \mapsto f(An+B)$ ($A \in \NN,\ B \in \NN_0$). In particular, $k$-regular sequences $\NN_0 \to \CC$ form an involutive algebra over $\CC$ (with addition and multiplication defined pointwise). 

We will need a method to verify if a given regular sequence is identically zero. The following lemma provides a simple criterion.

\begin{lemma}\label{lem:zero-criterion}\color{final}
	Let $f \colon \NN_0 \to \CC$ be $k$-regular and non-zero. Then there exists $g \in \cN_k(f)$ with $g(0) \neq 0$.
\end{lemma}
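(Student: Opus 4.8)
The plan is to argue by contradiction: suppose $f$ is non-zero but every $g \in \cN_k(f)$ vanishes at $0$. First I would observe that $\cN_k(f)$ is closed under the kernel operators $\Lambda_i$, since $\Lambda_i$ maps $n \mapsto f(k^\alpha n + r)$ to $n \mapsto f(k^{\alpha+1} n + kr + i)$, which again lies in $\cN_k(f)$; hence the subspace $V := \linspan_\QQ \cN_k(f) \subset \CC^{\NN_0}$ is $\Lambda_i$-invariant for all $i \in \Sigma_k$, and it is finite-dimensional by $k$-regularity.

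Next, consider the evaluation-at-zero functional $\mathrm{ev}_0 \colon V \to \CC$, $g \mapsto g(0)$. Our assumption is precisely that $\mathrm{ev}_0$ vanishes on the generating set $\cN_k(f)$ of $V$, hence $\mathrm{ev}_0 = 0$ on all of $V$. The key point is then that the $\Lambda_i$-invariance of $V$, together with $\mathrm{ev}_0|_V = 0$, forces every $g \in V$ to be identically zero. Indeed, for any $g \in V$ and any $n \in \NN_0$, write $(n)_k = d_{\alpha-1} d_{\alpha-2} \cdots d_1 d_0$ (so $n = \sum_{j} d_j k^j$); then $g(n) = \Lambda_{d_0}\Lambda_{d_1}\cdots\Lambda_{d_{\alpha-1}} g \,(0)$ by a straightforward induction using \eqref{eq:def-of-Lambda} (each $\Lambda_i$ peels off one base-$k$ digit). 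Since $V$ is closed under each $\Lambda_i$, the sequence $\Lambda_{d_0}\cdots\Lambda_{d_{\alpha-1}} g$ lies in $V$, so its value at $0$ is zero by $\mathrm{ev}_0|_V = 0$; thus $g(n) = 0$. As this holds for all $n$, we get $g \equiv 0$, and in particular $f \equiv 0$, contradicting the hypothesis.

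I would write the digit-peeling step carefully: one checks by induction on $\alpha$ that if $0 \le m < k^\alpha$ has base-$k$ digits (padded to length $\alpha$) $d_{\alpha-1}\cdots d_0$, then $f(m) = (\Lambda_{d_0}\Lambda_{d_1}\cdots\Lambda_{d_{\alpha-1}} f)(0)$, and more generally the same identity holds with $f$ replaced by any sequence; the base case $\alpha = 0$ is $f(0) = f(0)$ and the inductive step uses $f(m) = \Lambda_{d_0} f'(\lfloor m/k\rfloor)$ with $f' = $ the appropriate kernel element. Applying this with $f$ replaced by an arbitrary $g \in \cN_k(f) \subset V$ and using closure of $V$ under the $\Lambda_i$ gives the conclusion. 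The only mild subtlety — and the step I expect to require the most care — is bookkeeping the order of composition of the $\Lambda_i$'s and the leading-zero padding, so that one genuinely stays inside $\cN_k(f)$ (not merely inside its span) at each intermediate stage, or alternatively noting that it suffices to stay inside $V = \linspan_\QQ \cN_k(f)$, on which $\mathrm{ev}_0$ still vanishes. Either way there is no real obstacle; the lemma is essentially the observation that a $k$-regular sequence is determined by the values at $0$ of all elements of (the span of) its kernel.
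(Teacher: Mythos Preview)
Your proof is correct and follows essentially the same contrapositive argument as the paper: assume every kernel element vanishes at $0$, then peel off base-$k$ digits (equivalently, induct on $\alpha$ with $n < k^\alpha$) to conclude that every $g \in \cN_k(f)$, and in particular $f$, is identically zero. One small slip worth fixing: $\Lambda_i$ sends $n \mapsto f(k^\alpha n + r)$ to $n \mapsto f(k^{\alpha+1} n + k^\alpha i + r)$, not to $n \mapsto f(k^{\alpha+1} n + kr + i)$, though the conclusion that this lies in $\cN_k(f)$ is unaffected.
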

\begin{proof}\color{final}
	For the sake of contradiction, suppose that $g(0) = 0$ for all $g \in \cN_k(f)$. We show by induction on $\alpha$ that $g(n) = 0$ for all $g \in \cN_k(f)$ and $0 \leq n < k^\alpha$. If $\alpha = 0$ then $n = 0$, so there is nothing to prove. If $\alpha > 0$ and $n < k^\alpha$ then $n = kn' + i$ where $i \in \Sigma_k$ and $n' < k^{\alpha-1}$. Hence, $g(n) = \Lambda_i g(n') = 0$ by the inductive assumption.
\end{proof}
}

\subsection{Invariant sequences}\label{ssec:Invariant}
We will say that a sequence $a \colon \NN_0 \to \CC$ is \emph{\sinv{}} if $a(kn) = a(n)$ for all $n \in \NN_0$. The \sinv{} \pc{} sequences admit a simple description. Following the convention in Section \ref{ssec:PatternCountingDefs}, we will say that a set $A \subset \Sigma_k^*$ has no trailing zeros if $\mathtt{0}$ is not a suffix of any $v \in A$. 

\begin{lemma}\label{lem:PC-crit-invariant}\color{final}
	Let $a \colon \NN_0 \to \{+1,-1\}$ be a \pc{} sequence. Then $a$ is \sinv{} if and only if there exists a set $A \subset \Sigma_k^*$ that has no leading and no trailing zeros and such that $a = a_A$.
\end{lemma}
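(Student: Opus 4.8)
The plan is to prove the two implications separately, using Lemma~\ref{lem:PC-representation} to fix a convenient representation for the forward direction and the suffix-counting identity from the proof of Lemma~\ref{lem:char-of-PC} as essentially the only computational tool. Recall that identity: for $w \in \Sigma_k^* \setminus \{\mathtt 0\}^*$ and $n \in \NN_0$ one has $\#(w,kn) = \#(w,n)$ unless $w$ is a suffix of $\mathtt 0^{\abs w-1}(kn)_k$, in which case $\#(w,kn) = \#(w,n)+1$; moreover $(kn)_k = (n)_k\mathtt 0$ for $n \ge 1$.

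For the ``if'' direction I would argue directly. Suppose $a = a_A$ with $A$ admissible and having no leading and no trailing zeros. Each $v \in A$ is nonempty (admissibility) and ends in a nonzero digit (no trailing zeros), whereas $\mathtt 0^{\abs v-1}(kn)_k = \mathtt 0^{\abs v-1}(n)_k\mathtt 0$ ends in $\mathtt 0$; so $v$ can never be the offending suffix, and the identity gives $\#(v,kn) = \#(v,n)$ for every $n$ (the case $n=0$ being trivial). Summing over $v \in A$ yields $\#(A,kn) = \#(A,n)$, hence $a(kn)=a(n)$ for all $n$, i.e.\ $a$ is \sinv{}.

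For the ``only if'' direction I would take $B$ to be the unique admissible representation of $a$ with no leading zeros supplied by Lemma~\ref{lem:PC-representation}, and show that dilation-invariance forces $B$ to have no trailing zeros; then $A := B$ does the job. Arguing by contradiction, assume some word of $B$ ends in $\mathtt 0$, and choose such a word $u = v\mathtt 0 \in B$ of minimal length. Admissibility forces $v \ne \epsilon$, and the no-leading-zeros property of $B$ forces $v$ to have no leading zero, so $(n)_k = v$ with $n := [v]_k \ge 1$. The crux is then to compute $a(kn)/a(n) = (-1)^{\#(B,kn)-\#(B,n)}$ by determining which $w \in B$ are suffixes of $\mathtt 0^{\abs w-1}(n)_k\mathtt 0 = \mathtt 0^{\abs w-1}v\mathtt 0$: such a $w$ ends in $\mathtt 0$, so $\abs w \ge \abs v+1$ by minimality; and then, since $\mathtt 0^{\abs w-1}v\mathtt 0$ has length $\abs w+\abs v$ with $\abs w-1 \ge \abs v$, its length-$\abs w$ suffix is $\mathtt 0^{\abs w-1-\abs v}v\mathtt 0$, which can belong to the leading-zero-free set $B$ only when $\abs w = \abs v+1$, i.e.\ $w = v\mathtt 0 = u$. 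Thus exactly one word of $B$ is "created", $\#(B,kn)-\#(B,n) = 1$, hence $a(kn) = -a(n)$, contradicting dilation-invariance.

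I expect the only delicate point to be this last piece of combinatorial bookkeeping: pinning down $u$ as the \emph{unique} element of $B$ occurring as the relevant length-$\abs w$ suffix. Both the minimal-length choice of $u$ and the absence of leading zeros in $B$ are genuinely used there, and the suffix-length computation should be written out explicitly rather than waved through. Everything else reduces cleanly to the identities already established in the excerpt.
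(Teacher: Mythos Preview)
Your proposal is correct and follows essentially the same route as the paper. The paper also fixes the leading-zero-free representation from Lemma~\ref{lem:PC-representation}, picks a shortest pattern ending in $\mathtt{0}$ (written there as $u\mathtt{0}$, corresponding to your $v\mathtt{0}$), and compares $a([u\mathtt{0}]_k)$ with $a([u]_k)$ by the same case split on whether a pattern ends in a nonzero digit, ends in $\mathtt{0}$ but is too long to be a factor, or equals $u\mathtt{0}$; your phrasing via the suffix-counting identity from Lemma~\ref{lem:char-of-PC} is just a different packaging of the same count.
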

\begin{proof}\color{good}
	If $A \subset \Sigma_k^*$ has no trailing zeros then $\#(v,n) = \#(v,kn)$ for all $v \in A$ and $n \in \NN_0$, so $a_A$ is \sinv{}.
	
	Conversely, suppose that $a$ is \sinv{} and let $A \subset \Sigma_k^*$ be a set of patterns without leading zeros such that $a = a_A$, which exists by Lemma \ref{lem:PC-representation}. Suppose for the sake of contradiction that $A$ contains a pattern ending with $\mathtt{0}$, say $u \mathtt{0} \in A$ for some $u \in \Sigma_k^*$, and let $u$ be as short as possible. Since $a$ is \sinv{}, we have
\begin{equation}\label{eq:769:1}
		a([u\mathtt{0}]_k) = a([u]_k).
\end{equation}
On the other hand, each $v \in A$ either ends in a non-zero digit (in which case $\#(v,u\mathtt{0}) = \#(v,u)$), or ends in $\mathtt{0}$ and is not a factor or $u\mathtt{0}$  (in which case $\#(v,u\mathtt{0}) = \#(v,u) = 0$), or is equal to $u\mathtt{0}$ (in which case $\#(v,u\mathtt{0}) = 1$ and $\#(v,u) = 0$). As a consequence,
\begin{equation}\label{eq:769:2}
	a([u\mathtt{0}]_k) = (-1)^{\#(A,u\mathtt{0})} = (-1)^{\#(A,u)+1}= -a([u]_k),
\end{equation}
which contradicts \eqref{eq:769:1} and finishes the argument.
\end{proof}

We also record the fact that every \pc{} sequence is the product of a \sinv{} sequence and a periodic sequence. As we will see (cf.{} Remark \ref{rmk:final}) the introduction of the multiplicative factor affects the correlation coefficients in a relatively simple way, which motivates our focus on \sinv{} sequences.

\begin{lemma}\label{lem:PC-rep-invariant}\color{good}
	Let $\ell \in \NN_0$ and let $a \colon \NN_0 \to \{+1,-1\}$ be a \pc{} sequence of length $\leq \ell$. Then there exist a unique \sinv{} \pc{} sequence of length $\leq \ell$ such that $a/b$ is $k^{\ell-1}$-periodic.
\end{lemma}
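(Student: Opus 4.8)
The plan is to construct $b$ explicitly and then verify uniqueness. By Lemma \ref{lem:PC-representation}, write $a = a_C$ where $C \subset \Sigma_k^\ell$ is the unique admissible set of patterns of constant length $\ell$. Partition the patterns in $C$ according to how many trailing zeros they have: for $v \in \Sigma_k^{\ell}$, write $v = u\mathtt 0^j$ where $u$ does not end in $\mathtt 0$ (or $u = \epsilon$, but that case is excluded since $C$ is admissible). Now replace each such $v = u\mathtt 0^j \in C$ by the pattern $\mathtt 0^j u \in \Sigma_k^{\ell}$ obtained by moving the trailing zeros to the front; let $C'$ be the resulting set of patterns of length $\ell$ and set $b = a_{C'}$. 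Since no pattern in $C'$ ends in $\mathtt 0$ (each ends with the last digit of the corresponding $u$, which is nonzero), Lemma \ref{lem:PC-crit-invariant} gives that $b$ is \sinv{}, and it clearly has length $\leq \ell$.

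Next I would show $a/b$ is $k^{\ell-1}$-periodic. We have $a/b = a_C \cdot a_{C'} = a_{C \oplus C'}$ by Lemma \ref{lem:PC-is-closed}. The set $C \oplus C'$ consists of patterns of length $\ell$ only; and crucially, the patterns $v \in C$ with $j = 0$ trailing zeros are fixed by the operation $v \mapsto \mathtt 0^j u$, so they cancel in $C \oplus C'$, which therefore consists only of patterns having at least one leading zero or at least one trailing zero. For any such pattern $w$ of length $\ell$ that starts or ends with $\mathtt 0$, the function $n \mapsto \#(w,n) \bmod 2$ depends only on $n \bmod k^{\ell-1}$: indeed, a pattern of length $\ell$ beginning with $\mathtt 0$ occurs in $\mathtt 0^{\ell-1}(n)_k$ only at positions determined by the last $\ell - 1$ digits of $n$ (because earlier occurrences would require the digit before it to equal $\mathtt 0$ which is already forced, so its occurrences are in bijection with occurrences of the shorter pattern obtained by deleting the leading $\mathtt 0$, which has length $\ell - 1$); and a pattern ending with $\mathtt 0$ of length $\ell$ never has its final position among the last $\ell-1$ digits contribute anything new — more carefully, one uses that $\#(w, k^{\ell-1}m + r) = \#(w, k^{\ell-1}m) + \#(w,r)$ for patterns of length $\le \ell$ and then checks the high part contributes an even count, or simply invokes Lemma \ref{lem:char-of-PC}: any pattern-counting sequence of length $\le \ell$ differs from a constant by a $k^{\ell-1}$-periodic factor along the kernel, and here the relevant check reduces to showing $a_{\{w\}}$ with $w$ starting or ending in $\mathtt 0$ is itself $k^{\ell-1}$-periodic. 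I would phrase this last point via \eqref{eq:161:1}: if $w = \mathtt 0 w'$ with $|w'| = \ell - 1$ then $\#(w', n) = \sum_i \#(iw', n)$, and among the patterns $iw'$ the one with $i = \mathtt 0$ is $w$; rearranging and using that $a_{\{w'\}}$ and the $a_{\{iw'\}}$ for $i \neq \mathtt 0$ are controlled by shorter kernels shows $a_{\{w\}}$ has the claimed period. The symmetric-difference structure then yields that $a/b$, being a product of such $k^{\ell-1}$-periodic factors, is $k^{\ell-1}$-periodic.

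Finally, for uniqueness: suppose $b_1, b_2$ are both \sinv{} \pc{} sequences of length $\leq \ell$ with $a/b_1$ and $a/b_2$ both $k^{\ell-1}$-periodic. Then $b_1/b_2 = (a/b_2)/(a/b_1)$ is $k^{\ell-1}$-periodic, and by Lemma \ref{lem:PC-is-closed} it is a \pc{} sequence; moreover it is \sinv{} since $b_1, b_2$ are. So it suffices to show that the only \sinv{} \pc{} sequence that is also $k^{\ell-1}$-periodic is the constant sequence $+1$. Write $b_1/b_2 = a_D$ with $D$ admissible and, by Lemma \ref{lem:PC-crit-invariant}, with no leading and no trailing zeros. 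If $D \neq \emptyset$, pick the shortest $v \in D$; then as in the uniqueness argument of Lemma \ref{lem:PC-representation}, looking at $n$ with $(n)_k = v$ versus $n' $ with $(n')_k = \mathtt 0 v$ (or more simply, evaluating $a_D$ at $[\mathtt 1 v]_k$ type arguments) contradicts $k^{\ell-1}$-periodicity — concretely, $v$ has length $\le \ell$ so $v$ is a pattern whose occurrence can be toggled by changing digits far above position $\ell - 1$, violating periodicity, unless $a_D \equiv 1$. I expect the main obstacle to be this last step — pinning down precisely why a nonconstant \sinv{} \pc{} sequence cannot be $k^{\ell-1}$-periodic — and making the "leading/trailing zero $\Rightarrow$ short period" claim fully rigorous; both are elementary combinatorics on words via \eqref{eq:161:1} and the additivity $\#(v, k^\alpha m + n) = \#(v, k^\alpha m) + \#(v,n)$, but require care about edge cases ($\ell = 1$, the empty prefix $u$, patterns that are simultaneously prefixes and suffixes).
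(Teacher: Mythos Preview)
Your existence argument has a genuine gap at two linked points. First, the map $\phi\colon v=u\mathtt 0^j \mapsto \mathtt 0^j u$ is \emph{not} injective on $\Sigma_k^\ell$: for instance with $k=2$, $\ell=2$ one has $\phi(\mathtt{10})=\phi(\mathtt{01})=\mathtt{01}$. So if $C=\{\mathtt{10},\mathtt{01}\}$ then your $C'$ is the singleton $\{\mathtt{01}\}$, and $C\oplus C'=\{\mathtt{10}\}$. Second, and more seriously, your central claim that a single length-$\ell$ pattern beginning or ending in $\mathtt 0$ yields a $k^{\ell-1}$-periodic sequence is false: in the same example $a_{\{\mathtt{10}\}}(0)=+1$ while $a_{\{\mathtt{10}\}}(2)=-1$, so $a_{\{\mathtt{10}\}}$ is not $2$-periodic. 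Thus for $C=\{\mathtt{10},\mathtt{01}\}$ your construction produces $b=a_{\{\mathtt{01}\}}$ with $a/b=a_{\{\mathtt{10}\}}$ not periodic, whereas the correct decomposition is $b\equiv +1$ (one checks directly that $a_{\{\mathtt{10},\mathtt{01}\}}(n)=(-1)^{n\bmod 2}$).

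The idea you are missing is that it is not individual patterns with a leading or trailing zero that give periodic pieces, but rather the specific combinations $D(v):=\{v\}\cup\{vi:i\in\Sigma_k\}$. For these one has $\sum_i \#(vi,n)-\#(v,n)\in\{0,\pm 1\}$ depending only on whether $v$ is a suffix of $(n)_k$, so $a_{D(v)}$ is $k^{|v|}$-periodic. The paper then starts from the no-leading-zeros representation and, whenever some $v\mathtt 0$ remains in the current set, XORs with $D(v)$; this strictly decreases the total length of patterns ending in $\mathtt 0$, terminates at a set $B$ with no trailing zeros, and $a/a_B$ is a product of $k^{\ell-1}$-periodic factors. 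Your uniqueness sketch is essentially fine, though it becomes a one-liner once you observe that a \sinv{} $k^{\ell-1}$-periodic sequence $c$ satisfies $c(n)=c(k^{\ell-1}n)=c(0)=+1$ for all $n$.
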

\begin{proof}
	By Lemma \ref{lem:PC-representation}, we may assume that $a = a_A$ for a set $A \subset \Sigma_k^*$ without leading zeros. Reasoning along similar lines as in the proof of Lemma \ref{lem:PC-representation}, we note that for any word $v$, we have
\[
	\sum_{i=0}^{k-1}\#(vi,n) - \#(v,n) = 
	\begin{cases}
	1 &\text{ if } v \text{ is a suffix of } n,\\
	0 &\text{ otherwise.}
	\end{cases} 
\]
In particular, letting $D(v) := \set{vi}{i \in \Sigma_k}\cup \{v\}$, we see that the sequence $a_{D(v)}$ is $k^{\abs{v}}$-periodic. We construct a sequence of sets $A := A_0,A_1,\dots, A_t = :B$, where $A_{j+1} = A_j \oplus D(v)$ if $A_j$ contains the word $v\mathtt 0$ for some $v \in \Sigma_k^*$ and $t$ is the first index such that no word in $A_t$ ends with $\mathtt{0}$. This construction is guaranteed to terminate because each step decreases the total length of words in $A_j$ that end with $\mathtt{0}$. Letting $b :=a_B$ we observe that $a/b$ is the product of $k^{\ell-1}$-periodic sequences and hence $k^{\ell-1}$-periodic.
\end{proof}

\section{Correlation coefficients}\label{sec:Correlation}

In this section we study correlation coefficients of $k$-automatic sequences and show that they are $k$-regular (Corollary \ref{cor:gamma-is-reg}). This allows us to reduce the task of verifying if a given $k$-automatic sequence is noncorrelated to checking if a given $k$-regular sequence is identically zero on $\NN$, which can be accomplished with the help of Lemma \ref{lem:zero-criterion}.

{\subsection{Definitions}\color{final}
For two sequences $a,b \colon \NN_0 \to \CC$, we define the correlation coefficients:
\begin{equation}
\label{eq:def-of-gamma}
	\gamma_{a,b}(m) := \lim_{N \to \infty} \frac{1}{N} \sum_{n=0}^{N-1} a(n) \bar b(n+m),
\end{equation}
if the limit exists (otherwise, $\gamma_{a,b}(m)$ is considered undefined). We are often interested in the case where $a = b$, when we write $\gamma_a$ in place of $\gamma_{a,a}$.
Unfortunately, the limit defining $\gamma_{a,b}(m)$ is not guaranteed to converge even if $a$ and $b$ are automatic. This motivates us to consider the logarithmic correlation coefficients, defined by
\begin{equation}
\label{eq:def-of-gammalog}
	\gammalog_{a,b}(m) := \lim_{N \to \infty} \frac{1}{\log N} \sum_{n=0}^{N-1} \frac{1}{n+1} a(n) \bar b(n+m),
\end{equation}

If $a$ and $b$ are automatic and $m \in \NN_0$, then the sequence $n \mapsto a(n)\bar b(n+m)$ is also automatic. Since Theorem \ref{thm:log-mean-exists} guarantees existence of logarithmic means of automatic sequences, we have the following fact.
\begin{corollary}
	Let $a,b \colon \NN_0 \to \CC$ be $k$-automatic sequences. Then the coefficients $\gammalog_{a,b}(m)$ are well-defined for all $m \in \NN_0$. Moreover, if the coefficient $\gamma_{a,b}(m)$ is well-defined for some $m \in \NN_0$ then $\gamma_{a,b}(m) = \gammalog_{a,b}(m)$.
\end{corollary}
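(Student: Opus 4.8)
The plan is to reduce both assertions to facts about the single sequence $c_m \colon \NN_0 \to \CC$ defined by $c_m(n) := a(n)\bar b(n+m)$, and to quote the results already assembled above. First I would observe that $c_m$ is $k$-automatic: the sequence $n \mapsto b(n+m)$ is $k$-automatic because the class of $k$-automatic sequences is closed under $n \mapsto b(An+B)$ (here $A = 1$, $B = m$); taking complex conjugates preserves $k$-automaticity; and the pointwise product of two $k$-automatic sequences is $k$-automatic. Hence $c_m$ is $k$-automatic for every fixed $m \in \NN_0$.

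For the first claim, note that by the definition \eqref{eq:def-of-gammalog} we have $\gammalog_{a,b}(m) = \Mlog{c_m}$ whenever the latter exists. Since $c_m$ is $k$-automatic, Theorem \ref{thm:log-mean-exists} guarantees that $\Mlog{c_m}$ exists, so $\gammalog_{a,b}(m)$ is well-defined for all $m \in \NN_0$.

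For the second claim, suppose $\gamma_{a,b}(m)$ is well-defined for some $m$; by \eqref{eq:def-of-gamma} this means precisely that $\M{c_m}$ exists and equals $\gamma_{a,b}(m)$. The sequence $c_m$ is bounded (any $k$-automatic sequence takes only finitely many values), so the standard fact that for a bounded sequence the existence of $\M{\cdot}$ forces $\Mlog{\cdot}$ to exist with $\Mlog{\cdot} = \M{\cdot}$ (quoted above from \cite[Prop.{} 8.4.4 (a)]{AlloucheShallit-book}) yields $\gamma_{a,b}(m) = \M{c_m} = \Mlog{c_m} = \gammalog_{a,b}(m)$.

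There is no real obstacle here; the statement is a direct corollary, and the only point deserving a word of care is the bookkeeping that $n \mapsto a(n)\bar b(n+m)$ indeed falls under the listed closure properties of $k$-automatic sequences, after which Theorem \ref{thm:log-mean-exists} and the elementary comparison of $\M{\cdot}$ with $\Mlog{\cdot}$ do all the work.
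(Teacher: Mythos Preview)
Your proposal is correct and follows exactly the route the paper takes: the paper justifies this corollary by observing (in the sentence just before it) that $n \mapsto a(n)\bar b(n+m)$ is $k$-automatic, then invokes Theorem~\ref{thm:log-mean-exists} for existence of the logarithmic mean and the standard fact $\M{c} \Rightarrow \Mlog{c} = \M{c}$ for bounded sequences for the equality. Your write-up simply makes these steps explicit.
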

}

\subsection{Recurrence}
{\color{final}
Our next goal is to obtain a recursive description of the correlation coefficients discussed above. Recall that for a $k$-automatic sequence $a$, the kernel $\cN_k(a)$ is finite and closed under the operators $\Lambda_i$ defined in \eqref{eq:def-of-Lambda} for all $i \in \Sigma_k$.
}

\begin{lemma}\label{lem:gammalog-recurrence}\color{final}
	Let $\cN$ be a finite set of sequences $\NN_0 \to \CC$, closed under the operators $\Lambda_i$ for all $i \in \Sigma_k$. Then for all $a,b \in \cN$ it holds that
	\begin{equation}\label{eq:791:1}
		\gammalog_{a,b}(m) = \frac{1}{k} \sum_{i=0}^{k-1} \gammalog_{a_i',b_i'}(m_i'), 
	\end{equation}
	where $a_i',b_i'$ and $m_i'$ are given by
	\begin{equation}\label{eq:791:2}
		a_i' = \Lambda_{i} a,\quad b_i' = \Lambda_{ m+i \bmod{k} } b, \quad m_i' = \floor{\frac{m+i}{k}}.
	\end{equation}
\end{lemma}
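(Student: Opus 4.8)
The plan is to establish \eqref{eq:791:1} by splitting the logarithmic average defining $\gammalog_{a,b}(m)$ according to the residue class of $n$ modulo $k$. Write $n = km' + i$ with $i \in \Sigma_k$; then $a(n) = \Lambda_i a(m') = a_i'(m')$, and we need to understand $\bar b(n+m)$. Since $n + m = km' + (i+m)$, and $i + m = k\floor{(m+i)/k} + ((m+i) \bmod k)$, we get $n + m = k(m' + \floor{(m+i)/k}) + ((m+i)\bmod k)$, so $\bar b(n+m) = \overline{\Lambda_{(m+i)\bmod k} b}(m' + \floor{(m+i)/k}) = \bar b_i'(m' + m_i')$ with $b_i'$ and $m_i'$ as in \eqref{eq:791:2}. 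Thus for each fixed $i$, the subsum of $a(n)\bar b(n+m)$ over $n \equiv i \pmod k$ is exactly a sum of $a_i'(m')\bar b_i'(m' + m_i')$ over a range of $m'$.

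Next I would carry out the bookkeeping on the logarithmic weights. We have
\begin{equation*}
	\frac{1}{\log N} \sum_{n=0}^{N-1} \frac{a(n)\bar b(n+m)}{n+1} = \frac{1}{\log N} \sum_{i=0}^{k-1} \sum_{\substack{0 \leq m' \\ km'+i < N}} \frac{a_i'(m')\bar b_i'(m'+m_i')}{km'+i+1}.
\end{equation*}
The key estimate is that $\frac{1}{km'+i+1} = \frac{1}{k(m'+1)} + O(1/m'^2)$, so replacing the denominator by $k(m'+1)$ changes each inner sum by a bounded amount, which is negligible after dividing by $\log N$. Also $\log N = \log(N/k) + O(1)$, and the condition $km'+i < N$ is equivalent to $m' < (N-i)/k$, which differs from $m' < N/k$ by at most one term (again negligible). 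Hence the inner sum over $i$ tends to $\frac{1}{k} \cdot \gammalog_{a_i', \bar b_i'(\cdot + m_i')}$, but since $\bar b_i'$ is itself an element of a finite $\Lambda$-closed family (note $\cN$ is closed under all $\Lambda_j$, and one checks $\bar b_i'(\cdot + m_i')$ has a logarithmic correlation with $a_i'$ equal to $\gammalog_{a_i',b_i'}(m_i')$ by a further reduction if $m_i' > 0$), we need to make sure the relevant limits exist.

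Existence of all the logarithmic limits in play is what makes the identity meaningful; I would invoke Theorem \ref{thm:log-mean-exists} applied to the automatic (indeed, in a finite $\Lambda$-closed orbit) sequences $n \mapsto a_i'(n)\bar b_i'(n+m_i')$, so every $\gammalog_{a_i',b_i'}(m_i')$ is well-defined, and then the manipulations above are valid manipulations of convergent quantities. I expect the main obstacle to be purely a matter of care rather than depth: handling the shift by $m_i'$ cleanly (one must argue $\gammalog$ of $a_i'(n)\bar b_i'(n + m_i')$ equals $\gammalog_{a_i',b_i'}(m_i')$, which is just the definition once one observes the family $\cN$ is closed under $\Lambda_i$ so all these sequences lie in a single finite set and hence have logarithmic means), and tracking the error terms from the weight $1/(n+1)$ versus $1/(k(m'+1))$ and from the truncation of the summation range, showing each is $o(\log N)$. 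None of these steps is hard, but writing them so that the reader is convinced the rearrangement of a conditionally-defined average is legitimate is the part requiring attention; using the logarithmic (rather than Cesàro) average is exactly what rescues convergence here.
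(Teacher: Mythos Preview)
Your proposal is correct and follows essentially the same route as the paper's proof: split into residue classes modulo $k$, identify $a_i',b_i',m_i'$, replace $1/(km'+i+1)$ by $1/(k(m'+1))$ at the cost of an $O(1)$ error in the sum, use $\log N = \log(N/k)+O(1)$, and pass to the limit. The only cosmetic differences are that the paper first rescales so that all sequences in $\cN$ are $1$-bounded (making the error constants uniform), and that your discussion of $\gammalog_{a_i',\bar b_i'(\cdot+m_i')}$ is unnecessarily roundabout---by definition $\gammalog_{a_i',b_i'}(m_i')$ is already the logarithmic mean of $n\mapsto a_i'(n)\bar b_i'(n+m_i')$, so no ``further reduction'' is needed.
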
\color{final}
\begin{proof}  Rescaling if necessary, we assume that all sequences in $\cN$ are $1$-bounded (that is, $\abs{a(n)} \leq 1$ for all $a \in \cN$ and $n \in \NN_0$).
	For each $N > 0$, splitting $[N]$ into residue classes modulo $k$ we obtain
	\begin{align*}
		\sum_{n=0}^{N-1} \frac{a(n) \bar b(n+m) }{n+1} 
		&= \sum_{i=0}^{k-1} \sum_{n=0}^{\floor{N/k}-1} \frac{a(kn+i)\bar b(kn+i+m)}{kn+i+1}  + O\bra{\frac{1}{N}}.
		\\&=  \frac{1}{k} \sum_{i=0}^{k-1} \sum_{n=0}^{\floor{N/k}-1} \frac{a_i'(n) \bar b_i'(n+m_i')}{n+1} + O\bra{1},
	\end{align*}
	where $a_i',b_i'$ and $m_i'$ are given by \eqref{eq:791:2}, and we use the estimate ${1}/\bra{kn+i+1} = {1}/{k(n+1)} + O(1/(n+1)^2)$ together with the fact that $\sum_{n=0}^\infty 1/(n+1)^2$ is summable. Dividing by $\log N$ and recalling that $1/\log(N/k) = 1/\log N + O(1/\log^2 N)$ we obtain
	\begin{align*}
		\frac{1}{\log N} \sum_{n=0}^{N-1} \frac{a(n) \bar b(n+m) }{n+1} 
		&=  \frac{1}{k} \sum_{i=0}^{k-1} \frac{1}{\log\floor{N/k}} \sum_{n=0}^{\floor{N/k}-1} \frac{a_i'(n) \bar b_i'(n+m_i')}{n+1} + O\bra{\frac{1}{\log N}}.
	\end{align*}
	Letting $N \to \infty$, we obtain \eqref{eq:791:1}.
\end{proof}

\color{new}
{\color{final}
While the coefficients $\gammalog_{a,b}(m)$ are better-behaved in general, our original motivation concerns the coefficients $\gamma_{a,b}(m)$ (where additionally $a = b$). Fortunately, existence of the latter is easy to ensure under mild additional assumptions.
}

\begin{lemma}\label{lem:gamma-exists}\color{final}
	Let $\cN$ be a finite set of sequences $\NN_0 \to \CC$, closed under the operators $\Lambda_i$ for all $i \in \Sigma_k$. Suppose that $\gamma_{a,b}(0)$ exists for each $a,b \in \cN$. Then also $\gamma_{a,b}(m)$ exists for all $a,b \in \cN$ and $m \in \NN_0$ and, using the notation from \eqref{eq:791:2}, satisfy
	\begin{equation}\label{eq:791:1-b}
		\gamma_{a,b}(m) = \frac{1}{k} \sum_{i=0}^{k-1} \gamma_{a_i',b_i'}(m_i'), 
	\end{equation}	
\end{lemma}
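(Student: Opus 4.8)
The plan is to re-run the residue-class decomposition from the proof of Lemma~\ref{lem:gammalog-recurrence}, but with the uniform weights $1/N$ in place of the logarithmic weights $1/(n+1)$. The point is that under uniform weights the decomposition becomes \emph{exact} --- no logarithmic-weight error term survives --- so the partial sums of $a(n)\bar b(n+m)$ over an initial segment split cleanly into partial sums of the $a_i'(n)\bar b_i'(n+m_i')$ over shorter segments. Existence of the limits then falls out of an induction on $m$, with $m=1$ being the one case that needs an extra idea.

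First I would fix notation. Rescaling, I may assume every sequence in $\cN$ is $1$-bounded. Since $\cN_k(a)\subset\cN$ for $a\in\cN$, each such $a$ is $k$-automatic, so by Theorem~\ref{thm:log-mean-exists} the logarithmic correlations $\gammalog_{a,b}(m)$ all exist, are $1$-bounded, and by Lemma~\ref{lem:gammalog-recurrence} satisfy the exact recurrence $\gammalog_{a,b}(m)=\frac1k\sum_{i=0}^{k-1}\gammalog_{a_i',b_i'}(m_i')$ with $a_i',b_i',m_i'$ as in \eqref{eq:791:2}. For a pair $p=(a,b)$ with $a,b\in\cN$, write $S_p(N,m)=\frac1N\sum_{n=0}^{N-1}a(n)\bar b(n+m)$, $M_{i,N}=\#\set{n\in\NN_0}{kn+i<N}$ and $p_i'=(a_i',b_i')$; note $\sum_{i=0}^{k-1}M_{i,N}=N$, $\abs{M_{i,N}/N-1/k}\leq 1/N$, and $M_{i,N}\to\infty$. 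Partitioning $\set{n}{0\leq n<N}$ by $n\bmod k$ and using $a(kn'+i)=\Lambda_i a(n')$ together with $kn'+i+m=k(n'+m_i')+((m+i)\bmod k)$ for the $b$-factor, I obtain the exact identity
\[
  S_p(N,m)=\sum_{i=0}^{k-1}\frac{M_{i,N}}{N}\,S_{p_i'}(M_{i,N},m_i').
\]
Setting $E_p(N,m):=S_p(N,m)-\gammalog_p(m)$ (which is $2$-bounded), subtracting the $\gammalog$-recurrence and pushing the discrepancies $\abs{M_{i,N}/N-1/k}\leq 1/N$ into an error term gives
\[
  E_p(N,m)=\sum_{i=0}^{k-1}\frac{M_{i,N}}{N}\,E_{p_i'}(M_{i,N},m_i')+O(1/N),
\]
a weighted average with nonnegative weights summing to $1$ (and error uniform in $p,m$).

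I would then prove $E_p(N,m)\to0$ as $N\to\infty$ for every pair $p$, by induction on $m$. The case $m=0$ is the hypothesis. For $m\geq2$ one has $km-(m+k-1)=(k-1)(m-1)>0$, hence $m_i'=\floor{(m+i)/k}<m$ for all $i$; then $E_{p_i'}(M_{i,N},m_i')\to0$ by the inductive hypothesis (using $M_{i,N}\to\infty$), so $E_p(N,m)\to0$ since the weights sum to $1$ and the $O(1/N)$ term vanishes. The main obstacle is $m=1$, where $m_i'=0$ for $i<k-1$ but $m_{k-1}'=1$, so the recurrence feeds back into level $m=1$ and a plain induction does not close. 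Here I would use a contraction argument: the terms with $i<k-1$ tend to $0$ by the $m=0$ case, so $E_p(N,1)=\frac{M_{k-1,N}}{N}E_{p_{k-1}'}(M_{k-1,N},1)+o(1)$ uniformly in $p$, with $M_{k-1,N}/N\leq 1/k+O(1/N)$. Putting $\beta:=\limsup_{N\to\infty}\max_p\abs{E_p(N,1)}$ --- finite, indeed $\leq 2$, as there are finitely many pairs and each $E_p$ is $2$-bounded --- and taking $\limsup$ in this identity (using $\limsup_N\max_p=\max_p\limsup_N$ for a finite index set, and $M_{k-1,N}\to\infty$) yields $\beta\leq\beta/k$, hence $\beta=0$ because $k\geq2$. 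This finishes the induction.

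Finally, $E_p(N,m)\to0$ for all $p$ and $m$ says precisely that $\gamma_{a,b}(m)$ exists and equals $\gammalog_{a,b}(m)$. The recurrence \eqref{eq:791:1-b} then follows by letting $N\to\infty$ in the exact identity for $S_p(N,m)$ (using $M_{i,N}/N\to 1/k$ and $S_{p_i'}(M_{i,N},m_i')\to\gamma_{p_i'}(m_i')$), or equivalently straight from Lemma~\ref{lem:gammalog-recurrence} now that $\gamma=\gammalog$ throughout. I expect the only real friction to be the $m=1$ diagonal term; everything else is the bookkeeping of the residue decomposition.
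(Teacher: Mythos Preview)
Your argument is correct. The residue decomposition is indeed exact for the Ces\`aro averages, your induction on $m$ is sound, and the $m=1$ step closes cleanly: with finitely many pairs $p$, the quantity $\beta=\limsup_N\max_p|E_p(N,1)|$ is finite, the $i<k-1$ terms go to zero uniformly in $p$, and the remaining diagonal term contracts by a factor $1/k$ along the subsequence $M_{k-1,N}\to\infty$, forcing $\beta=0$. (The parenthetical ``$\limsup_N\max_p=\max_p\limsup_N$'' is not quite what you use; all you need is $\limsup_N\max_q|E_q(M_{k-1,N},1)|\le\limsup_M\max_q|E_q(M,1)|$, which is just the subsequence inequality.)

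The route, however, is genuinely different from the paper's. The paper does not invoke $\gammalog$ at all in this proof: for $m=1$ it iterates the Ces\`aro recurrence $t$ times to write $\gamma_{a,b}(1;x)$ as a convex combination of terms $\gamma_{a',b'}(0;x/k^t)$ plus a remainder of size $O(1/k^t)+O(k^t/x)$, then lets $x\to\infty$ to obtain a sequence of approximants $\gamma_{a,b}^{(t)}(1)$ that is manifestly Cauchy, and identifies the limit. Your approach instead uses Theorem~\ref{thm:log-mean-exists} and Lemma~\ref{lem:gammalog-recurrence} to know in advance what the limit should be, reducing the task to showing the error $E_p=S_p-\gammalog_p$ dies; the $m=1$ contraction then replaces the explicit iteration. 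Your version is shorter and immediately gives $\gamma=\gammalog$ for free; the paper's version is more self-contained, since it establishes existence of $\gamma_{a,b}(m)$ without first appealing to the existence of logarithmic means of automatic sequences.
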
\color{final}
\begin{proof} Rescaling if necessary, we may assume that all sequences in $\cN$ are $1$-bounded.
	Generalizing the definition of $\gamma_{a,b}(m)$ slightly, for $x \geq 1$ let us put
	\begin{equation}\label{eq:427:01}
		\gamma_{a,b}(m;x) = \frac{1}{\floor{x}} \sum_{n=0}^{\floor{x}-1} a(n) \bar b(n+m).	
	\end{equation}
	Then, following the same reasoning as in Lemma \ref{lem:gammalog-recurrence}, we find the recursive relation
	\begin{equation}\label{eq:427:02}
		\gamma_{a,b}(m;x) = \frac{1}{k} \sum_{i=0}^{k-1} \gamma_{a_i',b_i'}(m_i';x/k) + O(1/x), 
	\end{equation}
	where $a_i',b_i'$ and $m_i'$ are given by \eqref{eq:791:2}. 
	
	In particular, for $m = 1$ we obtain
	\begin{equation}\label{eq:427:03}
		\gamma_{a,b}(1;x) = \frac{1}{k} \sum_{i=0}^{k-2} \gamma_{a_i',b_i'}(0;x/k) + \frac{1}{k} \gamma_{a_{k-1}',b_{k-1}'}(1;x/k) + O(1/x). 
	\end{equation}
	Iterating \eqref{eq:427:03} $t$ times, we conclude that there exist weights $w_{a',b'}^{(t)} \geq 0$ ($a',b' \in \cN$) with $\sum_{a',b' \in \cN} w_{a',b'}^{(t)} = 1-1/k^t$ and sequences $a^{(t)},b^{(t)} \in \cN$ such that
	\begin{equation}\label{eq:427:04}
		\gamma_{a,b}(1;x) = \sum_{a',b' \in \cN} w_{a',b'}^{(t)} \gamma_{a',b'}(0;x/k^t) + \frac{1}{k^t} \gamma_{a^{(t)},b^{(t)}}(1;x/k^t) + O(k^t/x). 
	\end{equation}
	Since $\gamma_{a',b'}(0;y) \to \gamma_{a',b'}(0)$ as $y \to \infty$ for each $a',b' \in \cN$, letting $x \to \infty$ in \eqref{eq:427:04} we conclude that there exists a number $\gamma_{a,b}^{(t)}(1) = \sum_{a',b' \in \cN} w_{a',b'}^{(t)} \gamma_{a',b'}(0)$ such that
	\begin{equation}\label{eq:427:05}
		\limsup_{x \to \infty} \abs{ \gamma_{a,b}(1;x) -\gamma_{a,b}^{(t)}(1)} = O(1/k^t). 
	\end{equation}	 
	It follows that the sequence $\gamma_{a,b}^{(t)}(1)$ ($t \in \NN$) is Cauchy, and $\gamma_{a,b}(1)$ is well-defined:
	\begin{equation}\label{eq:427:06}
	\gamma_{a,b}(1) = \lim_{x \to \infty} \gamma_{a,b}(1;x)= \lim_{t \to \infty} \gamma_{a,b}^{(t)}(1).
	\end{equation}	 

	We are now ready to prove by induction on $m$ that the coefficients $\gamma_{a,b}(m)$ are well-defined for all $m \in \NN_0$ and $a,b \in \cN$. The case $m = 0$ is included in the assumptions, and we have dealt with $m = 1$ above. Suppose now that $m \geq 2$. For each $i \in \Sigma_k$, since $\floor{x/k} \leq x/k < x$ for all $x > 0$, we have
	\begin{equation}\label{eq:427:07}
		m_i' = \floor{ \frac{m+i}{k}} \leq \floor{ \frac{m+k-1}{k}} = \floor{ \frac{m-1}{k}} + 1 < (m-1) + 1 = m.
	\end{equation}	
Hence, existence of $\gamma_{a,b}(m)$ follows from \eqref{eq:427:03} and the inductive assumption. Finally, to obtain \eqref{eq:791:1-b} it remains to pass to the limit $x \to \infty$ in \eqref{eq:427:02} (or use Lemma \ref{lem:gammalog-recurrence} combined with the remark after Theorem \ref{thm:log-mean-exists}).
\end{proof}

{\subsection{Regularity}\color{final}
  We are now ready to show that the logarithmic correlation sequences coming from $k$-automatic sequences are $k$-regular. In fact, bearing in mind applications in Section \ref{sec:Implementation} we record a slightly more precise statement. Recall that for a sequence $a$, the sequence $Sa$ is given by $Sa(n) = a(n+1)$. Similar ideas can be seen in \cite[Thm.{} 6]{AlloucheShallit-2003}.

\begin{proposition}\label{prop:gamma-is-reg}\color{final}
	Let $\cN$ be a finite set of sequences $\NN_0 \to \CC$, closed under the operators $\Lambda_i$ for all $i \in \Sigma_k$. Let $\cM = \set{ S^{e} \gammalog_{a,b} }{a,b \in \cN, e \in \{0,1\} } $. Then $\linspan_{\QQ} \cM$ is closed under the operators $\Lambda_i$ for all $i \in \Sigma_k$. 
\end{proposition}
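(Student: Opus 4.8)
The plan is to compute, for each digit $j \in \Sigma_k$, the effect of $\Lambda_j$ on the two kinds of generators of $\cM$ --- the sequences $\gammalog_{a,b}$ and $S\gammalog_{a,b}$ with $a,b \in \cN$ --- and to check in each case that the output is a $\QQ$-linear combination of members of $\cM$; closure of $\linspan_{\QQ}\cM$ then follows from linearity of $\Lambda_j$. Observe first that, since $\cN$ is finite and $\Lambda_i$-closed, the $k$-kernel of every $a \in \cN$ is contained in $\cN$, so each $a \in \cN$ is $k$-automatic; consequently all the coefficients $\gammalog_{a,b}(m)$ ($a,b \in \cN$, $m \in \NN_0$) are well-defined and Lemma \ref{lem:gammalog-recurrence} applies to every pair from $\cN$.

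The heart of the argument is the case $e = 0$. Fix $a,b \in \cN$ and $j \in \Sigma_k$. By \eqref{eq:def-of-Lambda} we have $\Lambda_j \gammalog_{a,b}(m) = \gammalog_{a,b}(km+j)$, so applying Lemma \ref{lem:gammalog-recurrence} with $m$ replaced by $km+j$, and using that $(km+j+i)\bmod k = (j+i)\bmod k$ and $\floor{(km+j+i)/k} = m + \floor{(j+i)/k}$, we obtain
\begin{equation*}
	\Lambda_j \gammalog_{a,b}(m) \;=\; \frac{1}{k} \sum_{i=0}^{k-1} \gammalog_{\Lambda_i a,\ \Lambda_{(j+i)\bmod k}\, b}\bra{ m + \floor{\tfrac{j+i}{k}} }.
\end{equation*}
For $0 \le i \le k-1-j$ we have $0 \le j+i \le k-1$, so the $i$-th term is $\gammalog_{\Lambda_i a,\, \Lambda_{j+i}b}(m)$; for $k-j \le i \le k-1$ we have $k \le j+i \le 2k-2$, whence $\floor{(j+i)/k}=1$, $(j+i)\bmod k = j+i-k \in \{0,\dots,j-1\}$, and the $i$-th term is $S\gammalog_{\Lambda_i a,\, \Lambda_{j+i-k}b}(m)$. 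Therefore
\begin{equation*}
	\Lambda_j \gammalog_{a,b} \;=\; \frac{1}{k}\sum_{i=0}^{k-1-j} \gammalog_{\Lambda_i a,\ \Lambda_{j+i} b}\; +\; \frac{1}{k}\sum_{i=k-j}^{k-1} S\,\gammalog_{\Lambda_i a,\ \Lambda_{j+i-k} b},
\end{equation*}
and since $\cN$ is $\Lambda_i$-closed every index appearing is a member of $\cN$, so each summand lies in $\cM$ (with $e=0$ in the first sum, $e=1$ in the second). In particular, for $j = 0$ the second sum is empty and $\Lambda_0 \gammalog_{a,b} = \frac{1}{k} \sum_{i=0}^{k-1} \gammalog_{\Lambda_i a,\, \Lambda_i b}$ carries no shift.

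For $e = 1$ we use how $\Lambda_j$ and $S$ compose. If $0 \le j \le k-2$, then by Lemma \ref{lem:ops-compose} $\Lambda_j S = \Lambda_{j+1}$, so $\Lambda_j(S\gammalog_{a,b}) = \Lambda_{j+1}\gammalog_{a,b} \in \linspan_{\QQ} \cM$ by the previous paragraph. If $j = k-1$, then a direct computation from the definitions gives $\Lambda_{k-1}S\, g(m) = g(km+k) = (\Lambda_0 g)(m+1) = S\Lambda_0 g(m)$, hence
\begin{equation*}
	\Lambda_{k-1}(S\gammalog_{a,b}) \;=\; S\bra{\Lambda_0 \gammalog_{a,b}} \;=\; \frac{1}{k}\sum_{i=0}^{k-1} S\,\gammalog_{\Lambda_i a,\ \Lambda_i b} \;\in\; \linspan_{\QQ} \cM.
\end{equation*}
Combining the cases $e=0$ and $e=1$ and invoking linearity of $\Lambda_j$, we obtain $\Lambda_j(\linspan_{\QQ} \cM) \subset \linspan_{\QQ} \cM$ for all $j \in \Sigma_k$, which is the assertion.

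The only delicate point --- and the reason the truncated index set $0 \le e \le 1$ suffices --- is the case $j = k-1$, $e=1$: composing the shift $S$ with $\Lambda_{k-1}$ threatens to produce a double shift $S^2 \gammalog_{a',b'} \notin \cM$. This does not happen precisely because $\Lambda_{k-1}S = S\Lambda_0$ (rather than $S\Lambda_j$ with $j \ge 1$), and $\Lambda_0$ --- unlike $\Lambda_1,\dots,\Lambda_{k-1}$ --- produces no ``carry'', so $\Lambda_0\gammalog_{a,b}$ is an unshifted combination of correlation sequences and a single application of $S$ keeps the result inside $\cM$. This is the step to handle with care; everything else is bookkeeping with the recurrence of Lemma \ref{lem:gammalog-recurrence}.
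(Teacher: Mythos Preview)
Your proof is correct and follows essentially the same route as the paper: apply the recurrence of Lemma~\ref{lem:gammalog-recurrence} to $\gammalog_{a,b}(kn+j+e)$ and observe that the resulting shifts $\floor{(j+i+e)/k}$ never exceed $1$. The paper treats $e\in\{0,1\}$ uniformly in a single line, whereas you split off $e=1$ and reduce it to the $e=0$ case via the operator identities $\Lambda_j S=\Lambda_{j+1}$ and $\Lambda_{k-1}S=S\Lambda_0$; your direct verification of the latter is prudent, since the statement of Lemma~\ref{lem:ops-compose} in the paper contains a typo ($S\Lambda_1$ should read $S\Lambda_0$).
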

\begin{proof}\color{final}
	Pick any $g = S^{e} \gammalog_{a,b} \in \cM$ ($a,b \in \cN$, $e \in \{0,1\}$) and $j \in \Sigma_k$. It follows from Lemma \ref{lem:gammalog-recurrence} that
\begin{equation}\label{eq:202:1}
		\Lambda_j g(n) = \gammalog_{a,b}(kn+j+e) = \frac{1}{k} \sum_{i=0}^{k-1} \gamma_{a_i',b_i'}^{\log}(n + e_i')
\end{equation}
where for each $i \in \Sigma_k$, $a_i',b_i' \in \cN$ and 
\[
e_i' = \floor{ \frac{j+i+e}{k} } \leq \floor{ \frac{(k-1)+(k-1)+1}{k} } = 1.
\]
It remains to note that each of the functions of $n$ appearing under the sum on the right hand side of \eqref{eq:202:1} belongs to $\cM$.
\end{proof}

\begin{theorem}\label{cor:gamma-is-reg}	\color{final}
If $a \colon \NN_0 \to \CC$ is $k$-automatic then the sequence $\gamma_{a}^{\log}$ is $k$-regular and $\dim \linspan_{\QQ} \cN_k(\gamma_{a,a}^{\log}) \leq 2 \abs{\cN_k(a)}^2$.
\end{theorem}
} 
\newcommand{\gammaR}[1]{\gamma^{(#1)}}

\section{Verifying noncorrelation}\label{sec:Implementation}
\JK{change title}
{\color{good} 
 We now discuss the practical details of how one can check if a given \pc{} sequence is noncorrelated. 
We begin by setting up the notation and adapting the general results from previous sections to the situation at hand; this is done in subsections \ref{ssec:Imp-Setup} and \ref{ssec:Imp-Recursive}. Then, in subsections \ref{ssec:Imp-Small} and \ref{ssec:Imp-Basis} we discuss how the relevant computations can be performed. Finally, in subsection \ref{ssec:Imp-Complex} we discuss the complexity of the resulting algorithm, which finishes the proof of Theorem \ref{thm:B}. Implementation of this algorithm allows us to verify Theorem \ref{thm:A} by direct computation. 
}

\subsection{Setup}\label{ssec:Imp-Setup}{\color{final}
 Throughout this section, $A \subset \Sigma_k^{\ell}$ denotes an admissible set and $a \colon \NN_0 \to \{+1,-1\}$ denotes the corresponding \pc{} sequence:
\[ a(n) = a_A(n) = (-1)^{\#(A,n)} \qquad (n \in \NN_0).\]
We also introduce the sequence $f \colon \NN_0 \to \RR$ given by
\[
	f := \ind_{\NN} \cdot \gamma_{a}.
\]

Our task amounts to verifying that $f$ is well-defined (i.e., that the limits defining $\gamma_{a}(m)$ exist for all $m \in \NN$) and determining whether it is identically zero. The existence question is easily accounted for (cf.{} \cite[Section 3]{ZhengPengKamae-2018}).
}

\begin{lemma}\label{lem:gamma-exists-PC}\color{final}
	For each $b,c \in \cN_k(a)$ and $m \in \NN_0$, the coefficient $\gamma_{b,c}(m)$ exists.
\end{lemma}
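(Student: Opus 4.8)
The plan is to reduce the statement to an application of Lemma \ref{lem:gamma-exists}. Recall that $\cN_k(a)$ is a finite set of sequences $\NN_0 \to \CC$ which is closed under the operators $\Lambda_i$ for all $i \in \Sigma_k$ (since $a$ is $k$-automatic, being a \pc{} sequence). Thus, to invoke Lemma \ref{lem:gamma-exists} with $\cN = \cN_k(a)$, the only thing that remains to be checked is that the hypothesis of that lemma is satisfied, namely that $\gamma_{b,c}(0)$ exists for every $b,c \in \cN_k(a)$.

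First I would recall the structure of $\cN_k(a)$ provided by Lemma \ref{lem:char-of-PC}: since $a = a_A$ for a constant-length admissible set $A \subset \Sigma_k^{\ell}$, every element of $\cN_k(a)$ has the form $b = a \cdot h$ where $h$ is a $k^{\ell-1}$-periodic $\{+1,-1\}$-valued sequence (this uses the implication \eqref{it:631:A} $\Rightarrow$ \eqref{it:631:B} and the fact that each $\Lambda_i$ sends $k^{\ell-1}$-periodic sequences to $k^{\ell-2}$-periodic ones). Hence for $b = a\cdot h_1$ and $c = a \cdot h_2$ in $\cN_k(a)$ we have
\[
	\gamma_{b,c}(0;N) = \frac{1}{N}\sum_{n=0}^{N-1} a(n)\bar a(n) h_1(n) \bar h_2(n) = \frac{1}{N}\sum_{n=0}^{N-1} h_1(n)\bar h_2(n),
\]
using that $a$ is $\{+1,-1\}$-valued, so $a(n)\bar a(n) = 1$. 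The sequence $n \mapsto h_1(n)\bar h_2(n)$ is $k^{\ell-1}$-periodic, and averages of periodic sequences converge; explicitly, the limit equals $\frac{1}{k^{\ell-1}}\sum_{r=0}^{k^{\ell-1}-1} h_1(r)\bar h_2(r)$. Therefore $\gamma_{b,c}(0)$ exists for all $b,c \in \cN_k(a)$.

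With the hypothesis verified, Lemma \ref{lem:gamma-exists} applied to $\cN = \cN_k(a)$ immediately gives that $\gamma_{b,c}(m)$ exists for all $b,c \in \cN_k(a)$ and all $m \in \NN_0$, which is exactly the claim. I do not anticipate a serious obstacle here; the only mild subtlety is the appeal to Lemma \ref{lem:char-of-PC} to pin down the shape of the kernel, but one could alternatively observe more directly that the quotient $b/c$ of any two kernel elements is eventually periodic (indeed purely periodic, by the cited lemma), which suffices for the same computation. One should also note that the lemma as stated concerns $\cN_k(a)$ rather than an abstract $\cN$, so it is worth making explicit at the start that $\cN_k(a)$ is finite and $\Lambda_i$-closed so that Lemma \ref{lem:gamma-exists} genuinely applies.
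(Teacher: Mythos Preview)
Your proposal is correct and follows essentially the same route as the paper: use Lemma \ref{lem:char-of-PC} to write every kernel element as $a$ times a $k^{\ell-1}$-periodic sequence, so that $b\bar c$ is periodic and $\gamma_{b,c}(0)$ exists, and then invoke Lemma \ref{lem:gamma-exists}. The paper's version is slightly terser (it does not spell out that $\cN_k(a)$ is finite and $\Lambda_i$-closed), but the argument is the same.
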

\begin{proof}
By Lemma \ref{lem:char-of-PC}, all sequences in $\cN_k(a)$ are products of $a$ and $k^{\ell-1}$-periodic sequences. Hence, there is a $k^{\ell-1}$-periodic sequence $h$ such that $b(n)c(n) = a(n)^2 h(n) = h(n)$ for all $n \in \NN_0$, and consequently
\[
	\gamma_{b,c}(0) = \lim_{N \to \infty}  \frac{1}{N} \sum_{n=0}^{N-1} h(n) = \frac{1}{k^{\ell-1}} \sum_{n=0}^{k^{\ell-1}-1} h(n) 
\] 
exists. Existence of $\gamma_{b,c}(m)$ for $m \in \NN$ now follows from Lemma \ref{lem:gamma-exists}.
\end{proof}

{\color{final}
Recall that $f = \ind_{\NN} \cdot \gammalog_a$ is $k$-regular by Theorem \ref{cor:gamma-is-reg}.
In principle, in order to decide if $f$ is identically zero, it is now enough to follow the arguments in Section \ref{sec:Correlation} to describe the structure of the $k$-kernel of $f$  and then apply Lemma \ref{lem:zero-criterion}. In practice, we essentially follow this route, but we also take advantage of the fact that $f$ is a $k$-regular sequence of a rather specific form.
}

\subsection{Recursive relations}\label{ssec:Imp-Recursive}{\color{final}
As a first step towards describing the recursive relations that define $f$, we introduce a set that spans $\cN_k(f)$, in analogy to Proposition \ref{prop:gamma-is-reg}. It will be convenient to introduce the restricted averages
\begin{equation}\label{eq:def-of-gamma(r,j)}
	\gammaR{r}(m) := \lim_{N \to \infty} \frac{k^\ell}{\log N} \sum_{n=0}^{N-1}\frac{1}{n+1} a(n)a(n+m) \ind_{k^\ell \NN_0+r}(n).
\end{equation} 
Note that these averages are well-defined thanks to Theorem \ref{thm:log-mean-exists}. Additionally, it follows from Lemma \ref{lem:char-of-PC} and Lemma \ref{lem:gamma-exists-PC} that the logarithmic averages can be replaced with unweighted averages:
\begin{equation}
	\gammaR{r}(m) = \lim_{N \to \infty} \frac{k^\ell}{N} \sum_{n=0}^{N-1} a(n)a(n+m) \ind_{k^\ell \NN_0+r}(n).
\end{equation} 
 As a direct consequence of the relevant definitions, we have
\begin{equation}\label{eq:423:1}
	f = \frac{\ind_{\NN}}{k^\ell} \cdot \sum_{r=0}^{k^{\ell}-1} \gammaR{r} = 
	\frac{1}{k^\ell} \sum_{q=1}^{k^\ell} \sum_{r=0}^{k^{\ell}-1} \ind_{k^\ell \NN_0 + q} \cdot \gammaR{r}.
\end{equation}
}

\begin{proposition}\label{prop:improved-bounds}\color{final}
Each sequence in $\cN_k(f)$ is a linear combination of the sequences $\ind_{k^\ell \NN_0 + q} \cdot S^e \gammaR{r}$, where $e \in \{0,1\}$, $0 \leq r < k^{\ell}$ and $0 \leq q \leq k^{\ell}$. In particular, $\dim \linspan_{\QQ} \cN_k(f) \leq 2k^{\ell}(k^\ell+1)$.
\end{proposition}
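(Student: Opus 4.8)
The plan is to show that the $\QQ$-span of the sequences $\ind_{k^\ell\NN_0+q}\cdot S^e\gamma_r$, with $0\le e\le 1$, $0\le r<k^\ell$, $0\le q\le k^\ell$, contains $f$ and is closed under all the operators $\Lambda_j$, $j\in\Sigma_k$. Once this is established, the $k$-kernel of $f$ — which is generated from $f$ by repeated application of the $\Lambda_j$ — is contained in this span, and the dimension bound $2k^\ell(k^\ell+1)$ follows by counting the generators: two choices of $e$, $k^\ell$ choices of $r$, and $k^\ell+1$ choices of $q$. The containment of $f$ itself is exactly \eqref{eq:423:1}, which already writes $f$ as a $\QQ$-linear combination of the $\ind_{k^\ell\NN_0+q}\cdot\gamma_r$ (taking $e=0$).

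The core computation is to apply $\Lambda_j$ to a generator $g=\ind_{k^\ell\NN_0+q}\cdot S^e\gamma_r$. The indicator part is easy: $\Lambda_j\ind_{k^\ell\NN_0+q}$ is again an indicator of an arithmetic progression, since $kn+j\equiv q\pmod{k^\ell}$ forces $j\equiv q\pmod k$ (giving the zero sequence if this fails) and otherwise $n\equiv q'\pmod{k^{\ell-1}}$ for the appropriate $q'$; I will need to massage this into one of the allowed moduli $k^\ell\NN_0+q''$, which is fine since a $k^{\ell-1}$-progression is a disjoint union of $k$-many $k^\ell$-progressions — at worst this multiplies the number of terms by a constant, which does not affect the stated bound as the bound only counts distinct generators, not the length of the combination. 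For the $\gamma_r$ part, I will express $\gamma_r(m)$ in terms of a correlation over a kernel and invoke Lemma \ref{lem:gammalog-recurrence} (in the guise already used for Proposition \ref{prop:gamma-is-reg}): applying $\Lambda_j$ and evaluating the floor $\floor{(j+i+e)/k}$, which is at most $1$ exactly as in the proof of Proposition \ref{prop:gamma-is-reg}, produces a combination of sequences $S^{e'}\gamma_{r'}$ with $e'\in\{0,1\}$ and $r'$ again in the correct range (the residue $r$ gets transformed by the $\Lambda_i$ bookkeeping since $\gamma_r$ is a correlation of $a$ restricted to a $k^\ell$-progression, and $a$ lies in a kernel closed under the $\Lambda_i$). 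Combining the two parts via the product rule $\Lambda_j(uv)=(\Lambda_j u)(\Lambda_j v)$ gives $\Lambda_j g$ as a $\QQ$-linear combination of generators of the required form.

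The main obstacle, and the point requiring the most care, is the bookkeeping for the residue $r$: one must check that restricting $a(n)a(n+m)$ to $n\equiv r\pmod{k^\ell}$ and then applying the Lemma \ref{lem:gammalog-recurrence} split interacts correctly with the congruence, i.e.\ that after the split one is again looking at a correlation of kernel sequences restricted to a $k^\ell$-progression, with a consistently updated residue. Concretely, writing $n=kn'+i$ the condition $n\equiv r\pmod{k^\ell}$ becomes a condition on $n'$ modulo $k^{\ell-1}$ together with $i\equiv r\pmod k$; splitting the residual $k^{\ell-1}$-progression back into $k^\ell$-progressions and absorbing the shift by $i$ into the shift operators $\Lambda_i$ on $a$ (using that $\cN_k(a)$ is closed under the $\Lambda_i$, by Lemma \ref{lem:char-of-PC}) yields generators of the allowed shape. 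I would carry out this verification once, carefully, for a single generator and then note that linearity and the product rule extend it to all of $\linspan_\QQ\cN_k(f)$; the dimension bound is then immediate from the count $2\cdot k^\ell\cdot(k^\ell+1)$.
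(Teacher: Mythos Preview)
Your plan is correct and matches the paper's: both arguments show that the $\QQ$-span of the proposed generators contains $f$ (via \eqref{eq:423:1}) and is closed under every $\Lambda_j$, the latter being exactly the content of Lemma~\ref{lem:rec-PC}.

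One wrinkle in your product-rule framing deserves a warning: $\Lambda_j(S^e\gamma_r)$ on its own is \emph{not} a $\QQ$-linear combination of the $S^{e'}\gamma_{r'}$. If you carry out the split of the summation variable $n=kn'+i$ as in Lemma~\ref{lem:gammalog-recurrence}, the factor $a(n+km+j+e)$ contributes, via Lemma~\ref{lem:b-is-periodic}, a factor $h(r+km+j+e)$ that is $k^{\ell-1}$-periodic in $m$ rather than constant; so $\Lambda_j(S^e\gamma_r)$ is a combination of the $S^{e'}\gamma_{r'}$ only with $k^{\ell-1}$-periodic coefficients. This coefficient collapses to the constant $h(r+q+e)$ precisely after you multiply back by $\Lambda_j\ind_{k^\ell\NN_0+q}=\ind_{k^{\ell-1}\NN_0+\floor{q/k}}$, which pins $m\bmod k^{\ell-1}$ to $\floor{q/k}$. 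The paper sidesteps this by computing $\Lambda_j$ on the full product in one stroke (Lemma~\ref{lem:rec-PC}); your factored approach also succeeds, but the two halves have to be recombined before you can read off constant scalars.
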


{\color{final}
The proof of the above proposition will follow directly once we describe the behaviour of the base sequences $\ind_{k^{\ell} \NN_0 + q}\cdot S^e\gammaR{r}$ under the operators $\Lambda_i$ ($i \in \Sigma_k$).
To simplify this description, it will be convenient to introduce the auxiliary sequence $h \colon \NN_0 \to \{+1,-1\}$, given by
\begin{equation}\label{eq:def-of-b}
	h(n) := a(n)/a(\floor{n/k}).
\end{equation}
}

The following basic fact is analogous to \cite[Lemma 2.1]{ZhengPengKamae-2018}.	

\begin{lemma}\label{lem:b-is-periodic}\color{final}
	The sequence $h$ given by \eqref{eq:def-of-b} is $k^{\ell}$-periodic.
\end{lemma}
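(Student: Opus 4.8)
The plan is to unwind the definition of $h$ and reduce the claimed periodicity to a statement about patterns. Recall $h(n) = a(n)/a(\floor{n/k})$, and write $n = km+i$ with $i\in\Sigma_k$, so that $\floor{n/k}=m$ and $h(km+i) = a(km+i)/a(m) = \Lambda_i a(m)/a(m)$. The claim is that this depends only on $n \bmod k^{\ell}$, equivalently that the sequences $h_i := \Lambda_i a / a$ (for $i \in \Sigma_k$) are each $k^{\ell-1}$-periodic in $m$. But this is exactly the content established inside the proof of the implication \eqref{it:631:A} $\Rightarrow$ \eqref{it:631:B} of Lemma \ref{lem:char-of-PC}: there we computed that $\Lambda_i a(m)/a(m) = -1$ if the length-$\ell$ suffix of $\mathtt{0}^{\ell-1}(km+i)_k$ lies in $A$, and $+1$ otherwise, and this suffix depends only on the last $\ell-1$ digits of $(m)_k$ together with $i$, hence only on $m \bmod k^{\ell-1}$ and $i$.

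So the first step is to invoke Lemma \ref{lem:char-of-PC} (applicable since $A \subset \Sigma_k^\ell$ is admissible and, by Lemma \ref{lem:PC-is-closed} / normalization, we may assume $a(0)=+1$; if not, multiply through by the constant sign, which does not affect $h$) to get that $h_i = \Lambda_i a/a$ is $k^{\ell-1}$-periodic for every $i \in \Sigma_k$. The second step is to assemble these: for $n, n' \in \NN_0$ with $n \equiv n' \pmod{k^\ell}$, write $n = km+i$, $n' = km'+i'$ with $i,i' \in \Sigma_k$; congruence mod $k^\ell$ forces $i = i'$ (looking mod $k$) and then $m \equiv m' \pmod{k^{\ell-1}}$. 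Hence $h(n) = h_i(m) = h_i(m') = h(n')$, which is the desired $k^\ell$-periodicity.

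I do not anticipate a genuine obstacle here; the only mild subtlety is bookkeeping about the $a(0)=+1$ normalization (the quotient $h$ is insensitive to replacing $a$ by $-a$, so this is harmless) and making sure the index arithmetic $n \equiv n' \pmod{k^\ell} \Rightarrow m \equiv m' \pmod{k^{\ell-1}}$ with equal low digit is stated cleanly. An alternative, more self-contained route would be to argue directly from the pattern-counting identity: for any $v \in A$ (so $\abs{v} = \ell$) and any $n$, $\#(v,n) - \#(v,\floor{n/k})$ equals $1$ if $v$ is the length-$\ell$ suffix of $\mathtt{0}^{\ell-1}(n)_k$ and $0$ otherwise, whence $\#(A,n) - \#(A,\floor{n/k}) \equiv \ind_{A}\big(\text{suffix}_\ell(\mathtt0^{\ell-1}(n)_k)\big) \pmod 2$; the suffix of length $\ell$ of $\mathtt0^{\ell-1}(n)_k$ is determined by $n \bmod k^\ell$, so $h(n) = (-1)^{\#(A,n)-\#(A,\floor{n/k})}$ is $k^\ell$-periodic. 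I would likely present this second version, as it is short and avoids re-deriving the normalization from Lemma \ref{lem:char-of-PC}.
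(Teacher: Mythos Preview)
Your proposal is correct and takes essentially the same approach as the paper, which simply says ``Follows immediately from Lemma~\ref{lem:char-of-PC}''; you have just unwound what that entails (and your alternative pattern-counting route is equally valid). One minor remark: the normalization worry is moot, since by definition $a_A(0)=(-1)^{\#(A,0)}=+1$ for any admissible $A$.
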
\color{final}
\begin{proof}
Follows immediately from Lemma \ref{lem:char-of-PC}.
\end{proof}

\begin{lemma}\label{lem:rec-PC}\color{final}
	Let $e \in \{0,1\}$, $i \in \Sigma_k$, $0 \leq q \leq k^{\ell}$ and $0 \leq r < k^\ell$. If $i \neq q \bmod{k}$ then $\Lambda_i \ind_{k^\ell \NN_0 + q} = 0$. If $i = q \bmod{k}$ then 
	\begin{equation}\label{eq:164:1}
		\Lambda_i\bra{ \ind_{k^\ell \NN_0+q} \cdot S^{e} \gammaR{r} } = 
		\frac{h(r)h(r+q+e)}{k} \sum_{q'} \sum_{r'} \ind_{k^\ell \NN_0 + q'} \cdot S^{e'}\gammaR{r'},
	\end{equation}
	where the value of $e$ and the ranges of the summations are given by
	\[ e' := \floor{ \frac{i+e+ (r \bmod k)}{k}},\quad q' \in k^{\ell-1} \Sigma_k + \floor{q/k},\quad r' \in k^{\ell-1} \Sigma_k + \floor{r/k}.\]
\end{lemma}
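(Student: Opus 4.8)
The plan is to compute $\Lambda_i\bra{\ind_{k^\ell\NN_0+q}\cdot S^e\gamma_r}(n)$ directly from the definitions and match terms. First, the factor $\ind_{k^\ell\NN_0+q}$: since $\Lambda_i\ind_{k^\ell\NN_0+q}(n) = \ind_{k^\ell\NN_0+q}(kn+i)$ and $kn+i\equiv i\pmod k$ while $q$ has residue $q\bmod k$ modulo $k$, the indicator vanishes identically unless $i\equiv q\pmod k$. This disposes of the first claim. So assume $i\equiv q\pmod k$; then $kn+i\in k^\ell\NN_0+q$ iff $n\in k^{\ell-1}\NN_0+\floor{q/k}$, which already explains why $q'$ ranges over $k^{\ell-1}\Sigma_k+\floor{q/k}$ (there are $k$ such residues $q'$ mod $k^\ell$, and we will write $\ind_{k^{\ell-1}\NN_0+\floor{q/k}}=\sum_{q'}\ind_{k^\ell\NN_0+q'}$).

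Next I would handle the $S^e\gamma_r$ part. Writing $g_r := S^e\gamma_r$, so $g_r(n)=\gamma_r(n+e)$, I want to express $\gamma_r(kn+i+e)$ in terms of shifted $\gamma_{r'}$ evaluated at arguments of the form $n+e'$. Plugging $m=kn+i+e$ into the defining average \eqref{eq:def-of-gamma(r,j)}, the summand involves $a(N')a(N'+kn+i+e)\ind_{k^\ell\NN_0+r}(N')$. I would split the summation variable $N'=kn'+j$ over residues $j\in\Sigma_k$ as in the proof of Lemma \ref{lem:gammalog-recurrence}; the indicator $\ind_{k^\ell\NN_0+r}(kn'+j)$ forces $j\equiv r\pmod k$, i.e. $j = r\bmod k$, and $n'\in k^{\ell-1}\NN_0+\floor{r/k}$. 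Then I use the defining relation \eqref{eq:def-of-b} of $h$ to peel off one digit: $a(kn'+j) = h(kn'+j)\,a(n')$ and similarly $a(kn'+j+kn+i+e)$, after grouping $kn'+j+kn+i+e = k(n'+n+\floor{(j+i+e)/k})+((j+i+e)\bmod k)$, equals $h\bigl((j+i+e)\bmod k \text{ shifted appropriately}\bigr)\cdot a(n'+n+e')$ where $e' = \floor{(i+e+(r\bmod k))/k}$ (using $j=r\bmod k$). By Lemma \ref{lem:b-is-periodic}, $h$ is $k^\ell$-periodic, so $h(kn'+j)=h(j)=h(r\bmod k)$ — but more carefully, one must track $h$ at the residue $r$ and at the residue of $kn'+j+kn+i+e$, which reduces (mod $k^\ell$) to $r+q+e$ when $i\equiv q$; this produces exactly the prefactor $h(r)h(r+q+e)/k$ (the $1/k$ coming from the rescaling of $1/(n+1)$ exactly as in Lemma \ref{lem:gammalog-recurrence}). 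After this substitution the inner average becomes $\gamma_{r'}(n+e')$ for $r'$ ranging over the $k$ residues in $k^{\ell-1}\Sigma_k+\floor{r/k}$, whence the double sum over $q'$ and $r'$.

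The main obstacle — really the only delicate point — is the careful bookkeeping of indices: checking that $e'$ as defined lies in $\{0,1\}$ (which follows since $i,e,(r\bmod k)$ are each at most $k-1$, $1$, $k-1$ respectively, giving $e'\le\floor{(2k-1)/k}=1$), that $S^{e'}\gamma_{r'}$ is again a base sequence of the allowed shape, and — most importantly — that the arguments of $h$ come out as $r$ and $r+q+e$ rather than something $n$-dependent. For the latter I would note that $kn'+j \equiv j \equiv r \pmod{k^\ell}$ only when $n'$ ranges over a full residue class mod $k^{\ell-1}$, so $h(kn'+j)$ is genuinely constant equal to $h(r)$ by $k^\ell$-periodicity, using that $j=r\bmod k$ together with $n' \in k^{\ell-1}\NN_0 + \floor{r/k}$ forces $kn'+j \equiv r \pmod{k^\ell}$; likewise $k(n'+n+\cdots)+((j+i+e)\bmod k) \equiv r+q+e \pmod{k^\ell}$ after using $i\equiv q\pmod k$ and the range of $n'$. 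Once these congruences are verified, equation \eqref{eq:164:1} follows by collecting terms, and Proposition \ref{prop:improved-bounds} is immediate: \eqref{eq:423:1} exhibits $f$ as a linear combination of the base sequences $\ind_{k^\ell\NN_0+q}\cdot S^e\gamma_r$ with $e\in\{0,1\}$, $0\le r<k^\ell$, $0\le q\le k^\ell$, and Lemma \ref{lem:rec-PC} shows this finite family (of size at most $2k^\ell(k^\ell+1)$) spans a space closed under every $\Lambda_i$, hence contains $\cN_k(f)$.
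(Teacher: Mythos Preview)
Your proposal is correct and follows essentially the same approach as the paper's proof, which is a two-line sketch: ``The case $e = 0$ follows by a standard adaptation of the proof of Lemma~\ref{lem:gammalog-recurrence}. Then, the case $e = 1$ is derived using Lemma~\ref{lem:ops-compose}.'' You have written out precisely that standard adaptation, treating $e\in\{0,1\}$ uniformly rather than reducing $e=1$ to $e=0$ via the shift-commutation lemma; this is a cosmetic difference only.

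One small imprecision worth tightening: in your bookkeeping for the second factor $h$, you write that $k(n'+n+\cdots)+((j+i+e)\bmod k)\equiv r+q+e\pmod{k^\ell}$ ``after using $i\equiv q\pmod k$ and the range of $n'$.'' The range of $n'$ gives $kn'+j\equiv r\pmod{k^\ell}$, but to get $kn+i\equiv q\pmod{k^\ell}$ (not just mod $k$) you need the full constraint coming from the indicator $\ind_{k^\ell\NN_0+q}(kn+i)=1$, i.e.\ that $n\in k^{\ell-1}\NN_0+\floor{q/k}$. You established this constraint in your first paragraph, so the argument is complete; just make the dependence explicit when you invoke it. (Equivalently: the identity $\gamma_r(kn+i+e)=\frac{h(r)h(r+kn+i+e)}{k}\sum_{r'}\gamma_{r'}(n+e')$ holds for all $n$, with an $n$-dependent $h$-factor, and only after multiplication by the indicator does $h(r+kn+i+e)$ collapse to the constant $h(r+q+e)$.)
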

\begin{proof}
	The case $e = 0$ follows by a standard adaptation of the proof of Lemma \ref{lem:gammalog-recurrence}. Then, the case $e = 1$ is derived using Lemma \ref{lem:ops-compose}.
\end{proof}

\subsection{Small shifts}\label{ssec:Imp-Small}{\label{ssec:SmallShifts}{\color{good}
	Bearing in mind that we hope to apply Lemma \ref{lem:zero-criterion}, we need to be able to compute the values $S^e \gammaR{r}(0) = \gammaR{r}(e)$ for $e \in \{0,1\}$ and $0 \leq r \leq k^\ell$. This can, in principle, be accomplished by straightforward adaptations of the arguments in Lemma \ref{lem:gamma-exists} and Lemma \ref{lem:gamma-exists-PC}. Here, we discuss the practical details of how the computations are performed. Recall that $\gammaR{r}(0) = 1$, so we only need to compute $\gammaR{r}(1)$.
	
	For $0 \leq r < k^\ell$, let $\nu = \nu(r)$ denote the first position where a digit distinct from $k-1$ appears in the base-$k$ expansion $(r)_k$; if $r = k^\alpha-1$ for some $\alpha \geq 0$ then $\nu = \alpha$. We consider $r$ in nondecreasing order with respect to $\nu(r)$. We have three ranges to consider: $r = 0$, $1 \leq r < \ell$ and $r = \ell.$
	
	If $\nu(r) = 0$ then it follows from Lemma \ref{lem:rec-PC} that
	\begin{equation}\label{eq:104:1}
		\gammaR{r}(1) = \frac{h(r)h(r+1)}{k} \sum_{r'} \gammaR{r'}(0) = h(r)h(r+1);
	\end{equation} 
	here and elsewhere, the summation over $r'$ runs through $r' \in k^{\ell-1} \Sigma_k + \floor{r/k}$. Since we can readily compute $h(r)$ and $h(r+1)$, we can compute $\gammaR{r}(1)$.
	
	If $1 \leq \nu(r) < \ell$ then another application of Lemma \ref{lem:rec-PC} yields
	\begin{equation}\label{eq:104:2}
		\gammaR{r}(1) = \frac{h(r)h(r+1)}{k} \sum_{r'} \gammaR{r'}(1).
	\end{equation} 
	For all $r'$ appearing in the above sum we have $\nu(r') = \nu(r)-1$, and hence $\gammaR{r'}(1)$ has been previously computed. Hence, again, we can directly compute $\gammaR{r}(1)$.
	
	Finally, if $\nu = \ell$ (meaning that $r = k^\ell-1$) then \eqref{eq:104:2} continues to hold, and we have $\nu(r') = \ell -1$ for all summands on the right-hand-side except for the one corresponding to $r' = r$. Hence, we can compute $\gammaR{r}$ as
	\begin{equation}\label{eq:104:3}
		\gamma_{k^\ell-1}(1) = \frac{1}{k h(r)h(r+1)-1} \sum_{i=0}^{k-2} \gamma_{k^{\ell-2}(ki+1)-1}(1).
	\end{equation} 	
}

{\color{good}
\subsection{Basis construction}\label{ssec:Imp-Basis}
Recall that our general strategy calls for a construction of a spanning set of $\linspan_{\QQ} \cN_k(f)$.
For technical reasons, it appears to be slightly more convenient and efficient to instead work with the potentially larger space
\[
	\cM := \linspan_{\QQ}\set{ 1_{k^{\ell} \NN_0 + q} \cdot g }{ g \in \cN_k(f),\ 0 \leq q \leq k^{\ell}}.
\]
It remains true that $f = 0$ if and only if $\cM = \{0\}$, and that $\cM$ is closed under $\Lambda_i$ for all $i \in \Sigma_k$. Additionally, $\cM$ admits a decomposition
\[
	\cM = \bigoplus_{q=0}^{k^\ell} \cM_q, \qquad \cM_q := \eta_q \cdot \linspan_{\QQ} \cN_k(f),
\]
where the sequences $\eta_q$ are given by
\[ 
\eta_0 = \ind_{\{0\}},\qquad \eta_q = \ind_{k^\ell \NN_0 + q} \text{ for } 1 \leq q \leq k^{\ell}.  
\]
By Lemma \ref{lem:zero-criterion}, to show that $\cM = \{0\}$ it suffices to verify that $g(0) = 0$ for each $g \in \cM$, which is trivially satisfied for $g \in \cM_q$ for all $1 \leq q \leq k^{\ell}$.

We proceed to construct a list of sequences $f_1,f_2,\dots \in \cM$ which spans $\cM$. Additionally, we ensure that for each $t \geq 1$, the sequence $f_t$ belongs to $\cM_{q_t}$ for some $0 \leq q_t \leq k^\ell$ and we keep track the value of $q_t$. By Proposition \ref{prop:improved-bounds}, each $f_t$ has a decomposition
\begin{equation}\label{eq:105:4}
	f_t = \sum_{r=0}^{k^\ell-1} \sum_{e=0}^1 w_{r,e}^{(t)} \eta_t S^e\gammaR{r},
\end{equation}
for some coefficients $w_{r,e}^{(t)}$, which we also keep track of. While we cannot ensure that $f_1,f_2,\dots$ are linearly independent (in fact, we are primarily interested in the case when $f_1=f_2=\dots=0$), we will ensure that  for each $1 \leq q \leq k^\ell$, the (multi-)set of coefficient vectors $\set{ w^{(t)} }{ q_t = q} \subset \RR^{2k^\ell}$ is linearly independent.

We start by setting for $1 \leq t \leq k^\ell$,
\begin{equation}\label{eq:105:1}
f_t = \ind_{k^\ell \NN_0 + t} \cdot \sum_{r=0}^{k^\ell} \gammaR{r},\quad q_t = t,
\end{equation}
and accordingly $w^{(t)}_{r,e} = \ind_{\{0\}}(e)$ ($0 \leq r < k^\ell$, $e \in \{0,1\}$).

Suppose next that at a certain stage we have constructed $f_1,f_2,\dots,f_v$ and that for all $1 \leq t \leq u$ we have ensured that $\Lambda_i f_t \in \linspan_{\QQ}\{f_1,f_2,\dots,f_v\}$ for all $i \in \Sigma_k$. (Initially, $v = k^\ell$ and $u = 0$.) If $u = v$ then $\linspan_{\QQ}\{f_1,f_2,\dots,f_v\}$ is a subset of $\cM$ that is closed under $\Lambda_i$ ($i \in \Sigma_k$) and under multiplication by $\ind_{k^\ell \NN_0 + q}$ ($0 \leq q \leq k^\ell$), hence $\linspan_{\QQ}\{f_1,f_2,\dots,f_v\} = \cM$ and the construction is complete.

Let us next consider the case when $u < v$. Put $q = q_{u+1}$, $g = f_{u+1}$ and $w = w^{(u+1)}$. Recall that the only value of $i$ for which $\Lambda_i g$ could be non-zero is $i = q \bmod{k}$. If $q = 0$ then $g = g(0) \ind_{\{0\}}$. Hence, either $g(0) \neq 0$, in which case $a$ is not noncorrelated and we are done; or $g(0) = 0$, in which case $g = 0$ and so $\Lambda_i g = 0$ as well.
Suppose now that $1 \leq q \leq k^\ell$. Applying Lemma \ref{lem:rec-PC}, we obtain a representation of $\Lambda_i g$ in the form
\begin{equation}\label{eq:105:2}
	\Lambda_i g = \sum_{q'} \sum_{r'} \sum_{e'} w'_{q',r',e'} \ind_{k^\ell \NN_0 + q'} \cdot S^{e'}\gammaR{r'}, 
\end{equation}
where the ranges of summation are given by $0 \leq q' \leq k^{\ell}$, $0 \leq r' < k^{\ell}$ and $0 \leq e' \leq 1$, and the coefficients $w'$ are given by explicit formulae coming from \eqref{eq:164:1}. Bearing in mind that $\ind_{k^\ell \NN_0} = \ind_{k^\ell \NN} + \ind_{\{0\}}$, we find the decomposition
\begin{equation}\label{eq:164:8}
	\Lambda_i g = \sum_{q'} g_{q'}',\qquad g_{q'}' = \sum_{r'} \sum_{e'} w''_{q',r',e'} \eta_{q'} S^{e'}\gammaR{r'}, 
\end{equation}
where the coefficients $w''_{q',r',e'}$ are given by:
\begin{align}\label{eq:105:3}
w''_{q',r',e'} = w'_{q',r',e'} \text{ if } q' \neq k^\ell,\qquad w''_{k^{\ell},r',e'} = w'_{k^{\ell},r',e'} + w'_{0,r',e'}.
\end{align}
For each $q'$, we append $g'_{q'}$ to the list $f_1,f_2,\dots,f_v$ if (and only if) 
\begin{equation}\label{eq:164:9}
	\bra{ w''_{q',r',e'} }_{r',e'} \not \in \linspan_{\QQ} \set{ \bra{ w^{(t)}_{r,e} }_{r,e} }{1 \leq t \leq v,\ q_t = q'}. 
\end{equation}
If \eqref{eq:164:9} holds then we also record $g'_{q'} \in \cM_{q'}$ (that is, we append $q'$ to the list $q_1,q_2,\dots, q_v$) and that the decomposition of $g'_{q'}$ as the sum of basis sequences is given by \eqref{eq:164:8} (what is, we append $w''_{q'}$ to the list $w^{(1)},w^{(2)},\dots, w^{(v)}$. Each time a new sequence is added, $v$ increases by $1$ and after all $q'$ have been processed, $u$ increases by $1$.

The linear independence condition \eqref{eq:164:9} ensures that for each $1 \leq q \leq k^\ell$, there are at most $2k^\ell$ values of $t$ with $q_t = q$, and hence the construction needs to terminate after a bounded number of steps. As the result, we either find, for some $t \geq 1$, a sequence $f_t \in \cM$ with $f_t(0) \neq 0$ (in which case $a$ is not noncorrelated) or we construct a finite list of sequences $f_1,f_2,\dots,f_N \in \cM$ that spans $\cM$ and satisfies $f_t(0) = 0$ for all $1 \leq t \leq N$ (in which case $a$ is noncorrelated). In either case, we are able to determine whether $a$ is noncorrelated. 
}

\subsection{Complexity}\label{ssec:Imp-Complex}
We now provide quantitative estimates for the amount of computational power needed to verify if the \pc{} sequence $a$ is noncorrelated using the method described above. Throughout, we treat $k$ as fixed, and hence are interested in the regime $\ell \to \infty$. It will be convenient to introduce, for a function $F \colon \NN \to \RR_{>0}$, the shorthand $\tilde O( F(\ell))$ to denote $O(\ell^{O(1)} F(\ell))$. Thus, for instance, addition or multiplication of two integers of size $O(k^\ell)$ can be performed using $\tilde O(1)$ operations.

At several points, we need to compute the values of $a(n)$ where $n = O(k^\ell)$. For a word $w \in \Sigma_k^*$ with length $\abs{w} \leq \ell$, computing $\# (n,w)$ directly from the definition requires $\tilde O(1)$ operations. Since $\abs{A} \leq k^\ell$, the values $\#(n,A)$ and $a(n)$ can be computed in time $\tilde O(k^\ell)$ . Consequently, we can also compute $h(n)$ in time $\tilde O(k^\ell)$.

Following the steps in subsection \ref{ssec:Imp-Small}, we compute $\gamma^{(r)}(1)$ for all $0 \leq r < k^\ell$. It takes $\tilde O(k^\ell)$ operations to write the values of $r$ ($0 \leq r < k^\ell$) in an order consistent with $\nu(r)$. Note that each of the formulae \eqref{eq:104:1}, \eqref{eq:104:2}, \eqref{eq:104:3} produces the corresponding value of $\gamma^{(r)}(1)$ using $\tilde O(1)$ arithmetic operations on rational numbers. One can also check by a simple inductive argument that all denominators and numerators that appear in these computations are bounded by $O(k^\ell)$, and hence each arithmetic operation takes only $\tilde O(1)$ basic operations. We also note that all the denominators take the form $(k \pm 1) k^\alpha$.

We next proceed to the computation of the sequences $f_t$ ($t=1,2,3,\dots$) in subsection \ref{ssec:Imp-Basis}. Strictly speaking, we compute the sequence $w^{(t)}$, which uniquely determine $f_t$ via \eqref{eq:105:4}, and the auxiliary sequence $q_t$. For $t \leq k^\ell$, the explicit formula \eqref{eq:105:1} allows us to compute $w^{(t)}$ and $q_t$ with $\tilde O(k^{2\ell})$ operations (note that $w^{(t)} = \big(w^{(t)}_{r,e}\big)_{r,e}$ has $k^{2\ell}$ entries, so this is the least number of operations possible). 

Let us now consider the amount of computation required to compute $f_t$ for $t > k^\ell$. 
Consider any $u,v$, as in the iterative procedure in second half of subsection \ref{ssec:Imp-Basis}. We note that the application of Lemma \ref{lem:rec-PC} used to compute $w'$ in \eqref{eq:105:2} requires no more than $\tilde O(k^{3\ell})$ arithmetic operations (for each of $O(k^\ell)$ summands in the decomposition of $g$, we substitute a sum of size $O(k^{2\ell})$). Once $w'$ is computed, it only takes $\tilde O(k^{2\ell})$ operations to compute $w''$. 
 Then, for each of $O(k^\ell)$ values of $q'$, in order to verify if $g'_{q'}$ should be appended to the list $f_1,f_2,\dots$, we need to verify if the corresponding vector of coefficients belongs to a certain linear subspace of $\RR^{2k^\ell}$, see \eqref{eq:164:9}. Keeping track of how much the complexity increases in each step of the construction, we see that for each $t > k^{\ell}$, the entries of $w^{(t)}$ are rational numbers whose numerators are $\tilde O(k^{3t})$, and whose denominators are $O(k^{t})$ and divide $(k^2-1)k^\alpha$ for some integer $\alpha$. Thus, in \eqref{eq:164:9} we may scale all of the relevant vectors by a factor of $(k^2-1)k^{O(u)}$, leaving us with the task of verifying if an integer-valued vector belongs to the span of other integer-valued vectors. The latter task is well-known to have polynomial complexity (with respect to dimensions and lengths of representations of entries), see e.g.{} \cite[Chpt.{} 16]{BCS-book}. Hence, for each $q'$ in order to decide if $g'_{q'}$ should appended, we perform $\tilde O(k^{O(\ell)}) = k^{O(\ell)}$ operations. Consequently, the number of operations needed to process the step corresponding to the index $u$ is $k^{O(\ell)}$.
 
Because of the linear independence conditions discussed at the end of subsection \ref{ssec:Imp-Basis}, the total number of the sequences $f_1,f_2,\dots$ we construct is at most $2k^{2\ell+1}$. It follows that in total, we perform at most $k^{O(\ell)}$ operations.

\color{good}
\section{\Sinv{} sequences}

We now turn to the classification of \sinv{} \pc{} sequences. Throughout, let $A \subset \Sigma_k^*$ be a set of patterns with no leading or trailing zeros, and let $a = a_A$ be the corresponding \pc{} sequence. We also retain the notation from Section \ref{sec:Implementation}, specifically the coefficients $\gamma_r$ defined in \eqref{eq:def-of-gamma(r,j)}. We let $\ell = \max_{v \in A} \abs{v}$ denote the length of $a$, and we assume that $\ell \geq 2$.

The following condition turns out to be closely connected to the question of whether $a$ is noncorrelated: 
\begin{equation}\label{eq:saturated}\tag{$\dagger$}
  \abs{ A(ui_0)^{-1} \oplus A(ui_1)^{-1}} = \frac{k}{2} \text{ for each } u \in \Sigma_k^{\ell-2} \text{ and } i_0,i_1 \in \Sigma_k \text{ with } i_0 \neq i_1.
\end{equation}
Above, using the standard notation from semigroup theory, for a word $u \in \Sigma_k^*$ and a set $X \subset \Sigma_k^*$, we let $Xu^{-1} := \set{v \in \Sigma_k^*}{vu \in X}$.

\begin{remark}\label{rmk:saturated-2}
	The condition \eqref{eq:saturated} can be stated in simpler terms when $k = 2$. Then, necessarily, $\{i_0,i_1\} = \{\mathtt{0},\mathtt{1}\}$ and since $A$ has no trailing zeros, $A\mathtt{0}^{-1} = \emptyset$. Hence, \eqref{eq:saturated} says that $\abs{ A(u\mathtt{1})^{-1} } = 1$ for all $u \in \Sigma_k^{\ell-2}$. Because all patterns in $A$ have length $\leq \ell$, $A(u\mathtt{1})^{-1} \subset  \{\mathtt{0},\mathtt{1}\}$; and because $A$ has no leading zeros, $\mathtt{0} \not \in A(u\mathtt{1})^{-1}$. Thus, \eqref{eq:saturated} reduces to the statement that $\mathtt{1} u \mathtt{1} \in A$ for all $u \in \Sigma_k^{\ell-2}$, that is, $\mathtt{1}\Sigma_2^{\ell-2}\mathtt{1} \subset A$. This is precisely the assumption that appears in Theorem \ref{thm:C}.
\end{remark}

\begin{remark}\label{rmk:saturated-large-k}
	For general $k \geq 2$, it is not a priori clear if there exists a set of patterns $A$ such that \eqref{eq:saturated} holds. Fix $u \in \Sigma_k^{\ell-2}$ and consider the matrix $M = \big(M_{i,j}^{(u)}\big)_{i,j=0}^{k-1}$ where $M_{i,j}^{(u)} = -1$ if $iuj \in A$ and $M_{i,j}^{(u)} = +1$ otherwise. Then \eqref{eq:saturated} says that $M^{\mathrm{T}}M = k I$, where $I$ denotes the identity matrix, meaning that $M$ is a Hadamard matrix. Additionally, $M^{(u)}_{i,j} = +1$ if $i = 0$ or $j = 0$, meaning that $M$ is normalized. Conversely, given any normalized Hadamard matrix $M'$, one can easily reconstruct $A$ so that $M = M'$ for each choice of $u \in \Sigma_k^{\ell-2}$. Thus, it is possible to satisfy the condition \eqref{eq:saturated} if and only if there is at least one Hadamard matrix of dimension $k$.
	
	 The question of existence of Hadamard matrices of a given dimension has long been investigated. They are easily constructed when $k$ is a power of $2$ through a tensor-power construction. More generally, given Hadamard matrices of dimensions $k$ and $k'$ one can construct a Hadamard matrix of dimension $k \cdot k'$.  It is conjectured that Hadamard matrices exist for $k=1,2$ and all $k$ divisible by $4$. So far, this has been confirmed for $k < 668$. See e.g.{} \cite[Chpt.{} V]{ChapmanHall-book} for further discussion.
\end{remark}

The main goal of this section is to prove a slightly more general variant of Theorem \ref{thm:C}. The second part of this theorem asserts that if $a$ is noncorrelated, $k = 2$ and $\ell \leq 5$ then \eqref{eq:saturated} holds. This is verified by exhaustive search\footnote{Code available from the author.}, using the methods developed in Section \ref{sec:Implementation}. The remaining part of Theorem \ref{thm:C} follows from the following result, whose proof will occupy the remainder of this section.

\begin{proposition}\label{prop:sat->NC}
	Suppose that \eqref{eq:saturated} holds. Then the sequence $a$ is noncorrelated. 
\end{proposition}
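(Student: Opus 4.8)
The plan is to show that $\gamma_r(m) = 0$ for all $r$ and all $m \geq 1$, which by \eqref{eq:423:1} immediately gives $f \equiv 0$ on $\NN$, i.e.\ $a$ is noncorrelated. The natural tool is the recursion in Lemma \ref{lem:rec-PC}: applying $\Lambda_i$ to $\ind_{k^\ell\NN_0+q}\cdot S^e\gamma_r$ breaks a correlation coefficient into an average of $k$ correlation coefficients at ``digit-peeled'' indices, weighted by factors of the form $h(r)h(r+q+e)$, where $h$ is the $k^\ell$-periodic sequence from \eqref{eq:def-of-b}. Because the condition \eqref{eq:saturated} controls precisely the tails of patterns of length $\ell$, it should translate — via the description of $h$ in terms of $A$ — into a statement that the relevant weight vectors, summed over a fiber $r' \in k^{\ell-1}\Sigma_k + \floor{r/k}$ or over the choice of last digit, cancel in pairs. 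Concretely, I would first rewrite $h(n)$ explicitly: $h(n) = -1$ exactly when the length-$\ell$ suffix of $\mathtt 0^{\ell-1}(n)_k$ lies in $A$, so for $n$ with a fixed ``prefix'' $u \in \Sigma_k^{\ell-2}$ and last two digits $j_0 j_1$, we have $h$-values governed by the Hadamard matrix $M^{(u)}$ of Remark \ref{rmk:saturated-large-k}.

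Next I would run the recursion to compute $\gamma_r(m)$ for $m \geq 1$ by descent on $m$. The base of the induction is $m$ with $m < k$, where peeling one digit reduces $m$ to $0$ (for most residues $i$) plus one boundary term with $m' = 1$; here one uses $\gamma_{r'}(0) = 1$ and must check that the weighted sum $\frac{1}{k}\sum_{r'} h(r)h(r+q+e)$ vanishes. The key algebraic fact I expect to need is: for fixed $u$, summing $M^{(u)}_{i,j_1} M^{(u)}_{i',j_1}$ over $j_1 \in \Sigma_k$ gives $0$ when $i \neq i'$ (orthogonality of rows of the Hadamard matrix) — this is exactly \eqref{eq:saturated}. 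So when two indices $r$ involved in a correlation differ in a way that forces distinct rows of some $M^{(u)}$, the contributions cancel. For the inductive step one iterates Lemma \ref{lem:rec-PC} enough times (as in the proof of Lemma \ref{lem:gamma-exists}) to push the shift down to $m' \in \{0,1\}$ while accumulating a product of weights; grouping the terms in the final sum by the digit that was ``created'' and invoking row-orthogonality should kill the whole expression. It may be cleaner to argue directly that $\cN_k(f)$, restricted via the $\eta_q$ decomposition, consists entirely of sequences vanishing at $0$, by showing the spanning set from Proposition \ref{prop:improved-bounds} collapses: every $\gamma_r(1)$ computed via \eqref{eq:104:1}–\eqref{eq:104:3} comes out $0$ under \eqref{eq:saturated}, and then Lemma \ref{lem:rec-PC} propagates this.

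Let me sketch the cleanest route more precisely. Using \eqref{eq:104:1}, $\gamma_r(1) = h(r)h(r+1)$ when $\nu(r) = 0$; I claim \eqref{eq:saturated} forces $h(r)h(r+1) = 0$ is impossible since $h$ is $\pm1$-valued, so instead the cancellation must happen one level up. The honest statement is that the \emph{aggregate} $\sum_{r=0}^{k^\ell-1}\gamma_r(m)$ — not each term — vanishes for $m \geq 1$, because $f = \frac{\ind_\NN}{k^\ell}\sum_r \gamma_r$. So I would instead track $F(m) := \sum_r \gamma_r(m)$ and $F_q(m) := $ the analogous quantity appearing after one application of $\Lambda$, show via Lemma \ref{lem:rec-PC} that $F(m)$ satisfies a closed recursion whose coefficients involve $\sum_{j}M^{(u)}_{i,j}M^{(u)}_{i',j}$, and conclude by row-orthogonality (i.e.\ \eqref{eq:saturated}) that $F(m) = 0$ for every $m \geq 1$, with the $m$-descent terminating at $F(0) = k^\ell \neq 0$ contributing only to $m = 0$.

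The main obstacle, I expect, is bookkeeping: correctly matching the ``digit peeled off'' on the left of the pattern window with the row index $i$ of the Hadamard matrix $M^{(u)}$, and on the right with the column index, so that the orthogonality relation \eqref{eq:saturated} applies in the right variable. One has to be careful that the relevant pairs of indices $(r, r+m)$ or $(r', r'+m')$ that get summed together actually differ in the appropriate digit to produce distinct rows (or columns) — when they happen to give the \emph{same} row, one needs the diagonal relation $\sum_j (M^{(u)}_{i,j})^2 = k$, which contributes to the ``$\gamma_r(0)$'' boundary term and is consistent with $F(0) = k^\ell$. Handling the boundary residues $r = k^\alpha - 1$ (where the recursion from \eqref{eq:104:3} has a self-referential term) requires the same care as in Section \ref{ssec:SmallShifts}. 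Everything else is a routine, if lengthy, application of Lemma \ref{lem:rec-PC} and Lemma \ref{lem:gamma-exists}.
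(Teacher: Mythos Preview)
Your instincts point in the right direction, but the plan as written has a real gap. You correctly abandon the claim that each $\gamma_r(m)$ vanishes for $m\geq 1$ --- indeed $\gamma_r(1)=h(r)h(r+1)=\pm 1$ when $r\not\equiv k-1\pmod k$. The trouble is with your fallback: tracking only $F(m)=\sum_r\gamma_r(m)$ does \emph{not} yield a closed recursion. When you apply Lemma~\ref{lem:rec-PC} to $\gamma_r(m)$, the new shift $m'+e'$ depends on $j=r\bmod k$ through $e'=\lfloor(i+j)/k\rfloor$, so different summands in $F(m)$ feed into different target shifts; you cannot package the result as a scalar multiple of $F(m')$ plus Hadamard coefficients. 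The ``weighted sum $\frac{1}{k}\sum_{r'}h(r)h(r+q+e)$'' you mention is also not the object that can vanish, since the weight does not depend on $r'$.

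What the paper does instead is compute each $\gamma_r(m)$ individually (Lemma~\ref{lem:sat-gamma_r(i)}): one shows $\gamma_r(m)=a(r)a(r+m)$ when $0<m<k-j$ (no carry), and $\gamma_r(m)=0$ in every other case with $m\geq 1$. The vanishing in the carry cases comes from the recursion $\gamma_r(m)=\pm\frac{1}{k}\sum_{r'}\gamma_{r'}(m'+e')$ where $r'$ ranges over $k^{\ell-1}\Sigma_k+\lfloor r/k\rfloor$, i.e.\ over the \emph{leading} digit of $r'$; once $m'+e'$ has dropped into the no-carry range, the sum becomes $\sum_i a([iu j_0]_k)a([iu j_1]_k)$ with $j_0\neq j_1$, which is zero by Lemma~\ref{lem:sat-cancel} (this is column, not row, orthogonality of $M^{(u)}$ --- equivalent for Hadamard matrices, but it matters for the bookkeeping you flagged). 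Only after this pointwise computation does one sum over $r$: for $1\leq m<k$ one has $k^\ell\gamma(m)=\sum_{j<k-m}\sum_s\sum_i a(k^{\ell-1}i+ks+j)a(k^{\ell-1}i+ks+j+m)$, and the inner sum over $i$ vanishes by the same lemma. So \eqref{eq:saturated} is invoked twice, at two different levels, and the intermediate explicit form of $\gamma_r(m)$ is the missing ingredient your proposal skips over.
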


From this point onwards, assume that \eqref{eq:saturated} holds. Proceeding along similar lines as in Lemma \ref{lem:gamma-exists} (or Section \ref{ssec:SmallShifts}), we will compute $\gamma_r(m)$ for small values of $m \in \NN_0$ ($0 \leq r < k^\ell$).
The following lemma is the main consequence of \eqref{eq:saturated} that we use.

\begin{lemma}\label{lem:sat-cancel}
	Let $u \in \Sigma_k^{\ell-2}$ and $j_0,j_1 \in \Sigma_k$, $j_0 \neq j_1$. Then
	\begin{equation}\label{eq:765:1}
		\sum_{i = 0}^{k-1} a\bra{[iuj_0]_k}a\bra{[iuj_1]_k} = 0. 
	\end{equation}
\end{lemma}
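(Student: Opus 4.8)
The plan is to evaluate the sum in \eqref{eq:765:1} by writing each term $a\bra{[iuj]_k}$ in a way that isolates the dependence on the last two digit slots $i$ and $j$. Recall that $a = a_A$ where $A$ has no leading or trailing zeros, so by our conventions $\#(A, [iuj]_k)$ counts occurrences of patterns of $A$ in the word $iuj$ (padded with leading zeros if needed; since patterns have length $\leq \ell$ and $iuj$ has length $\ell$, only the word $\mathtt 0^{\text{few}} iuj$ matters). I would split the patterns of $A$ into those that are factors of $iuj$ crossing neither end (these depend on $u$ but not on $i$ or $j$ except through a fixed prefix/suffix contribution), and more usefully, observe that a pattern $v \in A$ of length exactly $\ell$ that is a factor of $iuj$ must equal $iuj$ itself, while a pattern of length $\ell-1$ that is a suffix is of the form $u'j$ with $u'$ a suffix of $iu$, etc. The cleanest route: note $[iuj]_k = k^{\ell-1}i + k[u]_k\text{-shift} + \dots$; more precisely define $m := [\mathtt 0 u j]_k$ (the value with a leading zero in the top slot) and $n := [iu]_k \cdot k$ adjusted — but I think the efficient bookkeeping is via the auxiliary quantity $\#(A(uj_b)^{-1}, \cdot)$.

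Concretely, I would argue: for fixed $u$ and $j$, as $i$ ranges over $\Sigma_k$, the only patterns in $A$ whose count in $iuj$ depends on $i$ are those of the form $iw$ where $w$ is a prefix of $uj$ of length $\ell-1$ (there is at most one such $w$, namely $w = uj$ when $\ell - 1 = \abs{uj}$... here $\abs{uj} = \ell-1$, so $w = uj$ and the pattern is $iuj$ of length $\ell$), together with — no other pattern of length $\leq \ell$ that starts at position $0$ of the length-$\ell$ word $iuj$ can depend on $i$ beyond that. Wait: a length-$\ell$ pattern starting at position $0$ is exactly $iuj$; a length-$(\ell-1)$ pattern could start at position $0$ (giving $iu$, independent of $j$) or position $1$ (giving $uj$, independent of $i$); shorter patterns near the front involve $i$ only through $i$ itself and a fixed prefix of $u$. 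So write $\#(A,[iuj]_k) = c(u,j) + \braif{iuj \in A} + (\text{terms involving }i\text{ but not }j)$. The terms involving $i$ but not $j$ combine with those involving $j$ but not $i$ and with $c(u,j)$; crucially $a\bra{[iuj_0]_k}a\bra{[iuj_1]_k} = (-1)^{\#(A,[iuj_0]_k)+\#(A,[iuj_1]_k)}$, and all $i$-only terms cancel in this product, leaving $a\bra{[iuj_0]_k}a\bra{[iuj_1]_k} = (-1)^{d(u,j_0,j_1)}\cdot(-1)^{\braif{iuj_0 \in A}+\braif{iuj_1 \in A}}$ for some constant $d(u,j_0,j_1)$ depending only on $u,j_0,j_1$. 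Hence
\[
\sum_{i=0}^{k-1} a\bra{[iuj_0]_k}a\bra{[iuj_1]_k} = (-1)^{d}\sum_{i=0}^{k-1}(-1)^{\braif{iuj_0 \in A}+\braif{iuj_1\in A}}.
\]

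The last sum is, in the notation of Remark \ref{rmk:saturated-large-k}, exactly $\sum_i M^{(u)}_{i,j_0}M^{(u)}_{i,j_1}$ (up to sign conventions matching $M_{i,j} = -1 \iff iuj \in A$), which is the $(j_0,j_1)$ entry of $M^{\mathrm T}M$. By \eqref{eq:saturated} — equivalently, by the Hadamard property recorded in Remark \ref{rmk:saturated-large-k} — this equals $k\,\delta_{j_0,j_1} = 0$ since $j_0 \neq j_1$. Alternatively, bypass the matrix language: $\abs{A(uj_0)^{-1} \oplus A(uj_1)^{-1}} = k/2$ means precisely that among the $k$ values of $i$, exactly $k/2$ satisfy $\braif{iuj_0 \in A} \neq \braif{iuj_1\in A}$, i.e. $(-1)^{\braif{iuj_0\in A}+\braif{iuj_1\in A}} = -1$ for half the $i$'s and $+1$ for the other half, so the sum vanishes. (One must also check $A(uj_b)^{-1} \subset \Sigma_k$, i.e. only single-letter prefixes occur, which holds because all patterns have length $\leq \ell = \abs{uj_b}+1$.)

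The main obstacle is the careful cancellation argument for the $i$-only terms: I need to verify that every pattern occurrence in the word $iuj$ that is sensitive to $i$ but not sensitive to the choice between $j_0$ and $j_1$ genuinely drops out of the product $a([iuj_0]_k)\,a([iuj_1]_k)$, including edge cases where $\ell = 2$ (so $u = \epsilon$) and the word is just $ij$, and cases where a pattern straddles and could in principle see both $i$ and $j$ — but such a straddling pattern has length $\geq \ell+1 > \ell$, so none exist, which is what makes the argument work. Writing out the decomposition $\#(A,[iuj]_k) = (\text{$i$-only}) + (\text{$j$-only}) + (\text{neither}) + \braif{iuj \in A}$ rigorously, using the padding convention from Section \ref{ssec:PatternCountingDefs}, is the one place requiring genuine care; everything after that is immediate from \eqref{eq:saturated}.
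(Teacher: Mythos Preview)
Your argument is correct and is essentially the same as the paper's: both isolate the contribution of the unique length-$\ell$ factor $iuj$ by showing that shorter patterns contribute identically to $\#(A,iuj_0)$ and $\#(A,iuj_1)$ modulo an $i$-independent constant, and then invoke \eqref{eq:saturated} to kill the remaining sum $\sum_i (-1)^{\braif{iuj_0\in A}+\braif{iuj_1\in A}}$. The only cosmetic difference is that the paper achieves the cancellation of short patterns by first multiplying through by $a([uj_0]_k)a([uj_1]_k)$ and using the identity $\#(v,uj_0)+\#(v,iuj_1)=\#(v,uj_1)+\#(v,iuj_0)$ for $\abs{v}<\ell$, whereas you decompose $\#(A,iuj)$ directly by occurrence position; these are two packagings of the same observation.
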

\begin{proof}
	Multiplying by $ a\bra{[uj_0]_k}a\bra{[uj_1]_k}$, we see that \eqref{eq:765:1} is equivalent to
	\begin{equation}\label{eq:765:1b}
		\sum_{i = 0}^{k-1} a\bra{[uj_0]_k}a\bra{[uj_1]_k} a\bra{[iuj_0]_k}a\bra{[iuj_1]_k}  = 0. 
	\end{equation}
	Each pattern $v$ in $A$ of length $< \ell$ and each $i \in \Sigma_k$, considering the different positions where $v$ can appear, one can check that 
	\[
		\#(v,uj_0) + \#(v,iuj_1) = \#(v,uj_1) + \#(v,iuj_0).
	\]
	Conversely, if $v \in A$ and $\abs{v} = \ell$ then 
	\[
		\#(v,uj_0) = \#(v,uj_1) = 0, \qquad 		
	\]
	since $\abs{uj_0},\abs{uj_1} < \ell$, and for each $i \in \Sigma_k$
	\[
	\#(v,iuj_0) + \#(v,iuj_1) =
	\begin{cases}
		1 &\text{ if } v \in \{iuj_0, iuj_1\},\\
		0 &\text{ otherwise.}
	\end{cases}
	\]
	Substituting the above identities into the sum on the left-hand side of \eqref{eq:765:1} and applying \eqref{eq:saturated} we conclude that
	\begin{align*}
			\sum_{i = 0}^{k-1} a\bra{[iuj_0]_k}a\bra{[iuj_1]_k} 
			&= \sum_{i = 0}^{k-1} (-1)^{\# \{iuj_0, iuj_1\} \cap A} 
			\\ & = k-2\abs{A(uj_0)^{-1} \oplus A(uj_1)^{-1}} =0.\qedhere
	\end{align*}
\end{proof}

\begin{lemma}\label{lem:sat-gamma_r(i)}
	Let $0 \leq r < k^\ell$ and $m \geq 0$. Put $j = r \bmod{k}$. Then
	\[
		\gamma_{r}(m) =
		\begin{cases}
a(r)a(r+m) & \text{ if } j+m < k \text{ and } m \neq 0, \\
		0 & \text{ otherwise.}
		\end{cases}
	\]
\end{lemma}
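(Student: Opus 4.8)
The plan is to prove this by induction on $m$, using the recursive relation from Lemma~\ref{lem:rec-PC} (specialized to the \sinv{} setting) together with the cancellation identity of Lemma~\ref{lem:sat-cancel}. The base case $m=0$ is immediate since $\gamma_r(0)=1=a(r)a(r)$, which is the ``otherwise'' branch as stated. For $m\geq 1$, I would split according to the residue $j = r\bmod k$, applying the recurrence
\[
	\gamma_r(m) = \frac{h(r)h(r+m)}{k}\sum_{r'} \gamma_{r'}(m'),\qquad m' = \floor{\tfrac{j+m}{k}},
\]
where $r'$ ranges over $k^{\ell-1}\Sigma_k + \floor{r/k}$ (this is the $e=0$, $q$-absent form of Lemma~\ref{lem:rec-PC}, or equivalently a direct adaptation of Lemma~\ref{lem:gammalog-recurrence} restricted to a residue class). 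I would distinguish two cases.

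\textbf{Case $j + m < k$ (so $m' = 0$).} Here $\gamma_{r'}(0)=1$ for every $r'$, so $\gamma_r(m) = h(r)h(r+m)$. The task is then to identify $h(r)h(r+m)$ with $a(r)a(r+m)$. By definition $h(n) = a(n)/a(\floor{n/k})$, so $h(r)h(r+m) = a(r)a(r+m)/\bra{a(\floor{r/k})a(\floor{(r+m)/k})}$; since $j+m<k$, we have $\floor{(r+m)/k} = \floor{r/k}$, whence the denominator is $a(\floor{r/k})^2 = 1$. Thus $\gamma_r(m) = a(r)a(r+m)$, as claimed (and when $m=0$ this just gives $1$, consistent with the ``otherwise'' branch).

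\textbf{Case $j + m \geq k$ (so $m' \geq 1$).} Now I want to show $\gamma_r(m) = 0$, and this is where Lemma~\ref{lem:sat-cancel} enters; I expect this to be the main obstacle. The idea is that the inductive hypothesis lets me replace each $\gamma_{r'}(m')$ appearing in the sum: writing $r' = k^{\ell-1}i + \floor{r/k}$ for $i\in\Sigma_k$, and noting $r'\bmod k = \floor{r/k}\bmod k$ is independent of $i$, the inductive value of $\gamma_{r'}(m')$ is either $0$ for all $i$ simultaneously (done) or equals $a(r')a(r'+m')$ for all $i$. In the latter subcase the sum becomes $\sum_{i=0}^{k-1} a(k^{\ell-1}i + s)a(k^{\ell-1}i + s + m')$ for the common value $s=\floor{r/k}$. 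I then need to recognize this as a sum of the form $\sum_i a([iuj_0]_k)a([iuj_1]_k)$ with $j_0\neq j_1$: the point is that $j+m\geq k$ forces the last digit of $r$ and the last digit of $r+m$ to differ (more precisely, $\floor{r/k}$ and $\floor{(r+m)/k}$ differ by exactly $m'\geq 1$ but their low-order structure aligns so that the relevant length-$(\ell-2)$ middle block $u$ is common), so Lemma~\ref{lem:sat-cancel} yields cancellation to $0$. The delicate bookkeeping is checking that $s$ and $s+m'$ really do have the shape $uj_0$, $uj_1$ with a common $u\in\Sigma_k^{\ell-2}$ and $j_0\neq j_1$; this requires tracking that $r < k^\ell$ bounds $s < k^{\ell-1}$ and that $m' $ is small enough relative to the carry structure that only the last digit changes. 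I would handle this by a careful case analysis on the digits of $r$, possibly distinguishing whether further carries propagate — if they do, one may need to iterate the recurrence once more or invoke the already-computed ``small shifts'' values from Section~\ref{ssec:SmallShifts}. Once the reduction to Lemma~\ref{lem:sat-cancel} is in place the vanishing is immediate.

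Finally I would double-check the boundary interaction with the range of $r$ (the case $r = k^\ell - 1$ and the role of $\gamma_{k^\ell}$ versus $\gamma_0$), using \eqref{eq:104:3} or the convention $\gamma_{k^\ell} = \gamma_0$ on $\NN$ as appropriate, to ensure the recurrence closes without referencing out-of-range indices.
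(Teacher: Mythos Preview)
Your strategy matches the paper's: apply the recurrence from Lemma~\ref{lem:rec-PC} and close off with Lemma~\ref{lem:sat-cancel}. The case $j+m<k$ is exactly the paper's Case~1. Your worry about ``delicate bookkeeping'' in the other case is misplaced: whenever the inductive hypothesis returns $\gamma_{r'}(m')=a(r')a(r'+m')$, the very condition $(r'\bmod k)+m'<k$ that triggers this branch already guarantees that $s=\lfloor r/k\rfloor$ and $s+m'$ agree in all but the last digit, so $r'=[iuj_0]_k$ and $r'+m'=[iuj_1]_k$ with a common $u\in\Sigma_k^{\ell-2}$ and $j_0\neq j_1$, and Lemma~\ref{lem:sat-cancel} applies without further work. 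For $m\geq 2$ one checks that $m'=\lfloor(j+m)/k\rfloor<m$, so your induction on $m$ is legitimate and in fact tidier than the paper's Case~3/4/5 split.

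The genuine gap is at $m=1$ with $j=k-1$: then $m'=\lfloor k/k\rfloor=1=m$, so the recurrence does not decrease $m$ and your inductive hypothesis is unavailable. ``Iterating once more'' does not fix this, because if $r$ ends in several copies of the digit $k-1$ the recurrence keeps returning $m'=1$ at each step. The paper's remedy (its Case~2) is a \emph{nested} induction on $\nu(r)$, the position of the first digit of $r$ that is not $k-1$: each application of the recurrence replaces $r$ by $r'$ with $\nu(r')=\nu(r)-1$, until $\nu=1$, where the previous case and Lemma~\ref{lem:sat-cancel} finish the job. The fixed point $r=k^\ell-1$ (where one of the $r'$ equals $r$ itself) is handled separately by noting that the recurrence then reads $\gamma_{k^\ell-1}(1)=\pm\tfrac{1}{k}\gamma_{k^\ell-1}(1)$, which forces the value to be $0$. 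You gesture toward the boundary $r=k^\ell-1$ and toward Section~\ref{ssec:SmallShifts}, but the missing ingredient is precisely this inner induction on $\nu(r)$.
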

\begin{proof}
Let us write $m = km'+i$ with $m' \geq 0$ and $i \in \Sigma_k$. Then by Lemma \ref{lem:rec-PC} (or, equivalently, by Lemma \ref{lem:gammalog-recurrence}) we have
	\begin{equation}\label{eq:691:1}
		\gamma_r(m) = \gamma_r(km'+i) = \frac{h(r)h(r+km'+i)}{k}\sum_{r'} \gamma_{r'}(m'+e'),
	\end{equation}
	where as usual $r' \in k^{\ell-1} \Sigma_k + \floor{r/k}$ and $e' = \floor{(i+j)/k} \in \{0,1\}$. We consider several different cases.
	
\textit{Case 0:} $m = 0$. It follows directly from the definition of $\gamma_r$ that 
\[\gamma_{r}(0) = 1 = a(r)^2 = a(r)a(r+m).\] 

\textit{Case 1:} $m \neq 0$ and $j+m < k$. Applying \eqref{eq:691:1} and noticing that $i = m$, $m'=0$, and $e' = 0$, we obtain
	\begin{equation}\label{eq:691:2}
		\gamma_r(m) = h(r)h(r+m) = a(r)a(r+m),
	\end{equation}
where the second equality holds because $\floor{r/k} = \floor{(r+m)/k}$.

In all of the remaining cases, we will show that $\gamma_r(m) = 0$. We start with the simplest situation where $e' = 1$.

\textit{Case 2:} $m=1$ and $j+m \geq k$, meaning that $j = k-1$. Let $\nu(r)$ denote the first position where a digit distinct from $k-1$ appears in the expansion of $r$, allowing $\nu(r) = \alpha$ if $r = k^\alpha-1$. By \eqref{eq:691:1},
	\begin{equation}\label{eq:691:3}
		\gamma_r(1) = \pm \frac{1}{k}\sum_{r'} \gamma_{r'}(1).
	\end{equation}
If $\nu(r) = 1$ then from the previously considered cases and Lemma \ref{lem:sat-cancel} it follows that
\[
		\gamma_r(1) = \pm \frac{1}{k}\sum_{r'} a(r')a(r'+1) = 0.
\]
If $1 < \nu(r) < \ell$ then $\nu(r') = \nu(r) - 1$ for all $r'$ that enter the sum \eqref{eq:691:3}. Hence, reasoning by induction on $\nu(r)$ we conclude that $\gamma_r(1) = 0$. Finally, if $\nu(r) = \ell$ then $r = k^\ell-1$, and $\nu(r') = \ell - 1$ for all $r'$ that appear in the sum \eqref{eq:691:3} except for $r' = r$. It follows that
	\[
	\gamma_{k^{\ell}-1}(1) = \pm \frac{1}{k} \gamma_{k^{\ell}-1}(1),
	\]
which is only possible if $\gamma_{k^{\ell}-1}(1) = 0$.

\textit{Case 3:} $2 \leq m < k$ and $j + m \geq k$.  By \eqref{eq:691:1} and Case 2,
	\begin{equation}\label{eq:691:31}
		\gamma_r(m) = \pm \frac{1}{k}\sum_{r'} \gamma_{r'}(1) = 0.
	\end{equation}

\textit{Case 4:} $k \leq m < k^2$.  By \eqref{eq:691:1},
	\begin{equation}\label{eq:691:32}
		\gamma_r(m) = \pm \frac{1}{k}\sum_{r'} \gamma_{r'}(m'+e') = 0.
	\end{equation}
	Let $j' := \floor{r/k} \bmod r$ and $i' := m'+e'$. Note that $r' \bmod{k} = j'$ for all $r'$ in the sum in \eqref{eq:691:32}, where we are using the fact that $\ell \geq 2$. We have several subcases to consider. If $j'+i' < k$ then
\[
		\gamma_r(m) = \pm \frac{1}{k}\sum_{r'} a(r')a(r'+i') = 0
\]
	by Cases 0 and 1 and Lemma \ref{lem:sat-cancel}. If $j'+i' \geq k$ while $i' < k$ (i.e.{} $m' \neq k-1$ or $e' \neq 1$) then $\gamma_{r'}(m'+e') = 0$ for all $r'$ by Cases 2 and 3, and consequently also $\gamma_r(m) = 0$. Finally, if $j'+i' = k$ (i.e.{} $m'=k-1$ and $e' = 1$) then 
\[
	\gamma_r(m) = \pm \frac{1}{k}\sum_{r'} \gamma_{r'}(k\cdot 1 + 0) = 0
\]
by the previously considered subcases.

\textit{Case 5:} $m \geq k^2$.  We reason by induction on $m$. By \eqref{eq:691:1} and the inductive assumption,
	\begin{equation}\label{eq:691:33}
		\gamma_r(m) = \pm \frac{1}{k}\sum_{r'} \gamma_{r'}(m'+e') = 0
	\end{equation}
	since $k \leq m'+e' < m$.
\end{proof}

Now that we have computed the values of the coefficients $\gamma_r(m)$, the remainder of the argument is straightforward.

\begin{proof}[Proof of Proposition \ref{prop:sat->NC}]
We need to show that 
\[
	\gamma(m) = \frac{1}{k^\ell}\sum_{r=0}^{k^\ell-1} \gamma_r(m) = 0
\] 
for all $m \geq 1$. If $m \geq k$ there is nothing to prove since $\gamma_r(m) = 0$. Suppose now that $1 \leq m <k$. We may write arbitrary $0 \leq r < k^\ell-1$ in the form $r = k^{\ell-1}i + ks + j$ where $i,j \in \Sigma_k$ and $0 \leq s < k^{\ell-2}$. Then, $\gamma_r(m) = 0$ if $j+m \geq k$ and $\gamma_r(m) = a(r)a(r+m)$ otherwise. It follows that
\[
	\gamma(m) = \sum_{j=0}^{k-m-1} \sum_{s = 0}^{k^{\ell-1}-1} \sum_{i=0}^{k-1} a\bra{k^{\ell-1}i + ks+j}a\bra{k^{\ell-1}i +ks+j+m} = 0,
\]
where the inner-most sum vanishes by Lemma \ref{lem:sat-cancel}.
\end{proof}	
	
\begin{remark}\label{rmk:final}
	Let $a' \colon \NN_0 \to \{+1,-1\}$ be a sequence such that $a'/a$ is $k^{\ell-1}$-periodic. Then $a'$ is \pc{} by Lemma \ref{lem:PC-representation}. Defining $\gamma'$ and $\gamma_r'$ in analogy to $\gamma$ and $\gamma_r$, with $a'$ in place of $a$, by a direct computation we show for all $m \geq 0$ and $0 \leq r < k^\ell$ that 
\begin{equation}\label{eq:440:1}
		\gamma_r'(m) = \frac{a'(r)a'(r+m)}{a(r)a(r+m)} \gamma_r(m) = \pm \gamma_r(m).
\end{equation}
It follows that $\gamma_r'(m) = 0$ for all $m \geq k$. In particular, $\gamma'(m) = 0$ for all $m \geq k$.

We check by exhaustive search that all noncorrelated binary \pc{} sequences of length $\leq 4$ can arise as $a'$ in the construction outlined above. It seems plausible that the same holds for all lengths. If this is the case, and if Conjecture \ref{conj:NC+inv->sat} holds true, then the task of verifying if a given binary \pc{} sequence $b'$ is noncorrelated can be split into two independent steps: First, check if the \sinv{} sequence $b$ obtained from $b'$ in Lemma \ref{lem:PC-rep-invariant} satisfies \eqref{eq:saturated}; if not then $b'$ is not noncorrelated\footnote{For the sake of simplicity, we work under the additional assumption that $b$ and $b'$ have equal lengths, which is not true in general.}. Second, check if the $\pm$ signs in (the analogue of) \eqref{eq:440:1} align in a way that ensures $\gamma_{b'}(1) = 0$. While the condition from the first step is quite conceptual, it appears that the second step relies mostly on arithmetic coincidence. This would provide an intuitive explanation for why the results in the \sinv{} case are considerably more concise.

\end{remark}

\bibliographystyle{alphaabbr}
\bibliography{bibliography}

\end{document}